\newtheorem{tw}{Theorem}[section]
\newtheorem{lm}[tw]{Lemma}
\newtheorem{wn}[tw]{Corollary}
\newtheorem{pr}[tw]{Proposition}
\theoremstyle{definition}
\newtheorem{df}{Definition}[section]
\newtheorem{uw}[tw]{Remark}
\newcommand{\R}{\mathbb{R}}
\newcommand{\Z}{\mathbb{Z}}
\newcommand{\N}{\mathbb{N}}
\newcommand{\T}{\mathbb{T}}
\newcommand{\cQ}{\mathcal{Q}}
\newcommand{\Q}{\mathbb{Q}}
\newcommand{\cB}{\mathcal{B}}
\newcommand{\cC}{\mathcal{C}}
\newcommand{\cP}{\mathcal{P}}
\newcommand{\bea}{\begin{eqnarray}}
  \newcommand{\eea}{\end{eqnarray}}
  \newcommand{\beab}{\begin{eqnarray*}}
  \newcommand{\eeab}{\end{eqnarray*}}
\renewcommand{\a}{\alpha}
  \newcommand{\be}{\begin{equation}}
  \newcommand{\ee}{\end{equation}}
\providecommand{\noopsort}[1]{} 
\title{Slow entropy for some smooth flows on surfaces}
\author{Adam Kanigowski}
\begin{document}
\baselineskip=14pt \maketitle

\begin{abstract}We study slow entropy in some classes of smooth mixing flows on surfaces. The flows we study can be represented as special flows over irrational rotations and under roof functions which are $C^2$ everywhere except one point (singularity). If the singularity is logarithmic asymmetric (Arnol'd flows) we show that in the scale $a_n(t)=n(\log n)^t$ slow entropy equals $1$ (the speed of orbit growth is $n\log n$) for a.e. irrational $\a$. If the singularity is of power type ($x^{-\gamma}, \gamma\in (0,1)$) (Kochergin flows) we show that in the scale $a_n(t)=n^t$ slow entropy equals $1+\gamma$ for a.e. $\a$.\\
We show moreover that for local rank one flows, slow entropy equals $0$ in the $n(\log n)^t$ scale and is at most $1$ for scale $n^t$. As a consequence we get that a.e. Arnol'd and a.e Kochergin flow is never of local rank one. 
\end{abstract}
\tableofcontents

\section{Introduction}

\indent Smooth flows on surfaces\footnote{By a surface we always mean a compact connected orientable  $2$ dimensional manifold without boundary.} stand as one of the main class of study in dynamical systems. Dimension $2$ is the lowest in which we can observe some non-trivial ergodic and spectral properties, i.e. weak-mixing, mixing, decay of correlation. Indeed, in dimension $1$ every smooth flow is conjugated to a linear flow (which has discrete spectrum). Smooth surface flow has entropy $0$. This is a consequence Pesin formula \cite{pesin}. One of the central ergodic features describing chaoticity of the system (in the $0$ entropy case) is mixing. If a smooth surface flow has no fixed points, then by the Lefschetz formula the surface is a two-dimensional torus and the flow is a smooth time-change of a linear flow. In this case mixing never holds \cite{katokrig} (although weak-mixing holds for some smooth time changes with Liouvillean frequencies, \cite{sklover}). 

Therefore if one wants to obtain mixing examples in the class of smooth flows on surfaces one needs to consider flows with fixed points. Such examples where first shown to exist by Kochergin \cite{kocherginmixing} provided the existsence of a degenerate fixed point (Kochergin flows). If all fixed points are non-degenerate, and there are saddle connections, mixing was shown to hold in the ergodic component of the flow by Khanin and Sinai, \cite{sinaikhanin} if the transformation on the Poincare section is an irrational rotation (Arnol'd flows), for a full measure set of frequencies, and by Ulcigrai, \cite{ulcigraimixing}, if the transformation is an IET (for a full measure set of IET's). If all fixed points are non-degenerate and there are no saddle connections, then the flow is typically not mixing, \cite{lemanczykabsence,ulcigraiabsence}. However, Chaika and Wright \cite{ChW} showed the existence of a mixing flow with no saddle connections and non-degenerate fixed points on a genus $5$ surface. Recently, Fayad and the author \cite{FK} showed that in genus $1$ almost every Arnol'd flow and some Kochergin flows are mixing of all orders. This result was strenghtened in \cite{kku} to almost every Arnol'd flow in any genus.  
Moreover, Fayad, Forni and the author, \cite{FFK} ,showed that some Kochergin flows (with high order of degeneracy of the saddle) on the two torus have Lebesgue spectrum (which indicates stronger chaoticity of the system than mixing).

Smooth flows with singularities are believed to be systems of intermediate (polynomial growth). Indeed, on the one hand they have zero entropy, but on the other hand the presence of a fixed point produces stretching which results in fast divergence of nearby orbits when they get near the singularity. It is natural, that the orbit growth should depend on the order of degeneracy of the saddle. A very useful tool in making the notion of polynomial (or superlinear) growth precise is { \em slow entropy} introduced by Katok and Thouvenot in \cite{Kat-Tho}. It allows to make the (polynomial) orbit growth an isomorphism invariant (see Section \ref{sec.def}).
 For $0<\beta\leq 1$, we will also introduce the notion of $\beta$-slow entropy (see Section \ref{sec.def}) which measures the orbit growth on a portion of space of measure $=\beta$ (we denote the $\beta$-slow entropy by  $h_s^\beta$). The $\beta$ slow entropy is useful when dealing with systems of {\em local rank one}.
 To define slow entropy (or $\beta$-slow entropy) one needs a scale (which should be the expected orbit growth in the system).  The two scales that we use are $a_n(t)=n(\log n)^t$ and $a_n(t)=n^t$. As we will explain below, the first one works for Arnol'd flows and the second one for Kochergin flows. Before we state our main theorems, we need to specify the flows we will deal with.

Smooth flows that we will consider have representations as special flows over irrational rotation of the circle and under the roof function $f\in C^2(\T\setminus\{0\})$ which satisfies 
\begin{equation}\label{asu}\lim_{x\to 0^+}\frac{f(x)}{h(x)}=A_1\;\; \text{and}\;\; \lim_{x\to 0^-}\frac{f(x)}{h(1-x)}=B_1, \text{ where } A_1,B_1>0;
\end{equation}

 \begin{equation}\label{asu2}\lim_{x\to 0^+}\frac{f'(x)}{h'(x)}=-A_2\;\; \text{and}\;\; \lim_{x\to 0^-}\frac{f'(x)}{h'(1-x)}=B_2,\text{where} A_2,B_2>0;
\end{equation}

\begin{equation}\label{asu3}\lim_{x\to 0^+}\frac{f''(x)}{h''(x)}=A_3\;\; \text{and}\;\; \lim_{x\to 0^-}\frac{f''(x)}{h''(1-x)}=B_3,\text{where} A_3,B_3>0;
\end{equation}
where $h$ is
\begin{enumerate}
    \item  $-\log x$, then we assume additionally that $A_i+B_i\neq 0$, $i=1,2,3$ and the corresponding special flow $(T_t^f)$ is called an {\em Arnol'd flow}.
	 \item $x^{-\gamma}, 0<\gamma<1$, then the special flow $(T_t^f)=(T_t^{f,\gamma})$ is called a {\em Kochergin flow}. 
\end{enumerate}
We will now describe the full measure sets for which we can prove our theorems.
Let 
$$
\mathcal{D}:=\{\a\in\R\setminus\Q\;:\;q_{n+1}\leq C(\a)q_n\log q_n(\log n)^2\}. 
$$
It follows from Khinchin theorem, \cite{Khi}, that $\lambda(\mathcal{D})=1.$ 
For $\a\in \R\setminus\Q$, let $K_\a:=\{n\;:\;q_{n+1}\leq q_n\log^{7/8}q_n\}$ and define
$$
\mathcal{E}:=\{\a\in\R\setminus\Q\;:\;\sum_{i\notin K_\a} \log^{-7/8}q_i<+\infty \}.
$$
It is shown in \cite{FK} that $\lambda(\mathcal{E})=1$. 
With the above definitions our main theorems are the following (see Section \ref{sec.def} for the precise definition of $h_s^\beta$):
\begin{tw}\label{main} Let $\beta\in (0,1]$ and  $a_n(t)=n(\log n)^t$. Then for every $\alpha\in \mathcal{D}\cap \mathcal{E}$ and the corresponding Arnol'd flow $(T_t^f)$, we have $h_s^\beta(T_t^f)=1$.
\end{tw}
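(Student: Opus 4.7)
The plan is to establish matching upper and lower bounds on $h_s^\beta(T_t^f)$ in the scale $a_n(t) = n(\log n)^t$. By monotonicity of $\beta$-slow entropy in $\beta$, the upper bound needs to be proved only for $\beta = 1$, while the lower bound must be established for arbitrary $\beta \in (0,1]$, so the small-$\beta$ regime is the delicate one. The underlying heuristic is that, for the time-$n$ Bowen metric, the logarithmic shearing produced by the asymmetric singularity inflates orbit complexity by exactly a factor $\log n$ relative to the base rotation.

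For the upper bound $h_s^\beta \le 1$, I work in special-flow coordinates $(x,s) \in \T \times [0,f(x))$ and linearize the fiber separation of nearby orbits. If the time-$n$ orbit makes $N \sim n/\int f$ returns to the base, then at every return $k \le N$ the fiber displacement of $(x,s)$ from $(x',s')$ is, to first order, $(s-s') + S_k(f')(x)(x - x')$. Consequently an $(n,\varepsilon)$-Bowen ball around a typical point is a parallelogram of base width $\varepsilon/\max_{k\le N}|S_k f'(x)|$ and fiber width $\varepsilon$. Combining the logarithmic singularity with $\a \in \mathcal{D}$ gives a Denjoy--Koksma type estimate $\max_{k\le N}|S_k f'(x)| = O(N \log N)$ off an exceptional set of arbitrarily small measure, yielding a cover of cardinality $O(n\log n/\varepsilon^2)$. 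This is $o(a_n(t))$ for every $t > 1$.

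For the lower bound $h_s^\beta \ge 1$, the content is to rule out fortuitously small Bowen balls on a set of measure at least $\beta$. Here I invoke the shearing mechanism encoded in the set $\mathcal{E}$ from \cite{FK}: along a positive-density sequence of indices $i \notin K_\a$, the Birkhoff sum $S_{q_i}(f')$ is of the expected order $q_i \log q_i$, with a definite sign, on a definite fraction of $\T$, not merely bounded above by it. A Borel--Cantelli argument based on $\sum_{i \notin K_\a}\log^{-7/8}q_i < \infty$ then produces a set $G$ of measure arbitrarily close to $1$ (in particular $\ge \beta$) on which $\max_{k\le N}|S_k f'| \gtrsim N\log N$. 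On $G$ each Bowen ball has base width $\lesssim \varepsilon/(n\log n)$, so covering $G$ requires $\gtrsim n\log n/\varepsilon^2$ Bowen balls, which exceeds $a_n(t)$ for any $t < 1$.

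The main obstacle is the coordinated quantitative control of exceptional sets uniformly in $n$. Points whose orbit approaches the singularity anomalously closely within time $n$ produce shears far larger than $n\log n$ and must be discarded for the upper bound; symmetrically, for the lower bound one must exclude points where $S_N f'$ fails to attain the correct order, and the exclusion must be effective simultaneously at every scale $n$ and $\varepsilon$, with total mass loss $< 1 - \beta$. The combined Diophantine hypotheses $\a \in \mathcal{D} \cap \mathcal{E}$ are tailored precisely for this: the polynomial-logarithmic bound $q_{n+1}\le C(\a) q_n \log q_n(\log n)^2$ controls the density of the bad times near the singularity, while the summability condition defining $\mathcal{E}$ ensures that the shearing heuristic is realized on a full-measure subset. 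Reconciling these two parts of the estimate so that the answer is genuinely uniform in $\beta \in (0,1]$ is the technical heart of the proof.
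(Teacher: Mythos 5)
Your upper bound argument is in the right spirit and matches the paper's Proposition \ref{nr1} (bound the number of Hamming balls by the number of Bowen balls, then count Bowen balls via Denjoy--Koksma control of $\max_{k \le N} |S_k f'|$). The lower bound, however, contains a genuine gap that the paper's introduction explicitly warns about.

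You argue: each Bowen ball has base width $\lesssim \varepsilon/(n\log n)$ on a good set $G$, so covering $G$ requires $\gtrsim n\log n/\varepsilon^2$ Bowen balls, hence $S^r_\cP(\varepsilon,\beta) \gtrsim n\log n$. But $S^r_\cP(\varepsilon,\beta)$ counts \emph{Hamming} balls, not Bowen balls, and the inclusion goes the wrong way for a lower bound: a Bowen ball of radius $\varepsilon$ is \emph{contained} in the Hamming ball of radius $\varepsilon$, so Hamming balls are a priori much larger, and showing that many Bowen balls are needed gives you nothing. Two points can be Hamming-close (agreeing on $1-\varepsilon$ of the time interval) while being Bowen-far --- they are allowed to split apart badly for an $\varepsilon$-fraction of the time. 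The paper emphasizes exactly this: the lower bound is the hard part because one must prove a ``variational-principle'' type statement that statistical (Hamming) complexity matches topological (Bowen) complexity, which in general is \emph{false}.

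The paper closes this gap with the PD-property (Section \ref{sec.par}) and its consequence, Proposition \ref{db}: if the flow has uniform parabolic divergence (any two nearby orbits spend a definite proportion $c_1$ of the interval $I_{x,y}$ at $d$-distance $\ge c_0$ before they separate), then Hamming closeness $d^{\cQ_n}_R(x,y) < \min(c_0^2,c_1^2)$ forces the existence of a subinterval $[A,B] \subset [0,R]$ of length $\ge c_1 R/10$ on which $d(T_tx,T_ty) < 10^{-2}$. This converts a Hamming ball into something Bowen-like on a substantial fraction of $[0,R]$, at which point the shearing estimates you describe can be applied. Proposition \ref{nr2} then verifies PD for a.e.\ Arnol'd flow, which uses both $\mathcal{D}$ and the more delicate set $\mathcal{E}$ (via the dichotomy between $n+i \in K_\alpha$ and $n+i \notin K_\alpha$, and Lemma \ref{goodcon} to get two-sided linear growth of $|f^{(r)}(x)-f^{(r)}(y)|$). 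Your Borel--Cantelli argument on $\mathcal{E}$ captures only the upper/lower order of the Birkhoff sum $S_N f'$; what is missing is the \emph{uniform} divergence mechanism on intervals $I_{x,y}$ (Ratner-type control) that lets you pass from Hamming separation to Bowen separation. Without that, the lower bound does not go through.
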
 
Second main theorem deals with Kochergin flows.

\begin{tw}\label{main2} Let $\beta\in (0,1]$ and  $a_n(t)=n^t$. Then for every $\alpha\in \mathcal{D}$ and every $\gamma\in(0,1)$ for the corresponding Kochergin flow $(T_t^{f,\gamma})$, we have $h_s^\beta(T_t^{f,\gamma})=1+\gamma$.
\end{tw}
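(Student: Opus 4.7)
The plan is to establish matching upper and lower bounds on $h_s^\beta(T_t^{f,\gamma})$, both reducing to sharp two-sided estimates on the maximal Birkhoff sum $\max_{k\leq n}|S_k f'|$ of the derivative of the roof function.

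For the \textbf{upper bound} $h_s^\beta\leq 1+\gamma$, fix $t>1+\gamma$ and $\vep>0$ and set $n=\lfloor T/\int_\T f\rfloor$. In the special flow representation, two points $(x,s),(y,s)$ of the same fibre coordinate remain within distance $\vep$ up to time $T$ whenever
$$
|x-y|\cdot \max_{0\leq k\leq n}|S_k f'(x)|\,\lesssim\, \vep,
\qquad
S_k f'(x)=\sum_{j=0}^{k-1} f'(x+j\a).
$$
For $\a\in\mathcal D$, Denjoy--Koksma estimates for $f'$ with a shrinking neighbourhood of the singularity excised produce a set $G_n\subset\T$ with $\lambda(G_n)>1-(1-\beta)/2$ on which $\max_k|S_kf'(x)|\lesssim n^{1+\gamma}(\log n)^C$. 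Partitioning $G_n$ into arcs of length $\delta\asymp \vep\, n^{-(1+\gamma)}(\log n)^{-C}$ and coupling with vertical intervals of size $\vep$ produces $O(T^{1+\gamma}(\log T)^{C'})$ Bowen $(T,\vep)$-balls covering a set of measure $\geq \beta$; since $T^{1+\gamma}(\log T)^{C'}=o(T^t)$ for $t>1+\gamma$, this gives the desired bound.

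For the \textbf{lower bound} $h_s^\beta\geq 1+\gamma$, fix $t<1+\gamma$; I must show that every family of $(T,\vep)$-Bowen balls covering a set of measure $\geq\beta$ has cardinality $\gtrsim T^{1+\gamma}$. By the three-distance theorem together with $\a\in\mathcal D$, for every $x\in\T$ the orbit $\{x+k\a\}_{0\leq k<n}$ contains a point within distance $\lesssim (\log n)^C/n$ of $0$, whence $\max_k|f'(x+k\a)|\gtrsim n^{1+\gamma}(\log n)^{-C(1+\gamma)}$. A short stopping-time argument using the detailed asymptotics~\eqref{asu2}--\eqref{asu3} shows that the closest visit beats the next-closest visits by a uniform multiplicative factor, preventing cancellation and yielding $\max_k|S_kf'(x)|\gtrsim n^{1+\gamma}/(\log n)^{C'}$ on a set of measure tending to $1$. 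Each Bowen ball then has base-diameter $\lesssim \vep(\log n)^{C'}/n^{1+\gamma}$ and measure $\lesssim \vep^2(\log n)^{C'}/n^{1+\gamma}$, forcing any cover of measure $\geq\beta$ to contain $\gtrsim \beta n^{1+\gamma}/(\vep^2(\log n)^{C'})\gg T^t$ members.

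The \textbf{main obstacle} is the simultaneous two-sided control of $\max_{k\leq n}|S_kf'|$. The Diophantine condition $q_{n+1}\leq C(\a) q_n\log q_n(\log n)^2$ defining $\mathcal D$ is precisely calibrated so that the orbit does not concentrate pathologically near the singularity over long windows, which is what furnishes the upper estimate used in the covering step. Conversely, the lower estimate needs the contribution of the single closest visit to survive potential cancellation from visits on the opposite side of the singularity, which is where~\eqref{asu2}--\eqref{asu3} enter (in particular, the hypotheses $A_i,B_i>0$ combined with the second-order data pin down the sign structure of $f'$ near $0$). Both ingredients are closely related to estimates developed in~\cite{FK} for mixing of Kochergin flows, and I would recycle and sharpen those estimates to apply to the maximal partial sum $\max_{k\leq n}|S_kf'(x)|$ rather than to $S_nf(x)$ itself.
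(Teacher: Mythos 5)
Your upper bound is essentially the paper's argument: you cover a set of measure $>\beta$ by small base intervals coupled with vertical strips, use Denjoy--Koksma to show that such small rectangles stay $\vep$-close in $\rho^f$ (and hence in the coding metric) for all of $[0,T]$, and count $\sim T^{1+\gamma}(\log T)^{C'}$ of them. Since Bowen balls are contained in Hamming balls, this bounds the number $S^T_{\cP}(\vep,\beta)$ of Hamming balls from above and gives $h_s^\beta\leq 1+\gamma$ for any $\alpha\in\mathcal D$. This is exactly Proposition~\ref{nr3}.

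The lower bound, however, has a genuine gap. You write: \emph{``fix $t<1+\gamma$; I must show that every family of $(T,\vep)$-Bowen balls covering a set of measure $\geq\beta$ has cardinality $\gtrsim T^{1+\gamma}$''} and then estimate the measure of a Bowen ball. But the quantity $S^T_\cP(\vep,\beta)$ in the definition of slow entropy is the minimal number of \emph{Hamming} balls (for the pseudo-metric $d_T$ of \eqref{bal1}) needed to cover measure $\beta-\vep$, and Hamming balls are \emph{larger} than Bowen balls. A lower bound on the number of Bowen balls required says nothing about the number of Hamming balls required --- it could be that the $T^{1+\gamma}$ tiny Bowen balls are reshuffled inside a much smaller family of fat Hamming balls. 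Indeed this is exactly what the paper flags in the introduction as the ``variational principle'' difficulty: one must prove that the \emph{statistical} orbit growth in the Hamming metric matches the \emph{topological} orbit growth in the Bowen metric, and this is not automatic (and in general false, see~\cite{Park}).

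What the paper actually proves (Propositions~\ref{nr5}--\ref{techn1}) is the much stronger assertion: if $\bar d^{\cP_m}_R(x,y)<1/100$ (a Hamming-closeness hypothesis, i.e.\ the coded orbits are allowed to differ on up to a $1\%$ proportion of $[0,R]$), then there exists some $t_0\in[0,R]$ at which the two flowed points are simultaneously in $K_m$ and have first coordinates within $\log^{20}R/R^{1+\gamma}$. Only after this is one allowed to propagate backward to time $0$ and conclude that the Hamming ball around $x$ has measure $\lesssim\log^{51}R/R^{1+\gamma}$. The proof of this key step is the KRV-style counting with the sets $A_j^{R,m}(x,y)$: one decomposes times by the dyadic scale of the base-separation $d_1(T^f_t x,T^f_t y)$, shows via a measure-theoretic argument (Proposition~\ref{mpr}, building on the lower bound for $|f'^{(n)}|$ that you correctly anticipate) that only a $1/j^2$-fraction of time can fall in the $j$-th annulus, and concludes by pigeonhole that some deep annulus $2^{j}\gtrsim R^{1+\gamma}/\log^{15}R$ must be hit. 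None of this is present in your proposal: your ``stopping-time argument'' estimates $\max_k|S_k f'|$ from below, which is a needed ingredient (it feeds into Lemma~\ref{sub2}), but it is applied to the wrong object --- it only controls Bowen balls and leaves the Hamming-vs-Bowen gap untouched.
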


For Theorem \ref{main} the diophantine condition on $\alpha$ is crucial. In Remark \ref{rem.liou} we explain why this is not true for Liouvillean irrationals.


The following proposition gives an upper bound on orbit growth for local rank one flows (see Section \ref{def.rank} for the definition of local rank one flow).
\begin{pr}\label{ranon} Let $g(n)$ be any sequence of positive numbers such that\\
$\lim_{n\to+\infty} g(n)=+\infty$. Let $a_n(t)=n(g(n))^t$ and $(T_t)$ be a $\beta$- rank one flow for some $\beta\in (0,1]$. Then $h_s^\beta(T_t)=0$.
\end{pr}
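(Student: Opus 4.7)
The plan is to exploit the $\beta$-rank one structure to produce, for each $\varepsilon>0$, a covering of a set of measure $\geq\beta-O(\sqrt\varepsilon)$ by a number of balls (in the distance underlying $h_s^\beta$) bounded independently of time, evaluated along a subsequence $N_n\to\infty$. Since any bounded quantity is $o(N(g(N))^t)$ for every $t>0$, this forces $h_s^\beta(T_t)=0$ in the scale $a_n(t)=n(g(n))^t$.

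First I would fix a finite partition $\mathcal{P}$ and $\varepsilon>0$ and invoke the $\beta$-rank one hypothesis to obtain a Rokhlin tower $(B_n,h_n)$ with $h_n\to\infty$, total tower measure $\geq\beta-\varepsilon$, and $\mathcal{P}$ $\varepsilon$-approximated by unions of tower levels. Setting $N_n:=\varepsilon h_n\to\infty$, the set of points at levels in $[0,h_n-N_n]$ has measure $\geq(1-\varepsilon)(\beta-\varepsilon)\geq\beta-2\varepsilon$ and its $N_n$-orbits remain inside the tower. I would then subdivide these starting levels into $K=\lceil 1/\varepsilon^2\rceil$ intervals of length $\varepsilon N_n$, pick a representative $s_k$ in each and a fixed base point $b_n^*\in B_n$, and take $\{T_{s_k+t}b_n^*:t\in[0,N_n]\}$ as the $k$-th reference orbit.

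The heart of the argument is to show that for most $T_sb$ with $s\in[s_k,s_{k+1}]$ and $b\in B_n$, the orbit of length $N_n$ lies within $O(\sqrt\varepsilon)$ of the $k$-th reference. Two discrepancies have to be controlled: replacing $b$ by $b_n^*$ is absorbed by the rank-one approximation, since at each time both points sit in the same tower level and hence in the same $\mathcal{P}$-atom apart from a set of time-averaged measure $\leq\varepsilon$ (this is a Fubini plus Markov computation); replacing $s$ by $s_k$ is a time shift of magnitude $\leq\varepsilon N_n$, handled either by reindexing in a Feldman--Katok-type distance or by uniform continuity of the flow on the compact phase space in a Bowen-type distance. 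After discarding an exceptional set of measure $O(\sqrt\varepsilon)$, this yields
\[
S(N_n,\mathcal{P},C\sqrt\varepsilon,\beta-O(\sqrt\varepsilon))\leq K=O(1/\varepsilon^2)
\]
uniformly in $n$.

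For any $t>0$ we then obtain
\[
\frac{S(N_n,\mathcal{P},C\sqrt\varepsilon,\beta-O(\sqrt\varepsilon))}{a_{N_n}(t)}\;\leq\;\frac{O(1/\varepsilon^2)}{N_n(g(N_n))^t}\;\longrightarrow\;0,
\]
and letting $\varepsilon\downarrow 0$ with $\mathcal{P}$ ranging over a refining sequence gives $h_s^\beta(T_t)=0$. I expect the main obstacle to be the closeness estimate in the third paragraph: the $b$-side is routine from the rank-one approximation, but the $s$-side depends delicately on the precise distance used in the definition of $h_s^\beta$ from Section~\ref{sec.def}. The reindexing trick is immediate for a Feldman--Katok-type distance, whereas a Bowen-type distance requires exploiting equicontinuity of $(T_t)$ on the ambient compact manifold (possibly after replacing the spacing $\varepsilon N_n$ by a time-independent modulus of continuity, which still leaves a cover count independent of $N_n$ and hence the same conclusion).
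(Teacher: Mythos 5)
Your overall framework — cover a $\beta$-fraction of the space by Rokhlin-tower blocks, control the base-point variation using the rank-one monochromaticity, and control the time-shift variation separately — is essentially the paper's decomposition (Lemmas~\ref{eva} and~\ref{mesflow} respectively). However, the specific way you handle the time shift has a real gap, and it propagates into a false counting claim.

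You subdivide the tower into $K=\lceil 1/\varepsilon^2\rceil$ blocks of width $\varepsilon N_n$ and want two starting levels $s,s'$ in the same block, hence $|s-s'|\leq\varepsilon N_n$, to yield Hamming-close $\cP,N_n$-names. But $\varepsilon N_n=\varepsilon^2 h_n\to\infty$, and the Hamming pseudo-metric $d_r$ in Section~\ref{sec.def} is the static one (no reindexing). A time shift of unboundedly large magnitude does not keep $d_{N_n}$ small: for a typical (say mixing) flow, the names of $y$ and $T_{\varepsilon N_n}y$ are asymptotically decorrelated and $d_{N_n}(y,T_{\varepsilon N_n}y)$ stays bounded away from zero. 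Neither reindexing (not allowed by the definition here) nor equicontinuity of $(T_t)$ helps, because equicontinuity only controls shifts of size bounded by a fixed $\delta$, and Proposition~\ref{ranon} is stated for abstract $\beta$-rank one flows on Borel probability spaces where no metric/compact structure is assumed anyway. Your fallback — ``replace $\varepsilon N_n$ by a time-independent modulus and the cover count is still independent of $N_n$'' — does not follow: if the block width is a fixed $t_0(\varepsilon)$, you get $\sim h_n/t_0$ blocks, which grows linearly with $N_n$.

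The correct conclusion still holds, but for a different reason than you assert. The paper proves precisely the linear bound $S^{\varepsilon^2 H_n}_\cP(\varepsilon,\beta) < H_n/t_0(\varepsilon)$ (Lemma~\ref{nub}), where $t_0$ is obtained by a measurable-flow argument, not by manifold equicontinuity: pick $t_0$ so small that $\sum_i\mu(T_tP_i\triangle P_i)<\varepsilon^4$ for $t\leq t_0$, then a Fubini/Markov argument over the tower gives a large set of $(y,t)\in B_n\times[0,t_0]$ with $d_{H_n}(y,T_ty)<\varepsilon$ (Lemma~\ref{mesflow}). The linear cover count is then enough because $a_n(t)=n(g(n))^t$ is superlinear: $S^{r_n}_\cP(\varepsilon,\beta)/a([r_n],t)=O\bigl(1/(g(r_n))^t\bigr)\to 0$ for every $t>0$. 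So you should abandon the claim of a cover count bounded independently of $n$ — it is both false and unnecessary — and replace the time-shift control by the fixed-$t_0$, measurable-flow version.
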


Theorems \ref{main}, \ref{main2} and Proposition \ref{ranon} have the following consequence:
\begin{wn}\label{nro}
Every Arnol'd flow with the frequency in $\mathcal{D}\cap \mathcal{E}$ is not of local rank one. Every Kochergin flow with the frequency in $\mathcal{D}$ is not of local rank one. Moreover any two Kochergin flows with frequencies in $\mathcal{D}$ and different exponents ($\gamma\neq \gamma'$) are not isomorphic. 
\end{wn}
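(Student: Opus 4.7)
The plan is to derive all three conclusions of Corollary~\ref{nro} by combining the upper bound of Proposition~\ref{ranon} on the slow entropy of local rank one flows with the lower bounds of Theorems~\ref{main}--\ref{main2}, invoking that $h_s^\beta$ computed in a fixed scale is a measure-theoretic isomorphism invariant (which is the entire motivation for the definition introduced in \cite{Kat-Tho}). All three parts are obtained by contradiction.

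For the first two assertions I would fix $\beta\in(0,1]$ and, for each class of flows, apply Proposition~\ref{ranon} with an appropriately chosen $g$. In the Arnol'd case, fix $\alpha\in\mathcal{D}\cap\mathcal{E}$ and suppose the flow $(T_t^f)$ is $\beta$-rank one. Applying Proposition~\ref{ranon} with $g(n)=\log n$ gives $h_s^\beta(T_t^f)=0$ in the scale $n(\log n)^t$, which directly contradicts the value $1$ supplied by Theorem~\ref{main} in the same scale. For the Kochergin case the relevant scale $a_n(t)=n^t$ is not literally of the form $n(g(n))^t$, so a short comparison is needed: for any $\beta$-rank one flow, apply Proposition~\ref{ranon} with $g(n)=n^\varepsilon$ for each $\varepsilon>0$. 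This forces the orbit count witnessing the slow entropy to be $o(n^{1+\varepsilon t})$ for all $\varepsilon,t>0$, hence $o(n^s)$ for every $s>1$. Therefore $h_s^\beta\le 1$ in scale $n^t$ for any $\beta$-rank one flow, which contradicts $h_s^\beta(T_t^{f,\gamma})=1+\gamma>1$ delivered by Theorem~\ref{main2} for every $\alpha\in\mathcal{D}$.

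For the last assertion, if two Kochergin flows with frequencies in $\mathcal{D}$ and exponents $\gamma\neq\gamma'$ were measurably isomorphic, then since $h_s^\beta$ in the fixed scale $n^t$ is an isomorphism invariant, Theorem~\ref{main2} would force $1+\gamma=1+\gamma'$, contrary to $\gamma\neq\gamma'$. I do not anticipate any genuine obstacle here: the corollary is a formal consequence of its three inputs, and the only mildly non-trivial point is the scale comparison $n(g(n))^t\to n^t$ effected by taking $g(n)=n^\varepsilon$ with $\varepsilon$ arbitrarily small. All the substantive work lies in the proofs of Proposition~\ref{ranon} and Theorems~\ref{main}--\ref{main2} themselves.
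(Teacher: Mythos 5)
Your argument is correct and is essentially the intended one: the paper states Corollary~\ref{nro} as an immediate consequence of Theorems~\ref{main}, \ref{main2}, Proposition~\ref{ranon}, and the fact that $h_s^\beta$ in a fixed scale is an isomorphism invariant, without spelling it out. The one genuine point you rightly flag is that Proposition~\ref{ranon} as stated only treats scales of the form $n(g(n))^t$, so the Kochergin case requires a small bridge to the scale $n^t$. Your fix via $g(n)=n^\varepsilon$ (for a fixed $\varepsilon>0$, letting $t$ range over $(0,\infty)$ already gives $\liminf_r S^r_\cP/r^s=0$ for every $s>1$, so varying $\varepsilon$ is unnecessary) does this cleanly; an equivalent route the author may have had in mind (cf.\ the abstract's remark that local rank one has slow entropy at most $1$ in scale $n^t$) is to quote Lemma~\ref{nub} directly, which exhibits a subsequence $r_n$ along which $S^{r_n}_\cP(\epsilon,\beta)=O(r_n)$ and hence $\liminf_r S^r_\cP/r^t=0$ for all $t>1$. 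Both yield the same bound, and the rest of your reduction (contradiction against $h_s^\beta=1$ resp.\ $1+\gamma$, and invariance of $h_s^\beta$ for the non-isomorphism claim) matches the paper.
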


To determine the orbit growth in Theorems \ref{main} and \ref{main2} one needs to study the Birkhoff sums of the derivative of $f$. The growth of the Birkhoff sums of the derivative in case of asymmetric logarithmic singularities is of order $n\log n$, in case of power singularities the growth is of order $n^{1+\gamma}$. This justifies the choice of the scale in Theorem \ref{main} and \ref{main2} and also gives the intuition on the upper bound of $h_s^\beta$ (we bound the number of Hamming balls by the number of Bowen (topological) balls). It is the lower bound which is more difficult and crucial i.e. one needs to show that the statistical orbit growth (with Hamming metric) is equal to the topological (with Bowen metric). This can be considered as {\em variational principle} for slow entropy in the above classes (in general, variational principle for slow entropy does not hold \cite{Park}). 

The strategies of the proof of the lower bound are different in Theorem \ref{main} and Theorem \ref{main2}. In the first one we rely on slow, uniform divergence of nearby orbits-- Ratner's property (\cite{ratner2}). It is based on some ideas in \cite{FK}, where Ratner's property (in a weaker form) is shown to hold for a.e. Arnol'd flow. However, in the case of Theorem \ref{main2}, Ratner's property holds only for a measure $0$ set of irrational rotations. Therefore we have to apply another strategy. The technique is based on polynomial divergence of orbits (in the direction of the flow) and equidistribution properties of the base (Denjoy-Koksma type estimates). A similar technique has been used by Ratner in \cite{ratner} for proving that the square of the horocycle flow is not loosely-bernoulli and very recently in \cite{KRV} to give examples  on $\T^4$ of smooth K-automorphism which are not Bernoulli.
\paragraph{Plan of the paper.} In Section \ref{sec.def} we give the definition of slow entropy, local rank one and some other needed definitions. In Section \ref{sec.par} we introduce the definition of the PD-property\footnote{Acronym for parabolic divergence.}. This property, which is also of independent interest, is a crucial tool in the proof of Theorem \ref{main}.  In Section \ref{sran} we give the proof of Proposition \ref{ranon}. In Section \ref{Arn} we first show that if an Arnol'd flow satisfies the PD-property, then $h_s^\beta(T_t^f)=1$ (in the scale $a_n(t)=n(\log n)^t$). Finally we show that for a.e. $\alpha$ the corresponding Arnol'd flow satisfies the PD-property. This proofs Theorem \ref{main}. Then in Section \ref{Koc} we prove  
Theorem \ref{main2}.
\paragraph{Ackwnoledgments} The author would like to thank Anatole Katok for suggestion to study slow entropy for smooth surface flows. 
The author would also like to thank Mariusz Lema{\'n}czyk and Jean-Paul Thouvenot for several discussions on the subject. The author is grateful to Daren Wei for corrections on the first draft of the paper.

\section{Notation and basic definitions}\label{sec.def}
We will consider measure-preserving flows $(T_t)_{t\in\R}$ acting on probability Borel spaces $(X,\cB,\mu)$.
\subsection{Slow entropy}
Slow entropy was introduced in \cite{Kat-Tho} for actions of discrete amenable groups. Following the construction from \cite{Kat-Tho}, we give the definition of the $\beta$-slow entropy, $\beta\in(0,1]$, for flows below. 

For $r\in \R_+$ and $k\in \N$ we define the following symbolic space
\begin{equation}\label{symb0}
\Omega_{k,r}:=\{(w_t)_{t\in[0,r]}\;:\; w_t\in\{1,\ldots,k\}\}.
\end{equation}
Let $\lambda$ denote the Lebesgue measure on $\R$ (sometimes we will also write $\lambda$ for the Haar measure on the circle, it will be clear from the context which measure are we dealing with). For $r\in \R_+$ and $x,y\in \Omega_{k,r}$, the {\em Hamming distance} ({\em Hamming metric}) of $x,y$ is defined in the following way :
$$
d_r(x,y):=\frac{r-\lambda(\{s\in[0,r]\;:\; x_s=y_s)}{r}.
$$ 

Let now $(T_t)_{t\in\R}$ be a flow acting on $(X,\cB,\mu)$ and let 
$\cP=\{P_1,...,P_k\}$ be a finite measurable partition of $X$. For $r\in \R_+$ we define the {\em coding map} $\phi_{\cP,r}:X\to \Omega_{k,r}$,
\begin{equation}\label{par0}
 \phi_{\cP,r}(x)=(N_t(x))_{t\in[0,r]},\text{ where }   N_{t}(x)=i\;\;\; \text{   iff   }\;\;\; T_{t}x\in P_i.
\end{equation}
$\phi_{\cP,r}(x)$ is also called the {\em $\cP,r$-name} of $x\in X$. The space $\Omega_{k,r}$ is a metric space (equipped with the Hamming metric $d_r$) and there is also a natural probability measure on $\Omega_{k,r}$, $\nu:=(\phi_{\cP,r})_\ast\mu$, associated with the dynamics. Let us fix $\epsilon>0$ and define the following quantity:
\begin{multline}\label{bal1}
S^r_\cP(\epsilon,\beta):= \text{ the minimal number of } d_r-\text{ balls of radius }\epsilon \text{ whose union has }\\
\nu-\text{measure greater than }\beta-\epsilon.
\end{multline}
We denote $S^r_\cP(\epsilon,1)$ by $S^r_\cP(\epsilon)$. For $x\in X$ we denote $B^{r}_{\cP}(x,\epsilon)$ the $\epsilon$ ball around $x$.
Note that by the definition of the measure $\nu$ the above quantity depends strongly on the dynamics of the flow $(T_t)_{t\in\R}$.

The next step is to define a family of sequences $a(n,t)_{n\in \N,t\in \R_+}$ such that for fixed $t_0\in R_+$, we have $\lim_{n\to+\infty}a(n,t_0)=+\infty$.
The "scale" $a(n,t)$ will measure the asymptotics growth of orbits, it should be chosen in connection to the dynamics. The two scales which will be useful for us are $n^t$ and $n(logn)^t$. For $\beta\in(0,1]$ we define
\begin{equation}\label{epdel}
A(\cP,\epsilon,\beta):=\sup\{t>0\;:\;\liminf_{r\to+\infty}
\frac{S^r_\cP(\epsilon,\beta)}{a([r],t)}>0\}.
\end{equation}
One can also define $A(\cP,\epsilon,\beta)$ with $\limsup$ instead of $\liminf$, but for our purposes it is more convenient to work with $\liminf$. The function $S^r_\cP(\epsilon,\beta)$ is non-increasing in $\epsilon$, therefore we can define 
\begin{equation}\label{alm}
A^\beta(\cP):=\lim_{\epsilon\to 0}A(\cP,\epsilon,\beta).
\end{equation}
Then the $\beta$-slow entropy of $(T_t)_{t\in \R}$ is given by 
\begin{equation}\label{hsb}
h_s^\beta(T_t)=\sup_{\cP}A^\beta(\cP).
\end{equation}
We denote $h_s^1(T_t)$ by $h_s(T_t)$ and call simply the slow entropy of $(T_t)_{t\in\R}$.

Note that the slow entropy ($\beta$-slow entropy) of a system depends on the "scale" $a_n(t)$, but since we will always fix a scale on the beginning we omit it in the notation of the slow entropy ($\beta$-slow entropy).
 Note that 
$$0\leq, h^\beta_s(T_t)\leq +\infty\text{ for }0<\beta\leq 1.$$ If $(T_t)_{t\in\R}$ is ergodic
and $a(n,t)=e^{nt}$, then $h_s(T_t)$ is just the entropy of the flow, \cite{Kat-Tho}.\\

Recall that a partition $\cP$ is called a {\em generator} if the minimal $(T_t)_{t\in\R}$ invariant $\sigma$-algebra containing $\cP$ is the whole $\sigma$-algebra $\cB$. A sequence of partitions $(\cP_n)_{n\in\N}$ is called {\em generating} if it converges to a partition into points. The following proposition was shown to hold in \cite{Kat-Tho} for discrete groups and for slow entropy, however the proof in the case of flows and $\beta$-slow entropy is completely analogous.
\begin{pr}(\cite{Kat-Tho}, Proposition 1.)\label{calc} If $\cP_m$ is a generating sequence of partitions, then for every $\beta\in(0,1]$
$$ h^\beta_s(T_t)=\lim_{m\to+\infty}A^\beta(\cP_m).
$$
\end{pr}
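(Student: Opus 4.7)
The plan is to prove $A^\beta(\cQ)\le\liminf_{m\to\infty}A^\beta(\cP_m)$ for every finite measurable partition $\cQ$; the reverse inequality $\limsup_m A^\beta(\cP_m)\le h_s^\beta(T_t)$ is immediate from the supremum in \eqref{hsb}. The argument will rest on two general properties of $\cP\mapsto A^\beta(\cP)$: monotonicity under refinement, and stability under $d$-perturbations of the partition, where $d(\cP,\cQ):=\sum_i\mu(P_i\triangle Q_i)$.

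Monotonicity is pointwise: if $\cP$ refines $\mathcal R$, then $N_t^{\cP}(x)=N_t^{\cP}(y)$ forces $N_t^{\mathcal R}(x)=N_t^{\mathcal R}(y)$, so $d_r^{\mathcal R}\le d_r^{\cP}$ everywhere and hence $S^r_{\mathcal R}(\epsilon,\beta)\le S^r_{\cP}(\epsilon,\beta)$, yielding $A^\beta(\mathcal R)\le A^\beta(\cP)$. For stability, note that on $E=\{x:N^{\cP}(x)\ne N^{\cQ}(x)\}$ one has $\mu(E)<\delta:=d(\cP,\cQ)$, and
\[
d_r\bigl(\phi_{\cP,r}(x),\phi_{\cQ,r}(x)\bigr)=\frac{1}{r}\int_0^r\1_E(T_tx)\,dt,
\]
so by Markov's inequality this average lies below $\sqrt\delta$ on a set $G_r$ of $\mu$-measure at least $1-\sqrt\delta$. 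Given any $d_r^{\cP}$-cover of mass $\beta-\epsilon$ by $N:=S^r_{\cP}(\epsilon,\beta)$ balls, the balls in $\Omega_{k,r}$ of radius $\epsilon+\sqrt\delta$ centered at the same sequence points $\phi_{\cP,r}(c_i)$ cover the $\cQ$-name of every $x\in G_r$ in the original cover, since $d_r(\phi_{\cQ,r}(x),\phi_{\cP,r}(c_i))\le\sqrt\delta+\epsilon$. The new cover has $\nu_{\cQ}$-mass at least $\beta-(\epsilon+\sqrt\delta)$, whence
\[
S^r_{\cQ}(\epsilon+\sqrt\delta,\beta)\le S^r_{\cP}(\epsilon,\beta)\qquad\text{and thus}\qquad A(\cQ,\epsilon+\sqrt\delta,\beta)\le A(\cP,\epsilon,\beta).
\]

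The proof then concludes as follows. Because $(\cP_m)$ is generating, for each $m$ the conditional projection of $\cQ$ onto $\sigma(\cP_m)$ produces a partition $\widetilde\cQ_m$ coarser than $\cP_m$ with $\delta_m:=d(\cQ,\widetilde\cQ_m)\to 0$. Monotonicity gives $A^\beta(\widetilde\cQ_m)\le A^\beta(\cP_m)$; stability applied to $(\widetilde\cQ_m,\cQ)$ yields $A(\cQ,\epsilon+\sqrt{\delta_m},\beta)\le A(\widetilde\cQ_m,\epsilon,\beta)$ for every $\epsilon>0$. Sending $\epsilon\to 0$ gives $A(\cQ,\sqrt{\delta_m},\beta)\le A^\beta(\cP_m)$, and then $m\to\infty$ gives $A^\beta(\cQ)\le\liminf_m A^\beta(\cP_m)$. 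Taking the supremum over $\cQ$ completes the proof.

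The main obstacle is the careful bookkeeping in the stability step: the defect must be absorbed entirely into the ball radius (increasing $\epsilon$ to $\epsilon+\sqrt\delta$) and not into the mass parameter $\beta$, which requires placing the common sequence-space center at the $\cP$-name $\phi_{\cP,r}(c_i)$ rather than at the $\cQ$-name. A naive symmetric triangle inequality with $\cQ$-name centers would cost $2\sqrt\delta$ in both the radius and the mass, yielding only $A^{\beta'}(\cQ)\le\liminf_m A^\beta(\cP_m)$ for $\beta'<\beta$ and forcing an additional left-continuity argument for $\beta\mapsto A^\beta$. All other details are a routine adaptation to flows of the Katok-Thouvenot proof for discrete group actions given in \cite{Kat-Tho}.
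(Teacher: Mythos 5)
Your proof is correct. The paper gives no argument of its own for Proposition \ref{calc} --- it simply asserts that the statement follows as in \cite{Kat-Tho}, Proposition 1, and that the flow and $\beta$ version is ``completely analogous'' --- and what you have written out is exactly that argument (monotonicity of $A^\beta$ under refinement together with stability of $S^r_\cP(\epsilon,\beta)$ in the Rokhlin partition metric, absorbing the defect into the radius $\epsilon$ so that the mass threshold $\beta-\epsilon$ is respected automatically) transposed to the flow and $\beta$-slow-entropy setting, so it matches the approach the paper invokes.
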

It follows by the above proposition, that if $\cP$ is a generator, then
$h^\beta_s(T_t)=A^\beta(\cP)$.

\subsection{Special flows}
Denote by $\lambda$ the Lebesgue measure on $\R$.
 Let $T:(X,\cB,\mu)\to (X,\cB,\mu)$ be an automorphism and $f\in L^1(X,\cB,\mu)$, $f>0$. Then the special flow $\mathcal{T}_f:=(T_{t}^f)_{t\in\R}$
 acts on the space $(X^f,\cB^f,\mu^f)$, where
$X^f:=\{(x,s)\;:\;x\in X, 0\leq s<f(x)\}$ and $\cB^f:=\cB\otimes \cB(\R)$ and $\mu^f:=\mu\times \lambda$. Under the action of the flow $\mathcal{T}_f$ each point in $X^f$ moves vertically with unit speed and we identify the point $(x,f(x))$ with $(Tx,0)$. More precisely, if $(x,s)\in X^f$ then
$$T_t^f(x,s)=(T^{N(x,t)}x,s+t-f^{(N(x,t))}(x)),$$
where $N(x,t)\in\Z$ is unique such that
$$f^{(N(x,t))}(x)\leq s+t<f^{(N(x,t)+1)}(x).$$ 
For flows we consider, $T:(\T,\cB,\lambda)\to(\T,\cB,\lambda)$, $Tx=x+\alpha\;\; mod \;1$ and $f$ is given by \eqref{asu},\eqref{asu2} and \eqref{asu3}.
There is a natural prodcut metric $\rho^f$ on $\T^f$ given $by \rho^f((x,s),(y,s'))=
\|x-y\|+|s-s'|$. The following pseudo-metric will be crucial for estimating the number of Hamming balls: 
\begin{multline*}
d^f((x,s),(y,s'))=
\min(\rho^f((x,s),(y,s')),
\|x+\alpha-y\|+|f(x)-s+s'|,\\
\|x-(y+\alpha)\|+|f(y)-s'+s|).
\end{multline*}
We have the obvious inequality $d^f\leq \rho^f$. Using the pseudo-metric $d^f$ will make some computations later easier, i.e. $d^f(T_t(x,s),T_t(y,s'))<\epsilon$ for $t\in[A,B]$ means that the difference of Birkhoff sums for $x$ and $y$ is small and we avoid the problem of $T_tx$, $T_ty$ being on "different sides" of the graph of $f$ (which means that they are not close in $\rho^f$). For the rest of the paper we will work with the pseudo-metric $d^f$ (we will not use the triangle inequality).
\subsubsection{Denjoy-Koksma estimates}
The following lemma is a simple consequence of the Denjoy-Koksma inequality We make a standing assumption that $\int_\T fd\lambda=1$.
For $x\in \T$ denote
$$x^M_{min}=\min_{j\in [0,M)}d(x+j\a,0).$$
Let in \eqref{asu}, \eqref{asu2},\eqref{asu3}, $h(x)=x^{-\gamma}$ and $A_i=B_i=1$, $i=1,2,3$. The following lemma is a straightforward consequence of the Denjoy-Koksma inequality and Ostrovski expansion along the sequence of denominators (see e.g. \cite{FFK}, Lemma 3.1.)
\begin{lm}\label{koksi2} Let $\alpha\in \mathcal{D}$.  For every $x\in \T$ and every $M\in \Z$ $|M|\in [q_s,q_{s+1}]$, we have 
\begin{equation}\label{koks3}
q_s-4q_s^{1-\gamma}\leq f^{(q_s)}(x) \text{ and } |f^{(M)}(x)-M|\leq M^{1-\gamma}\log^4M+\log^3Mf(x^M_{min}) 
\end{equation} 
\begin{equation}\label{koks4}
f'(x^M_{min})-8q_{s+1}^{1+|\gamma|}\leq
|f'^{(M)}(x)|\leq f'(x^M_{min})+8q_{s+1}^{1+|\gamma|}
\end{equation}
and
\begin{equation}\label{koks5}
f''(x^M_{min})\leq f''^{(M)}(x)\leq
f''(x^M_{min})+8q_{s+1}^{2+|\gamma|}.
\end{equation}
\end{lm}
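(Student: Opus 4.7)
My plan is to derive all three estimates by combining the classical Denjoy-Koksma inequality with the Ostrowski expansion of $M$ along the denominators $(q_k)$ of $\alpha$; this is the standard template for such estimates, with the polylog factors coming from the Diophantine condition $\alpha\in\mathcal{D}$. First I would write $|M|=\sum_{k=0}^{s}b_k q_k$ with $0\leq b_k\leq a_{k+1}=\lfloor q_{k+1}/q_k\rfloor$, and decompose $f^{(M)}(x)$ accordingly into $L:=\sum_k b_k$ blocks of the form $f^{(q_k)}(y_\ell)$ for appropriate shifts $y_\ell$ in the orbit of $x$. The Diophantine condition gives $a_{k+1}\leq C\log q_k(\log k)^2$, hence $L=O(\log^3 M)$, which is the source of the $\log^3 M$ and $\log^4 M$ factors in the statement.

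For the Birkhoff sum estimates in \eqref{koks3} I would apply Denjoy-Koksma within each block after splitting $f=f_{\mathrm{reg},k}+f_{\mathrm{sing},k}$, where $f_{\mathrm{sing},k}$ is supported in a $(1/q_k)$-neighborhood of $0$. By the three-distance theorem, at most one of the $q_k$ orbit points of a block falls in this neighborhood, and the regular part has total variation $O(q_k^\gamma)$ and mean $1-O(q_k^{-(1-\gamma)})$; classical Denjoy-Koksma then gives $|f_{\mathrm{reg},k}^{(q_k)}(y_\ell)-q_k|=O(q_k^{1-\gamma})$. The singular part contributes $f(y^\ast_\ell)$ whenever the block captures an orbit point $y^\ast_\ell$ at distance $<1/q_k$ from $0$, and $0$ otherwise; summing across the $L$ blocks, only the single block containing the global minimum $x^M_{min}$ can produce a dominant term $f(x^M_{min})$ (picked up at most $\log^3 M$ times after accounting for the nested block structure of the Ostrowski expansion), while the remaining singular contributions are absorbed into the error $M^{1-\gamma}\log^4 M$. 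The lower bound $f^{(q_s)}(x)\geq q_s-4q_s^{1-\gamma}$ is the single-block version of the same calculation, since the singular part is nonnegative.

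For \eqref{koks4}--\eqref{koks5} I would isolate the single dominant summand $f'(x^M_{min})$ (resp.\ $f''(x^M_{min})$) and bound the remainder by Denjoy-Koksma applied to $f'$ (resp.\ $f''$) truncated outside a $1/(2q_{s+1})$-neighborhood of $0$. Using $|x^M_{min}|\geq 1/(2q_{s+1})$, the truncated derivative has total variation $O(q_{s+1}^{1+\gamma})$ and the truncated second derivative has total variation $O(q_{s+1}^{2+\gamma})$; since $\int_{\T} f'=0$ (and the analogous normalization for $f''$ contributes only a lower-order term), the Birkhoff sums of the truncations are controlled by the variation alone, yielding the stated two-sided inequalities.

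The main obstacle is the second step: proving that across the $O(\log^3 M)$ Ostrowski blocks the singular contributions $\{f(y^\ast_\ell)\}$ do not accumulate into anything worse than a single copy of $f(x^M_{min})$ multiplied by a polylog factor, plus a polynomial error $M^{1-\gamma}\log^4 M$. This requires a careful combinatorial argument using the three-distance theorem together with the tame Diophantine growth $q_{k+1}/q_k=O(\log q_k(\log k)^2)$ to ensure that only the unique block hosting $x^M_{min}$ contains an orbit point markedly closer to $0$ than $1/q_k$; all other block minima must sit at distance $\gtrsim 1/q_k$ and therefore get absorbed into the Denjoy-Koksma variation estimate.
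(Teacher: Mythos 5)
Your overall plan---Ostrowski decomposition of $M$ into $L=\sum_k b_k$ blocks of periodic length $q_k$, Denjoy--Koksma within each block after splitting $f$ into a truncated regular part and a singular remainder, with the Diophantine condition $\alpha\in\mathcal D$ supplying the polylog factors---is exactly the route the paper has in mind: the paper gives no proof of Lemma \ref{koksi2} at all, stating only that it is ``a straightforward consequence of the Denjoy-Koksma inequality and Ostrovski expansion'' and citing \cite{FFK}, Lemma 3.1. So the approach is the right one. There are, however, two points in your write-up that would need fixing.

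First, the per-block error is not $O(q_k^{1-\gamma})$ but $O(q_k^{\gamma})$. Truncating $f$ at height $f(1/q_k)\asymp q_k^{\gamma}$ gives $\operatorname{Var}(f_{\mathrm{reg},k})=O(q_k^{\gamma})$ and $\int f_{\mathrm{reg},k}=1-O(q_k^{\gamma-1})$, so Denjoy--Koksma yields $f_{\mathrm{reg},k}^{(q_k)}(y)=q_k\int f_{\mathrm{reg},k}+O(\operatorname{Var})=q_k+O(q_k^{\gamma})$; the exponent $1-\gamma$ you wrote is what appears in the \emph{mean defect} $q_k^{\gamma-1}$ before multiplying by $q_k$, and the two are conflated. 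This matters: for $\gamma>1/2$ your claimed bound would be stronger than what Denjoy--Koksma actually gives.

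Second, and more substantively, the ``main obstacle'' you identify in the last paragraph---a combinatorial argument showing that only the block hosting $x^M_{min}$ has a point markedly closer to $0$ than $1/q_k$---is neither needed nor true. If $M$ is close to $q_{s+1}$, then \emph{every} $q_s$-block of the orbit has its closest point at distance of order $1/q_{s+1}$, so there is no ``unique exceptional block.'' What makes the argument work is the trivial observation that every block minimum is, by definition, at distance $\geq x^M_{min}$ from $0$, hence each block's singular contribution is pointwise $\leq f(x^M_{min})$; since there are $L\leq \log^3 M$ blocks (using $\alpha\in\mathcal D$), the total singular contribution is $\leq L\,f(x^M_{min})\leq \log^3 M\,f(x^M_{min})$, which is precisely the second term in \eqref{koks3}. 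The three-distance theorem enters only to guarantee at most one point per block falls inside the truncation window; no accounting of which block it falls in is required. With those two corrections your sketch reproduces the intended proof of the lemma, and the same scheme carries the derivative estimates \eqref{koks4}--\eqref{koks5} as you describe.
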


The second lemma deals with asymmetric logarithmic singularities.

Let in \eqref{asu}, \eqref{asu2}, \eqref{asu3}, $h(x)=-\log x$,  $A_1=A_2=A_3=1$ and $B_1=B_2=B_3=2$.

\begin{lm}\label{koksi}   For every $x\in \T$ and every  $n\geq\ n_0$
$$
(1-10^{-3})q_n\log q_n - 4|f'(x^{q_n}_{min})|\geq f'^{(q_n)}(x)\leq 
(1+10^{-3})q_n\log q_n + 4|f'(x^{q_n}_{min})|
$$
\end{lm}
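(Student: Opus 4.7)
The approach is standard for roof functions with asymmetric logarithmic singularities, in the spirit of the Sinai--Khanin analysis and the methods developed in \cite{FK}. The plan is to split $f'$ into a part of bounded variation (handled directly by the Denjoy--Koksma inequality) and a singular part whose Birkhoff sum is computed by truncating near the singularity and isolating the single closest orbit point. More precisely, I would write
\begin{equation*}
f(x) = -A_1\log(x)\cdot \mathbf{1}_{(0,1/2)}(x) - B_1\log(1-x)\cdot \mathbf{1}_{(1/2,1)}(x) + g(x),
\end{equation*}
where $g\in C^2(\T)$, so that $f'(y) = -A_1/y$ near $0^+$, $f'(y) = B_1/(1-y)$ near $0^-$, plus a $C^1$ remainder $g'$ with $\int_\T g'\,d\lambda=0$. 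Denjoy--Koksma's inequality then gives $|g'^{(q_n)}(x)|\leq \mathrm{Var}(g') = O(1)$, which is negligible against $q_n\log q_n$.

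For the singular part, let $j_0\in[0,q_n)$ be the index realising $\|x+j_0\alpha\| = x^{q_n}_{\min}$. The plan is to isolate the value $f'(x+j_0\alpha)$, which is bounded in absolute value by $|f'(x^{q_n}_{\min})|$ and absorbed in the $4|f'(x^{q_n}_{\min})|$ error term of the statement. For the remaining $q_n-1$ orbit points I would introduce the truncation $\tilde f'(y) := f'(y)\,\mathbf{1}_{\|y\|>\delta}$, with $\delta$ chosen slightly larger than $x^{q_n}_{\min}$ and at most $c/q_n$; by the three-distance theorem, no iterate other than the one indexed by $j_0$ lies in $[-\delta,\delta]$, hence $\sum_{j\neq j_0}f'(x+j\alpha) = \tilde f'^{(q_n)}(x)$. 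The key calculation is
\begin{equation*}
\int_\T \tilde f'\,d\lambda \;=\; \int_\delta^{1/2}\!\!-\frac{A_1}{y}\,dy + \int_{1/2}^{1-\delta}\!\!\frac{B_1}{1-y}\,dy + O(1) \;=\; (B_1-A_1)\log(1/\delta) + O(1),
\end{equation*}
which with $A_1=1$, $B_1=2$ and $\delta$ of order $1/q_n$ equals $\log q_n + O(1)$. A second application of Denjoy--Koksma to $\tilde f'$ (whose total variation is $O(1/\delta)=O(q_n)$) then yields $\tilde f'^{(q_n)}(x) = q_n\log q_n + O(q_n)$.

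Combining the three contributions,
\begin{equation*}
f'^{(q_n)}(x) \;=\; q_n\log q_n \;+\; O(q_n) \;+\; O(|f'(x^{q_n}_{\min})|),
\end{equation*}
and for $n\geq n_0$ sufficiently large the subleading $O(q_n)$ correction is dominated by $10^{-3}q_n\log q_n$, yielding both inequalities (after fixing the typo in the first relation). The main obstacle is ensuring the estimate is uniform in $x$ while cleanly absorbing the potentially enormous closest-point contribution $|f'(x^{q_n}_{\min})|$: for Liouville-like $\alpha$ one may have $x^{q_n}_{\min}\sim 1/q_{n+1}$, in which case this term can dwarf $q_n\log q_n$, and the role of the three-distance theorem is precisely to guarantee that at the chosen scale only the single iterate $x+j_0\alpha$ lies in $[-\delta,\delta]$ so that the truncation picks off exactly one singular value. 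A secondary point is that the leading coefficient $1$ in front of $q_n\log q_n$ arises from the asymmetry $B_1-A_1 = 1$; a symmetric logarithmic singularity would give $\int\tilde f' = O(1)$ and only linear growth of the Birkhoff sum.
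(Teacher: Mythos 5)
Your proposal follows the same standard route that the paper invokes: Lemma~\ref{koksi} is stated in the paper with the one-line remark that it is ``analogous to the proof of Lemma~\ref{koksi2},'' which is in turn described as ``a straightforward consequence of the Denjoy--Koksma inequality and Ostrovski expansion along the sequence of denominators'' with a pointer to \cite{FFK}. Your argument --- isolate the unique orbit point in a window of size $\sim 1/q_n$ (three-distance theorem for the length-$q_n$ orbit), truncate $f'$ at that scale, apply Denjoy--Koksma to the truncated function whose mean is $(B_1-A_1)\log q_n + O(1)$ and whose variation is $O(q_n)$, and absorb the closest-point value into $4|f'(x^{q_n}_{\min})|$ --- is exactly a careful write-up of this standard technique. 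The leading coefficient coming from the asymmetry $B_1-A_1=1$ is correct, and the slack factor $4$ is indeed needed because the closest orbit point may sit on the $B_1$-side, where $|f'|$ is a factor $B_1/A_1=2$ larger than $|f'(x^{q_n}_{\min})|$.

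One small point worth tightening: the opening decomposition $f(x)=-A_1\log x\cdot\1_{(0,1/2)}-B_1\log(1-x)\cdot\1_{(1/2,1)}+g(x)$ with $g\in C^2(\T)$ is not literally implied by the asymptotic hypotheses \eqref{asu}--\eqref{asu3}; the limits only constrain the ratios $f^{(k)}/h^{(k)}$, and the residual $g'$ could in principle still be of unbounded variation (e.g.\ contributions of size $o(1/x)$ whose variation diverges logarithmically). A cleaner version of the same argument avoids the exact splitting and instead compares $f'$ to the model $\mp A_1/x, \pm B_1/(1-x)$ term by term over the orbit using \eqref{asu2}--\eqref{asu3}: the relative error at each orbit point is $o(1)$, so the deviation in the Birkhoff sum is $o(q_n\log q_n)$ and is comfortably swallowed by the $10^{-3}$ slack for $n\geq n_0$. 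This is almost certainly what the cited references do, and it does not change the structure or conclusion of your proof; it just removes the appeal to a decomposition that was not actually granted.
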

The proof of Lemma \ref{koksi} is analogous to the proof of Lemma \ref{koksi2}.

Let $(\kappa_n)$ be an increasing sequence growing slowly to $+\infty$ ($\kappa_n=\log n$ for example).

\begin{lm}\label{goodcon}Let $\alpha \in \mathcal{D}$. There exists $R_0\in \N$ such that for every $n\geq n_0$, $R\in \Z$, $|R|\geq R_0$ and every $x,y\in \T$ satisfying 
\begin{equation}\label{empt2}
\left(\bigcup_{i=0}^{R}T^{i}[x,y]\right)\cap\left[-\frac{\kappa_n}{q_n\log q_n},\frac{\kappa_n}{q_n\log q_n}\right]=\emptyset,
\end{equation}
for every $r\in \Z$,  $rR>0$, $|r|\in [0,\frac{q_n}{1000}]$, we have
\begin{equation}\label{birk.cont}
|f^{(r)}(x)-f^{(r)}(y)|\leq \frac{1}{500}q_n\log q_n\|x-y\|,
\end{equation}
and for every $r\in \Z$, $rR>0$, $|r|\in[\frac{q_n}{1000},R]$, we have
\begin{equation}\label{birk.cont2}
\frac{11}{10}|r|\log |r| \|x-y\|>|f^{(r)}(x)-f^{(r)}(y)|\geq \frac{9}{10}|r|\log |r| \|x-y\|.
\end{equation}
\end{lm}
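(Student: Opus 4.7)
The plan is to reduce the lemma to a quantitative Birkhoff-sum estimate for $f'$ and then apply the mean value theorem on the interval $[x,y]$. Since the orbit $\{T^i[x,y]:0\leq i\leq R\}$ misses the singularity, the map $\xi\mapsto f^{(r)}(\xi)$ is $C^1$ on the whole segment $[x,y]$ for every $|r|\leq R$. Hence by the mean value theorem there exists $\xi\in[x,y]$ with
$$f^{(r)}(y)-f^{(r)}(x)=f'^{(r)}(\xi)(y-x),\qquad f'^{(r)}(\xi)=\sum_{i=0}^{r-1}f'(\xi+i\alpha),$$
so both estimates \eqref{birk.cont} and \eqref{birk.cont2} reduce to two-sided (resp.\ one-sided) bounds on $|f'^{(r)}(\xi)|$ for $\xi$ whose orbit up to time $R$ lies outside $[-\kappa_n/(q_n\log q_n),\kappa_n/(q_n\log q_n)]$.

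The first step is to extend Lemma~\ref{koksi} from $M=q_s$ to arbitrary $M$, i.e.\ to establish
$$(1-\delta_M)|M|\log|M|-C|f'(x^M_{min})|\;\leq\;|f'^{(M)}(\xi)|\;\leq\;(1+\delta_M)|M|\log|M|+C|f'(x^M_{min})|,$$
with $\delta_M\to 0$ as $M\to\infty$, together with the crude companion bound $|f'^{(M)}(\xi)|\leq C\,|M|\log|M|+C|f'(x^M_{min})|$ valid for all $|M|\leq q_{s+1}$. This is done exactly as in the Kochergin analogue (Lemma~\ref{koksi2}) by Ostrowski expansion $|M|=\sum_{k}a_kq_k$ along the convergent denominators of $\alpha$ and summing the contribution of each block; the diophantine assumption $\alpha\in\mathcal{D}$ controls the $a_k$ and keeps the error terms from the ``bad'' blocks of size $q_{s+1}/q_s$ under control.

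Once this is in place, the two regimes are handled separately, both using the hypothesis \eqref{empt2}, which yields $x^r_{min}\geq \kappa_n/(q_n\log q_n)$ and therefore
$$|f'(x^r_{min})|\;\leq\; C\,\frac{q_n\log q_n}{\kappa_n}.$$
For $|r|\leq q_n/1000$, the upper bound above gives $|f'^{(r)}(\xi)|\leq (1+o(1))|r|\log|r|+C|f'(x^r_{min})|\leq \tfrac{1}{1000}q_n\log q_n+\tfrac{C}{\kappa_n}q_n\log q_n$, which for $n$ large is at most $\tfrac{1}{500}q_n\log q_n$, proving \eqref{birk.cont}. For $|r|\in[q_n/1000,R]$, the relative size of the error term is
$$\frac{|f'(x^r_{min})|}{|r|\log|r|}\;\leq\;\frac{C\,q_n\log q_n/\kappa_n}{(q_n/1000)\log(q_n/1000)}\;=\;O(1/\kappa_n),$$
and $\delta_r$ is $o(1)$ since $|r|\geq q_n/1000\geq R_0^{1/2}$, so choosing $n_0$ and $R_0$ large enough makes both errors smaller than, say, $1/20$, giving \eqref{birk.cont2} with the claimed constants $9/10$ and $11/10$.

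The main obstacle is the general Denjoy--Koksma estimate: Lemma~\ref{koksi} only states the case $M=q_s$, and one must obtain the generalisation with multiplicative error $\delta_M$ small enough to be absorbed into the tolerance $1\pm\tfrac{1}{10}$. The asymmetry $A_2=1$, $B_2=2$ is essential here, because it is what produces a genuine leading term of order $|M|\log|M|$ for $f'^{(M)}$ (symmetric singularities would give massive cancellation and a different asymptotic), and it is the diophantine control $q_{s+1}\leq Cq_s\log q_s(\log s)^2$ coming from $\alpha\in\mathcal{D}$ that makes this extension go through uniformly in $M\leq R$.
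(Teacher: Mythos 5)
Your overall strategy (mean value theorem, then Ostrowski decomposition of $r$ along denominators and a Denjoy--Koksma estimate block by block) is the same as the paper's, and the reduction to bounding $|f'^{(r)}(\theta_r)|$ for some $\theta_r\in[x,y]$ is exactly how the paper starts. However, there is a genuine gap in the way you formulate the extension of Lemma~\ref{koksi}.

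The claimed extended Denjoy--Koksma estimate
$$(1-\delta_M)|M|\log|M|-C|f'(x^M_{min})|\;\leq\;|f'^{(M)}(\xi)|\;\leq\;(1+\delta_M)|M|\log|M|+C|f'(x^M_{min})|$$
with $\delta_M\to 0$ does \emph{not} follow from $\alpha\in\mathcal D$ alone, and neither does the crude companion bound with a uniform constant $C$. When you write $|M|=\sum_k b_k q_k$ (with top level $j$, $q_j\leq |M|<q_{j+1}$), the block at level $j$ contributes, in addition to the leading term $b_jq_j\log(b_jq_j)$, an error which is a sum of $|f'|$ evaluated at the $b_j$ closest-to-zero points of the sub-orbits of length $q_j$. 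Under only the WLOG normalisation that the global minimum of the orbit sits at $\xi$ itself (a reduction you omit but also need), these $b_j$ points form an arithmetic progression with spacing $\approx 1/q_{j+1}$, so the best one can say is that the sum is $\lesssim |f'(x^M_{min})|+q_{j+1}\log b_j$. The term $q_{j+1}\log b_j$ is \emph{not} absorbable into $\delta_M|M|\log|M|$: by $\alpha\in\mathcal D$ one has $q_{j+1}\leq Cq_j\log q_j(\log j)^2$, so $q_{j+1}\log b_j/(|M|\log|M|)$ can be as large as $(\log j)^2\log\log q_j/b_j$, which is unbounded in $M$ (take $b_j=1$). So the $\delta_M\to 0$ claim is false, and the separation of the argument into ``prove a clean extended DK lemma, then plug in a bound for $|f'(x^r_{min})|$'' does not close.

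What actually saves the estimate is that \eqref{empt2} must be fed \emph{into} the Ostrowski decomposition, not applied only at the end to the single term $|f'(x^r_{min})|$. Since \eqref{empt2} forbids the entire orbit $\{x+i\alpha : 0\leq i\leq R\}$ from entering $[-\kappa_n/(q_n\log q_n),\kappa_n/(q_n\log q_n)]$, it simultaneously bounds \emph{all} the sub-block minima away from $0$, so the sum of $|f'|$ over the top-level block (when $j=n$) is $\lesssim b_nq_n\log q_n/\kappa_n = o(b_nq_n\log(b_nq_n))$ because $\kappa_n\to\infty$. The lower-level blocks ($m<n$) then contribute at most $O(q_n\log\log q_n)$, which is also negligible. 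This is precisely how the paper obtains the two-sided $(1\pm 10^{-2})$ bound for the level-$n$ block and hence the constants $9/10$ and $11/10$. To repair your argument you would have to replace the general extended DK lemma with a version conditioned on \eqref{empt2}, track the sub-block minima through the decomposition, and include the ``shift to the orbit minimum'' reduction --- at which point you have reproduced the paper's proof rather than a cleaner modular version of it.
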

\begin{proof}  By \eqref{empt2}, for every $r\in [0,R]$ there exists $\theta_r\in [x,y]$ such that 
$$
|f^{(r)}(x)-f^{(r)}(y)|=|f'^{(r)}(\theta_r)|\|x-y\|.
$$
We will conduct the proof for $r>0$, the proof for $r<0$ is analogous. Fix $r\in [0,R]$ and let $r=\sum_{i=0}^jb_iq_i$, where $b_i\leq\frac{q_{i+1}}{q_i}\leq \log^2q_i$. We can WLOG assume that 
\begin{equation}\label{mint}
\min_{0\leq w< r}d(\theta_r+w\alpha,0)=\theta_r.
\end{equation}
Indeed, we have (for $w$ which gives the minium in \eqref{mint}) 
$$
f'^{(r)}(\theta_r)=f'^{(-w)}(\theta_r+w\alpha)+f'^{(r-w)}(\theta_r+w\alpha),
$$
and we do the estimates below separately for $f'^{(-w)}(\theta_r+w\alpha)$ and $f'^{(r-w)}(\theta_r+w\alpha)$. 
We have
$$
f'^{(r)}(\theta_r)=\sum_{k=0}^{j} f'^{(b_{j-k}q_{j-k})}(\theta_r+\sum_{s=j-k+1}^{j}b_sq_s\alpha),
$$
(with $b_{j+1}q_{j+1}=0$). Denote $S_{j,k}= \sum_{s=j-k+1}^{j}b_sq_s\alpha$.
 Fix $k\in[0,j]$ . We have
$$
f'^{(b_{j-k}q_{j-k})}(\theta_r+S_{j,k})=
\sum_{s=0}^{b_{j-k}-1} f'^{(q_{j-k})}(\theta_r+S_{j,k}+sq_{j-k}\alpha).
$$
Denote $\Theta_{j,k}(s)=\theta_r+S_{j,k}+sq_{j-k}\alpha$ and consider  $(\Theta_{j,k}(s))^{q_{j-k}}_{min}$, $s=0,...,b_{j-k}-1$. They are contained in the interval $[-\frac{1}{2q_{j-k}},\frac{1}{2q_{j-k}}]$ and form a progression with spacing $\|q_{j-k+1}\alpha\|$. Moreover by \eqref{mint}, we have
$$
\min_{s=0,...,b_{j-k}-1}d\left((\Theta_{j,k}(s))^{q_{j-k}}_{min}, 0\right)\geq \frac{1}{2q_{j-k+1}}
$$
and by \eqref{empt2} (if $j=n$)
\begin{equation}\label{fhg}
\min_{s=0,...,b_{n}-1}d\left((\Theta_{n,0}(s))^{q_{n}}_{min}, 0\right)\geq \frac{\kappa_n}{q_n\log q_n}
\end{equation}

Therefore and by Lemma \ref{koksi}, we get 
\begin{multline*}
b_{j-k}q_{j-k}\log b_{j-k}q_{j-k} +4(2q_{j-k+1}+q_{j-k+1}\log b_{j-k}) \geq\\	f'^{(b_{j-k}q_{j-k})}(\theta_r+S_{j,k})\geq\\ b_{j-k}q_{j-k}\log b_{j-k}q_{j-k}- 4(2q_{j-k+1}+q_{j-k+1}\log b_{j-k}).
\end{multline*}
Moreover if $j=n$, by \eqref{empt2}, we get
$$
\frac{101}{100}b_nq_n\log(b_nq_n)\geq f'^{(b_{n}q_{n})}(\theta_r)\geq \frac{99}{100}b_nq_n\log(b_nq_n). 
$$
Then \eqref{birk.cont} and \eqref{birk.cont2} follow by summing up over $k=0,...,j$.
\end{proof}

Let $\omega_n=\log(\log n)$.

\begin{wn}\label{wniosek2} Assume $\alpha\in \mathcal{D}$.  For every $R\in \Z$ sufficiently large and for every $x,y\in \T$ such that 
$$
\left(\bigcup_{i=0}^{R}T^{i}[x,y]\right)
\cap\left[-\frac{1}{2|R|\omega_{|R|}},
\frac{1}{2|R|\omega_{|R|}}\right]=\emptyset,
$$
we have
$$
|f^{(R)}(x)-f^{(R)}(y)|< |R|\log |R|\omega_{|R|}^4\|x-y\|.
$$
\end{wn}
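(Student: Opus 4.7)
The plan is to mirror the proof of Lemma~\ref{goodcon} using the weaker avoidance $\tfrac{1}{2|R|\omega_{|R|}}$ in place of $\tfrac{\kappa_n}{q_n\log q_n}$; the loosening of the hypothesis is paid for by the factor $\omega_{|R|}^4$ in the conclusion, rather than the constant $11/10$. Note in particular that, as computed from the $\mathcal{D}$-condition, the avoidance interval of the corollary is generally strictly smaller than the one from the lemma, so a direct application is unavailable and one has to redo the estimate.

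First, by the mean value theorem write $|f^{(R)}(x)-f^{(R)}(y)| = |f'^{(R)}(\theta)|\,\|x-y\|$ for some $\theta \in [x,y]$. After the same cyclic shift as in the proof of Lemma~\ref{goodcon}, I may assume $\theta$ realizes $\min_{0\leq w<|R|} d(\theta+w\alpha,0)$. Choose $j$ with $q_j \leq |R| < q_{j+1}$ and take the Ostrowski expansion $R = \sum_{i=0}^{j} b_i q_i$; split
\[
f'^{(R)}(\theta) = \sum_{k=0}^{j} f'^{(b_{j-k} q_{j-k})}(\theta + S_{j,k}),
\]
exactly as in that lemma, and apply Lemma~\ref{koksi} summand by summand.

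For the lower levels $k \geq 1$, the minima $(\Theta_{j,k}(s))^{q_{j-k}}_{min}$ satisfy the generic positional bound $\geq \tfrac{1}{2q_{j-k+1}}$ used already inside the lemma, so each of these levels contributes $b_{j-k} q_{j-k} \log(b_{j-k} q_{j-k}) + O(q_{j-k+1}\omega_{|R|})$. For the top level $k=0$, the corollary's avoidance hypothesis forces $(\Theta_{j,0}(s))^{q_j}_{min} \geq \tfrac{1}{2|R|\omega_{|R|}}$ for every $s$, and, exploiting the arithmetic-progression structure of these points with spacing $\|q_{j+1}\alpha\|$, the harmonic sum $\sum_{s=0}^{b_j - 1} |f'((\Theta_{j,0}(s))^{q_j}_{min})|$ is bounded by $O(|R|\omega_{|R|} + q_{j+1}\omega_{|R|})$. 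Summing across levels, the principal $b_{j-k}q_{j-k}\log q_{j-k}$ terms aggregate to $O(|R|\log|R|)$, while the error, controlled via $\sum_{i=0}^{j+1} q_i = O(q_{j+1})$ and the $\mathcal{D}$-bound $q_{j+1} \leq C|R|\log|R|\,\omega_{|R|}^2$, amounts to $O(|R|\log|R|\,\omega_{|R|}^3)$. For $|R|$ large this is dominated by $|R|\log|R|\,\omega_{|R|}^4$, closing the estimate.

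The key technical delicacy is the top-level harmonic sum: one must use the correct mixture of the pointwise lower bound $\tfrac{1}{2|R|\omega_{|R|}}$ and the AP spacing $\|q_{j+1}\alpha\|$, not a naive pointwise bound of $2|R|\omega_{|R|}$ per term, which would give a wasteful total of order $|R|^2 \omega_{|R|}/q_j$. Once this single harmonic sum is handled with the right spreading estimate, the remainder is summation across levels and invocation of the Diophantine condition on $\alpha$.
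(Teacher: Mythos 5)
Your proof is correct, but it follows a noticeably more elaborate path than the paper's. After the mean value theorem (and the same cyclic shift), the paper does not re-run the full Ostrowski decomposition of Lemma~\ref{goodcon}. Instead, with $q_n \leq |R| < q_{n+1}$ it takes the coarse decomposition $|R| \approx kq_n$ (so $k \leq \log|R|\,\omega_{|R|}^2$ by the $\mathcal{D}$-condition), applies Lemma~\ref{koksi} $k$ times through the cocycle identity, and bounds every error term $|f'((\theta+iq_n\alpha)^{q_n}_{min})|$ by the single extreme value $|f'(\theta^{R}_{min})| \lesssim |R|\omega_{|R|}$, which the avoidance hypothesis supplies directly. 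This yields $k q_n \log q_n + k\,|f'(\theta^{R}_{min})| \lesssim |R|\log|R| + |R|\log|R|\,\omega_{|R|}^3$ in two lines, with no need for the level-by-level arithmetic-progression spreading you develop. Your scale-by-scale harmonic sums (with the AP spacing at each Ostrowski level) are a genuine refinement: they would give a pointwise sharper error than the paper's crude $k\cdot\sup$-type bound, and they localize exactly where the contribution is concentrated. The price is considerably more bookkeeping; since the target is only the factor $\omega_{|R|}^4$, the paper's blunter route already suffices. One small caveat on the framing: your claim that the corollary's avoidance interval is ``generally strictly smaller'' than that of Lemma~\ref{goodcon} is not uniformly true --- for $R$ near $q_n$ the lemma's interval $\frac{\kappa_n}{q_n\log q_n}$ is in fact smaller than $\frac{1}{2R\omega_R}$, since $(\log\log R)^2 \ll \log R$. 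The real reason the lemma cannot be invoked directly is that its hypothesis is tied to the scale $q_n$ rather than $R$, so the comparison depends on where $R$ sits in $[q_n,q_{n+1})$; the conclusion that one has to redo the estimate is still right.
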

\begin{proof} We will conduct the proof for $M>0$ (the proof in case $M<0$ is analogous). Let $k\in \N$ be unique such that $(k-1)q_n\leq M<kq_n$. By assumptions on $\alpha$ it follows that $k\leq \log M\omega_{M}^2$. 
By assumptions, for some $\theta\in[x,y]$, we have 
$$
|f^{(M)}(x)-f^{(M)}(y)|=|f'^{(M)}(\theta)|\|x-y\|.
$$
By \eqref{asu2} it follows that for some $C>0$
$
|f'^{(M)}(\theta)|\leq C|f'^{(kq_n)}(\theta)|$.  Moreover, by assumptions on $x,y$, by \eqref{koksi} for $\theta$ and cocycle identity, we have
$|f'^{(kq_n)}(\theta)|\leq kq_n\log q_n +kf'(\theta^R_{min})$. But $\theta^R_{min}\geq 2M\omega_M$. Putting all together we get 
$$
|f'^{(M)}(\theta)|\leq C M\log M+4M\log M\omega_M^3.
$$
This finishes the proof.
\end{proof}

\subsection{Rank one systems and systems of local rank one}\label{def.rank}
In this section we will introduce the notion of rank one and local rank one. There are several equivalent ways to define a rank one system (see \cite{Fen}). We will define rank properties in the language of special flows, \cite{fay}.
Let $(T_t)_{t\in\R}$ be an ergodic flow on $(X,\cB,\mu)$. For every $H\in \R_+$ and $1> \eta>0$  We can represent $(T_t)_{t\in\R}$ as a special flow over ergodic $S:(Y,\cC,\nu)$ and the a roof function $\phi$ satisfying:

\begin{enumerate}
    \item $\phi(y)<H$ for every $y\in Y$;	
	 \item  there exists a set $B=B(\eta,H)\in \cC$ with $\nu(B)>1-\eta$ and such that $\phi(y)=H$ for every $y\in B$.
\end{enumerate} 
We will denote the special representation of $(T_t)$ by $T_t^\phi$ which acts on\\ $(Y^f,C^f,\nu^f)$.
Fix a finite partition $\cP$ of $X$. The set $\bigcup_{s=0}^HT_s(B)$ is called a {\em tower for $(T_t)_{t\in\R}$}. For $0\leq s<H$ the set $T_s(B)$ is called a {\em level} of the tower. For $\epsilon>0$ we say that a level $T_s(B)$ is {\em $\epsilon$- monochromatic (for $\cP$)} if its $1-\epsilon$ proportion with respect to measure $(T_s)_\ast\nu$ is included in one atom of the partition $\cP$. A tower for $(T_t)_{t\in\R}$ is called {\em $\epsilon$-monochromatic (for $\cP$)} if $1-\epsilon$ proportion of levels (with respect to the Lebesgue measure $\lambda$ on $[0,H]$) is $\epsilon$-monochromatic.

\begin{df}\label{rankone} Fix $\beta \in(0,1]$. An ergodic flow $(T_t)_{t\in\R}$ acting on $(X,\cB,\mu)$ is an {\em $\beta$-rank one flow} if for every $\epsilon>0$ and every finite partition $\cP$ of $X$ there exists a tower for $(T_t)_{t\in\R}$ of measure greater than $\beta-\epsilon$ which is $\epsilon$-monochromatic for $\cP$.
\end{df}
An $1$-rank one flow is called a {\em rank one flow}.

\section{Parabolic divergence property}\label{sec.par}
In this section we will introduce a property which is characteristic for parabolic dynamics and is a useful tool for computing slow entropy of such systems. We will assume that $(T_t)_{t\in\R}$ is an ergodic flow on a metric space $(X,\rho)$ with Borel $\sigma$-algebra and Borel probability measure $\mu$. Let $d$ be a pseudo-metric on $X$. For 
$x,y\in X$ such that $d(x,y)<10^{-2}$  denote
\begin{multline}\label{ixy}I_{x,y}-\text{ the maximal time interval containing $0$ such that for every }t\in I_{x,y}, \\
d(T_tx,T_ty)<10^{-2}.
\end{multline}
Notice that for every $t\in I_{x,y}$ we have 
\begin{equation}\label{neweq}
I_{T_tx,T_ty}=I_{x,y}.
\end{equation}
\begin{df}\label{pd} $(T_t)_{t\in\R}$ is said to have {\em PD-property}\footnote{An acronym for parabolic divergence.} if there exist $c_0,c_1\in(0,1)$ such that for every $\epsilon>0$ there exists $Z=Z(\epsilon)$, $\mu(Z)>1-\epsilon$ and $\delta=\delta(\epsilon)$ such that for every $x,y\in Z$ $d(x,y)<\delta$, we have 
\begin{equation}\label{op1}
\left|\{t\in I_{x,y}\;:\; d(T_tx,T_ty)>c_0\}\right|>c_1|I_{x,y}|.
\end{equation}
\end{df}
Since $(X,d)$ is a polish space, there exists a sequence of compact sets $(K_n)_{n\in \N}$ such that $\mu(K_n)\to 1$ and $K_i\subset K_{i+1}$ for every $i\in \N$. For fixed $n\in \N$ let $\mathcal{q}_n$ be a partition of $K_n$ into a finite number of disjoint sets of diameter$\in (\frac{1}{n},\frac{2}{n})$. Then $\cQ_n=\{\mathcal{q}_n,K_n^c\}$ is a partition of $X$.\\
The following proposition shows that in the class of flows satisfying the PD-property, if two points are Hamming close , they have to be Bowen close on a substantial proportion of their orbits. This is the crucial tool for Theorem \ref{main}.

\begin{pr}\label{db} Let $(T_t)$ have PD-property (with constants $c_0,c_1$). Then for every (sufficiently small) $\eta>0$  there exist $n_0=n_0(\eta),R_0=R_0(\eta)\in \N$ and a set $V=V(\eta)\subset X$, $\mu(V)>1-\eta$ such that for every $n>n_0$, $R>R_0$ and every $x,y\in V$ if $d^{\cQ_n}_R(x,y)<\min(c_0^2,c_1^2)$ then there exists an interval $[A,B]\subset [0,R]$ such that $\lambda(B-A)>\frac{c_1R}{10}$ and $d(T_tx,T_ty)<10^{-2}$ for every $t\in[A,B]$.
\end{pr}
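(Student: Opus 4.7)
The plan is to derive a measure-theoretic contradiction: if $x,y$ are Hamming-close along $[0,R]$ but never remain Bowen-close (in the pseudo-metric $d$) on a sub-interval of length exceeding $c_1R/10$, then the PD-property forces their orbits to spread apart over too large a fraction of $[0,R]$ to be consistent with the Hamming hypothesis.

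For the setup, I fix $\eta>0$ small and apply the PD-property with $\epsilon=\eta/100$ to obtain $Z=Z(\epsilon)$ with $\mu(Z)>1-\epsilon$ and a threshold $\delta=\delta(\epsilon)<10^{-2}$. By Birkhoff's ergodic theorem combined with Egorov, I choose $V\subset X$ with $\mu(V)>1-\eta$ and integers $R_0,n_0$ such that for every $x\in V$, $R\geq R_0$, $n\geq n_0$, the Birkhoff averages of $\mathbbm{1}_{Z^c}$ and $\mathbbm{1}_{K_n^c}$ along $(T_tx)$ are each at most $2\epsilon$ (using $K_n^c\subset K_{n_0}^c$ for $n\geq n_0$). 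Enlarging $n_0$ I also ensure that the atoms of $\mathcal{q}_n$ have diameter less than $\min(c_0,\delta)$, so any two points in a common atom are at pseudo-distance $<\delta$ (allowing PD to apply) and $<c_0$.

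Given $x,y\in V$ with $d^{\cQ_n}_R(x,y)<\min(c_0^2,c_1^2)$, I introduce the ``well-coded'' set
$$S_{good}:=\{t\in[0,R]:T_tx,T_ty\in Z\cap K_n \text{ and lie in the same atom of } \mathcal{q}_n\}.$$
The Hamming hypothesis together with the Birkhoff control on $V$ yields $\lambda(S_{good})\geq(1-\min(c_0^2,c_1^2)-O(\epsilon))R$. Note $c_0<10^{-2}$ (otherwise the PD-conclusion $\{d>c_0\}\cap I_{x,y}$ would be empty, since $I_{x,y}\subset\{d<10^{-2}\}$), so in fact $\lambda(S_{good})\geq(1-10^{-4}-O(\epsilon))R$. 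For every $t_0\in S_{good}$ the pair $(T_{t_0}x,T_{t_0}y)$ sits in $Z\times Z$ at pseudo-distance $<\delta$, and PD yields
$$\lambda\bigl(\{s\in\tilde I_{j(t_0)}:d(T_sx,T_sy)>c_0\}\bigr)\geq c_1|\tilde I_{j(t_0)}|,$$
where $\tilde I_{j(t_0)}\subset\R$ is the full maximal interval around $t_0$ on which $d(T_\cdot x,T_\cdot y)<10^{-2}$.

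Assume for contradiction that no $[A,B]\subset[0,R]$ of length $>c_1R/10$ lies in $\tilde C:=\{t\in\R:d(T_tx,T_ty)<10^{-2}\}$. The components of $\tilde C$ meeting $[0,R]$ split into interior components (contained in $(0,R)$) and at most two boundary components crossing $\{0\}$ or $\{R\}$. For each interior good component (i.e.\ containing some $t_0\in S_{good}$), $|\tilde I_j|=|I_j|$ and $S_{good}\subset\{d<c_0\}$, so PD gives $\lambda(S_{good}\cap\tilde I_j)\leq(1-c_1)|I_j|$; summing over interior good components yields at most $(1-c_1)R$. For the at most two boundary components, the crude bound $\lambda(S_{good}\cap\tilde I_j)\leq|I_j|\leq c_1R/10$ gives at most $c_1R/5$. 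Since $S_{good}\subset\tilde C$, adding these gives $\lambda(S_{good})\leq(1-4c_1/5)R$, which combined with the lower bound yields $4c_1/5\leq 10^{-4}+O(\epsilon)$, a contradiction for $\epsilon$ small enough. \textbf{The main obstacle} is the handling of the boundary components of $\tilde C$: their full length can far exceed their intersection with $[0,R]$, so the PD-bound $(1-c_1)|\tilde I_j|$ does not restrict cleanly to $[0,R]$; the crude bound $|I_j|\leq c_1R/10$ absorbs them thanks to the very small Hamming slack $c_0^2<10^{-4}$.
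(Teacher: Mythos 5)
Your overall strategy mirrors the paper's: replace the Hamming hypothesis by a large ``well-coded'' set of times, cover the Bowen-close set $\tilde C$ by the maximal intervals $I_{T_{t_i}x,T_{t_i}y}$, apply the PD-property on the interior intervals to cap how much of the good set they can contain, and deal with the at most two boundary intervals separately. The paper argues \emph{directly} (from the resulting inequality it concludes that one of the two boundary intervals already has length $>c_1R/4$), whereas you run the same estimate as a proof by contradiction after assuming every sub-interval of $\tilde C\cap[0,R]$ has length $\le c_1R/10$; these are interchangeable.

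There is, however, a genuine arithmetic gap at the very end. You replace the sharp lower bound $\lambda(S_{good})\geq(1-\min(c_0^2,c_1^2)-O(\epsilon))R$ by the cruder $\lambda(S_{good})\geq(1-10^{-4}-O(\epsilon))R$ (using $c_0<10^{-2}$), and then derive ``$4c_1/5\le 10^{-4}+O(\epsilon)$, a contradiction.'' But $4c_1/5>10^{-4}$ requires $c_1>1.25\times 10^{-4}$, and the PD constant $c_1$ need not satisfy this; indeed, in the paper's application to special flows (Proposition \ref{cocy}) one takes $c_1=10^{-4}$, for which $4c_1/5=0.8\times 10^{-4}<10^{-4}$ and your ``contradiction'' is vacuous. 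The correct comparison is against $\min(c_0^2,c_1^2)\le c_1^2$: since $c_1^2<\frac{4}{5}c_1$ whenever $c_1<\frac45$, the inequality $4c_1/5\le c_1^2+O(\epsilon)$ \emph{is} contradictory once $\eta$ (hence $\epsilon$) is small enough relative to $c_1$. So keep the lower bound $1-\min(c_0^2,c_1^2)-O(\epsilon)$ rather than discarding it in favor of $10^{-4}$; the crude $10^{-4}$ bound controls $c_0$, not $c_1$, and it is the smallness of the Hamming slack \emph{relative to $c_1$} that drives the argument, exactly as in the paper's computation.
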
  
\begin{proof} Fix $\eta\in(0,10^{-2}\min(c_0^2,c_1^2))$. Let $Z=Z(\eta^4)$, $\mu(Z)\geq 1-\eta^4$ and $\delta=\delta(\eta^4)$ come from
PD-property. Let $n_0=n_0(\eta,c_0,c_1)\in \N$ be such that $\mu(K_{n_0})>1-\eta^4$ and $n_0\geq \min(c_0^2,c_1^2,\delta)^{-10}$. Since $(T_t)_{t\in\R}$ is ergodic, there exists a set $V=V(\eta)\subset X$, $\mu(V)>1-\eta$ and a number $R_0=R_0(\eta)$ such that for $R>R_0$ and $x\in V$
\begin{equation}\label{birk4}
\frac{1}{R}\int_0^R\chi_{K_{n_0}}(T_tx)dt>1-\eta^2
\end{equation}
and
\begin{equation}\label{birk3}
\frac{1}{R}\int_0^R\chi_Z(T_tx)dt>1-\eta^2.
\end{equation}
 Let  $R>R_0$, $n\geq n_0$ and take any $x,y\in V$ with $d^{\cQ_n}_R(x,y)<\min(c_0^2,c_1^2)$. Define

$$U:=\{t\in[0,R]\;:\; T_tx,T_ty\in Z\text{ and } d(T_tx,T_ty)<\delta\}.$$
Notice that by the definition of $\cQ_n$, if for some $s\in [0,R]$, $T_sx,T_sy\in K_n$ and $T_sx,T_sy$ are in one atom of $\cQ_n$, then $d(T_sx,T_sy)<\frac{2}{n}\leq \frac{2}{n_0}\leq \delta$. Therefore, by \eqref{birk4} and \eqref{birk3} for $x,y\in V$ and the fact that $K_{n_0}\subset K_n$ it follows that
\begin{equation}\label{lt}
|U|\geq (1-\min(c_0^2,c_1^2)-2\eta)R.
\end{equation}
Consider the set 
$$C_R:=\{t\in[0,R]: d(T_tx,T_ty)<10^{-2}\}.$$
Then $U\subset C_R$ and $C_R$ is a union of  (disjoint) intervals $\in[0,R]$. 
By the definition of $I_{x,y}$ (see \eqref{ixy}) it follows that there exists $l\in \N$ and an increasing sequence $(t_i)_{i=1}^l\subset [0,R]^l$ such that for $i\neq j$, $I_{T_{t_i}x,T_{t_i}y}\cap I_{T_{t_j}x,T_{t_j}y}=\emptyset
$ and
\begin{equation}\label{neweq2}
U\subset C_R\subset \bigcup_{i=1}^l I_{T_{t_i}x,T_{t_i}y}
\;\text{  and  }\;
\bigcup_{i=2}^{l-1}I_{T_{t_i}x,T_{t_i}y}\subset C_R.
\end{equation}
By \eqref{neweq2},
$I_{T_{t_i}x,T_{t_i}y}\subset [0,R]$ for $i=2,...,l-1$. Notice that by the PD-property, for every $s\in U$ 
$$
|I_{T_{s}x,T_{s}y}\cap U|\leq (1-c_1)\left|I_{T_{s}x,T_{s}y}\right|.
$$
Moreover for every $i\in\{2,...,l-1\}$ if $s\in U\cap I_{T_{t_i}x,T_{t_i}y}$, then by \eqref{neweq} we have 
$$
|I_{T_{t_i}x,T_{t_i}y}\cap U|\leq (1-c_1) \left|I_{T_{t_i}x,T_{t_i}y}\right|.
$$

Hence, by \eqref{lt}, \eqref{neweq2} and \eqref{birk3}, we have 
\begin{multline*}
(1-\min(c_0^2,c_1^2)-2\eta)R\leq |U|\leq \sum_{i=1}^l|U\cap I_{T_{t_i}x,T_{t_i}y}|\leq
\left|[0,R]\cap I_{T_{t_1}x,T_{t_1}y}\right|+\\
\left|[0,R]\cap I_{T_{t_l}x,T_{t_l}y}\right|+ (1-c_1)
\sum_{i=2}^{l-1}|I_{T_{t_i}x,T_{t_i}y}|\leq\\
\left|[0,R]\cap I_{T_{t_1}x,T_{t_1}y}\right|+
\left|[0,R]\cap I_{T_{t_l}x,T_{t_l}y}\right|+ (1-c_1)R.
\end{multline*}
Therefore, $\left|[0,R]\cap I_{T_{t_1}x,T_{t_1}y}\right|>\frac{c_1R}{4}$ or  $\left|[0,R]\cap I_{T_{t_l}x,T_{t_l}y}\right|>\frac{c_1R}{4}$. We conclude by setting $[A,B]$ to be the longer of  $[0,R]\cap I_{T_{t_1}x,T_{t_1}y},[0,R]\cap I_{T_{t_l}x,T_{t_l}y}$. 
\end{proof}

\subsection{PD-property for special flows}
In this section we will state a condition which implies PD-property for special flows. Let $T:(X,d,\mu)\to (X,d,\mu)$ be an ergodic isometry and let $f\in L^1(X)$ be strictly positive. Assume for simplicity that $\int_X f(x)d\mu=1$. For $x,y\in X$ let 
\begin{multline}\label{jxy}
J_{x,y}:=\text{the maximal interval }\subset \Z \text{ such that } 0\in J_{x,y} \text{ and for every }\\
 n\in J_{x,y},\;\;|f^{(n)}(x)-f^{(n)}(y)|<1/50.
\end{multline}

\begin{pr}\label{cocy} If for every $\epsilon>0$ there exists $\delta''=\delta''(\epsilon)$ and a set $Z_1=Z_1(\epsilon)\subset X$, $\mu(Z_1)>1-\epsilon$, such that  
 for every $x,y\in Z_1$, $d(x,y)<\delta''$ there exists an interval $K=K_{x,y}\subset J_{x,y}$, $|K|\geq \frac{|J_{x,y}|}{300}$, such that for every $n\in K$
$$
|f^{(n)}(x)-f^{(n)}(y)|>\frac{1}{400},
$$
then $(T_t^f)_{t\in\R}$ has the 
PD-property (with the pseudo-metric $d^f$).
\end{pr}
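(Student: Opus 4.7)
The strategy is to translate the discrete divergence hypothesis on $|f^{(n)}(x)-f^{(n)}(y)|$ into the continuous PD-property by exploiting that the pseudo-metric $d^f$ between two flow orbits, restricted to each ``fiber segment'' $[f^{(n)}(x)-s,\,f^{(n+1)}(x)-s]$, is essentially equal to the Birkhoff sum difference $|f^{(n)}(x)-f^{(n)}(y)|$ up to an additive error controlled by $\|x-y\|+|s-s'|$.

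First I will fix $\epsilon>0$ and construct the good set $Z\subset X^f$. Applying the hypothesis with parameter $\epsilon'\ll\epsilon$ yields $Z_1\subset X$ with $\mu(Z_1)>1-\epsilon'$ and a threshold $\delta''$. Using $f\in L^1(\mu)$ I may additionally arrange $\int_{X\setminus Z_1}f\,d\mu<\epsilon/10$, and by Birkhoff's ergodic theorem applied to $f$, to $\mathbf{1}_{Z_1}$, and to $\mathbf{1}_{\{f\leq M\}}$ for a sufficiently large cutoff $M$, I extract a subset $Y_1\subset X$ with $\mu(Y_1)>1-\epsilon/10$ and $N_0\in\N$ so that for every $x\in Y_1$ and $|n|\geq N_0$ one has $f^{(n)}(x)/n\in[1-\epsilon,1+\epsilon]$ and the orbit $\{T^k x\}_{k=0}^{n-1}$ visits $Z_1\cap\{f\leq M\}$ with frequency $\geq 1-\epsilon$. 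Setting $Z:=\{(x,s)\in X^f:x\in Z_1\cap Y_1\}$ then gives $\mu^f(Z)>1-\epsilon$.

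Next, for $(x,s),(y,s')\in Z$ with $d^f<\delta\ll\delta''$, by a bounded shift along the flow (using $T_t$-invariance of $d^f$ to select the first branch in its definition) I may assume $\|x-y\|+|s-s'|<\delta$. Since $T$ is an isometry, $\|T^n x-T^n y\|\equiv\|x-y\|$ throughout. A direct branch-by-branch analysis of $d^f$ at a time $t$ inside the $n$-th fiber segment of both orbits then shows that the first branch equals $\|x-y\|+|f^{(n)}(x)-f^{(n)}(y)+(s'-s)|$ while the other two branches are bounded below by a positive constant coming from $d(Tx,x)$; in the short transition zones of length $\leq|f^{(n)}(x)-f^{(n)}(y)|+|s-s'|$ near each fiber boundary the wrap-around branches take over but remain controlled by the Birkhoff sum difference at the adjacent index. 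Consequently $I_{(x,s),(y,s')}$ is sandwiched inside the time range indexed by $J_{x,y}$ and satisfies $|I_{(x,s),(y,s')}|\leq\sum_{n\in J_{x,y}}f(T^n x)+O(1)\leq(1+2\epsilon)|J_{x,y}|$ via Birkhoff on $Y_1$.

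Finally, for each $n\in K$ the same branch analysis yields $d^f(T_t(x,s),T_t(y,s'))\geq 1/400-2\delta$ throughout the $n$-th fiber segment outside a transition zone of length $\leq 1/50$. Restricting to those $n\in K$ with $T^n x\in Z_1\cap\{f\leq M\}$---which covers all but an $\epsilon$-fraction of $K$ by the Birkhoff frequency estimate---gives $f(T^n x)\in[c,M]$ for a positive constant $c=c(M)$, so each such fiber contributes a time subinterval of length $\geq f(T^n x)-1/50\geq c/2$ on which $d^f>c_0:=1/500$. Summing over the selected $n\in K$ and combining with $|K|\geq|J_{x,y}|/300$ and the upper bound on $|I_{(x,s),(y,s')}|$ yields the PD-property with $c_0=1/500$ and some universal $c_1>0$. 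The main obstacle will be the careful branch-by-branch analysis of $d^f$ near fiber transitions, where the wrap-around branches become active and must be bounded below in terms of the Birkhoff sum differences at neighboring indices; a secondary difficulty is the threshold mismatch $10^{-2}$ (in $I$) versus $1/50$ (in $J$), requiring verification that $I$ does not shrink to a negligible fraction of the time range of $J$, which is ensured once $f$ is bounded away from zero on $\{f\leq M\}$.
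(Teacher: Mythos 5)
Your overall strategy matches the paper's: you build the good set from $Z_1$ together with Egorov and Birkhoff sets ensuring $f^{(n)}\approx n$, relate the continuous interval $I_{(x,s),(y,s')}$ to the discrete $J_{x,y}$ through Birkhoff sums of $f$, and use $K_{x,y}$ to manufacture a proportional set of divergent times. There is, however, a genuine gap in the per-fiber estimate. You assert that each good fiber contributes an aligned sub-interval of length $\geq f(T^n x)-1/50\geq c/2$ on which $d^f>c_0:=1/500$, where $c$ is a lower bound for $f$. The aligned part of the $n$-th fiber is cut at both ends by transition zones whose combined length can be as large as $2\sup_{k\in J_{x,y}}|f^{(k)}(x)-f^{(k)}(y)+s'-s|$, i.e.\ up to roughly $1/25+O(\delta)$, so the inequality $f(T^n x)-1/50\geq c/2$ fails unless $\inf_\T f\geq 1/25$. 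Nothing in the standing assumptions (only $\int_\T f\,d\lambda=1$ and continuity away from the singularity) gives such a bound; when $\inf_\T f$ is small the aligned portion of a fiber can be empty, the fiber contributes nothing, and your sum yields no lower bound at all.

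The paper circumvents this in two related ways. It takes $c_0$ proportional to $\min(\inf_\T f,\alpha)$ rather than a universal $1/500$, which makes the wrap-around branches of $d^f$ stay above $c_0$ even at misaligned times $t$ whenever the relevant index lies in $K_{x,y}$ (the second coordinate of the appropriate wrap-around branch reduces to $|f^{(n)}(x)-f^{(n)}(y)+s'-s|>1/400-\delta$, while the other two branches carry a $\|\alpha\|$- or $\|2\alpha\|$-type term). It then estimates the length of the single contiguous block $[f^{(c)}(x),f^{(d)}(x)]$ at once via the ergodic averages $f^{(n)}(x)\approx n$, never decomposing fiber by fiber, and therefore never needing a lower bound on individual fiber heights. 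To repair your argument you would either need to add the misaligned times with $N(\cdot,t)\in K$ to your divergence count (which forces $c_0\lesssim\min(\|\alpha\|,\|2\alpha\|)$, not a universal constant), or pass to the block-level estimate as the paper does.
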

\begin{proof} Let $c_0=10^{-4} \min(\inf_\T f, \alpha)$ and $c_1=10^{-4}$. We will show that $(T_t^f)$ has the PD-property with $c_0$ and $c_1$. Fix $\epsilon>0$. Let $N_0=N_0(\epsilon)$ and $A=A(\epsilon)$, $\mu(A)>1-\epsilon^3$ be such that for every $n\in \Z$, $|n|\geq N_0$ and every $x\in A$ 
\begin{equation}\label{ergth}
(1-10^{-4})n<f^{(n)}(x)<(1+10^{-4})n.
\end{equation}
Let $\delta'=\delta'(\epsilon)$ and $B=B(\epsilon)$, $\mu(B)>1-\epsilon^3$ be such that for every $x,y\in B$, $d(x,y)<\delta'$ and $|n|\leq N_0$
\begin{equation}\label{yeg1} 
|f^{(n)}(x)-f^{(n)}(y)|<10^{-4}.
\end{equation}
The existence of such $\delta'$ and $B$ follows from Egorov's theorem: since $f$ is measureable, all $f^{(i)}$, $|i|\leq N_0$, are uniformly continuous on a set of arbitrary large measure. We will now define $\delta$ and $Z$ for PD-property. Let $\delta''=\delta''(\epsilon^3)$ and $Z_1=Z_1(\epsilon^3)$ come from assumptions. 

Define $\delta:=\min(\delta',\delta'')$ and $Z:=\{(x,s)\in X^f\;:\; x\in A\cap B\cap Z_1\}$. We will show that PD-property holds for all $(x,s),(y,r)\in Z$, $d^f((x,s),(y,r))<\delta$. \\
Fix $(x,s),(y,r)\in Z$, $d^f((x,s),(y,r))<\delta$. 
Let $J_{x,y}=[a,b]\cap \Z$ and $K_{x,y}=[c,d]\cap \Z$. It follows by the definition of $J_{x,y}$ that $a\leq 0\leq b$, $[c,d]\subset [a,b]$ and $d-c\geq \frac{b-a}{300}$. Therefore 
$$
d-c>\frac{b-a}{300}\geq \frac{b}{300}\geq \frac{d+c}{600}.
$$
Moreover since $x,y\in B$, $d^f((x,s),(y,r))<\delta$ and by \eqref{yeg1} it follows that 
\begin{equation}\label{larg}\min(|a|,|b|,|c|,|d|)>N_0.
\end{equation}

 Notice that by the definition of $I_{(x,s),(y,r)}$ and $J_{x,y}$ we get
$$
I_{(x,s),(y,r)}\subset [f^{(a)}(x)-s,f^{(b)}(x)-s].
$$
Moreover by the definition of $K_{x,y}\subset J_{x,y}$, the fact that $d^f((x,s),(y,r))<\delta$ and the definition of $c_0$ it follows that  
$$
[f^{(c)}(x),f^{(d)}(x)]\subset\{t\in I_{(x,s),(y,r)}\;:\; d^f(T^f_t(x,s),T^f_t(y,r))>c_0\}.
$$
Therefore, since $x,y\in B$, by \eqref{larg} and \eqref{ergth}, we get
\begin{multline*}
|\{t\in I_{(x,s),(y,r)}\;:\; d^f(T^f_t(x,s),T^f_t(y,r))>c_0\}|\geq f^{(d)}(x)-f^{(c)}(x)\geq\\
 (d-c)- \frac{d+c}{1200}\geq \frac{d-c}{2}\geq \frac{b-a}{600}\geq \frac{\max(|b|,|a|)}{600}\geq \\
\frac{\max(|f^{(b)}(x)|,|f^{(a)}(x)|)}{700}\geq \frac{f^{(b)}(x)-f^{(a)}(x)}{1400}\geq \frac{|I_{(x,s),(y,r)}|}{1400}\geq c_1|I_{(x,s),(y,r)}|.
\end{multline*}
This finishes the proof.

\end{proof}

\section{Slow entropy of rank one systems, proof of Proposition \ref{ranon}}\label{sran}
Fix a measurable partition $\cP=\{P_1,...,P_k\}$ of $X$. Let $\epsilon_n\to 0$ and let $(T_n)_{n\in\N}$ be a sequence of towers $\epsilon_n$-monochromatic for $\cP$ (see Definition \ref{rankone}). Denote $(B_n)_{n\in\N}$ and $(H_n)_{n\in\N}$ the sequences of bases and heights for $(T_n)_{n\in\N}$. For simplicity we will denote the measure on $B_n$ by $\nu$ (although the measure depends on $n$). The following lemma implies Proposition \ref{ranon}(see \eqref{bal1}):

\begin{lm}\label{nub} Fix $\epsilon>0$. For $n$ sufficiently large we have 
$$
S_\cP^{\epsilon^2 H_n}(\epsilon,\beta)<\frac{H_n}{t_0(\epsilon)}
$$
for some $t_0(\epsilon)$.
\end{lm}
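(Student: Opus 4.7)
The plan is to exhibit $\lceil H_n/t_0(\epsilon)\rceil$ Hamming balls of controlled radius whose union carries $\nu$-mass at least $\beta-\epsilon$, with $t_0(\epsilon)$ depending only on $\cP$ and $\epsilon$ (crucially \emph{not} on $n$). The centres will be chosen as $w_j:=\phi_{\cP,r}(T_{jt_0}(b_0))$ for a fixed reference base point $b_0\in B_n$, where $r=\epsilon^2 H_n$ and $j=0,1,\ldots,\lceil H_n/t_0\rceil-1$.

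First I would set up the standard tower reduction. By $\epsilon_n$-monochromaticity one has a level coloring $W=W_n\colon[0,H_n]\to\{1,\ldots,k\}$ (the majority atom on $T_s(B_n)$) defined off a Lebesgue-exceptional set of size $\leq\epsilon_n H_n$, and a straightforward Fubini argument shows that for all but a $\sqrt{\epsilon_n}$-fraction of pairs $(b,s)\in B_n\times[0,H_n-r]$, the actual name $\phi_{\cP,r}(T_s(b))$ differs from the canonical level name $N_s(u):=W(s+u)$, $u\in[0,r]$, by at most $\sqrt{\epsilon_n}$ in Hamming distance. So, modulo monochromaticity errors, it suffices to cover $\{N_s:s\in[0,H_n-r]\}$ (weighted by Lebesgue) by Hamming balls.

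The crucial uniform-in-$n$ ingredient is $L^1$-continuity of the Koopman group of $(T_t)$: $\sum_{i=1}^k\mu(P_i\triangle T_\tau P_i)\to 0$ as $\tau\to 0$. Choose $t_0=t_0(\epsilon,\cP)$ so that $\sum_i\mu(P_i\triangle T_\tau P_i)\leq\epsilon^3\beta/4$ for every $|\tau|\leq t_0$; this depends only on the fixed flow and partition, not on $n$. Writing $d_r(N_{jt_0+\tau},N_{jt_0})=\frac{1}{r}\int_0^r\chi_{\{W(jt_0+u+\tau)\neq W(jt_0+u)\}}\,du$, summing over $j$ and integrating over $\tau\in[0,t_0]$, Fubini together with the fact that each $v\in[0,H_n]$ lies in $\approx r/t_0$ of the windows $[jt_0,jt_0+r]$ reduces the calculation to
\[
\int_0^{H_n-r}d_r\bigl(N_s,N_{\lfloor s/t_0\rfloor t_0}\bigr)\,ds\;\leq\;\frac{1}{\nu(B_n)}\sup_{|\tau|\leq t_0}\sum_{i=1}^k\mu(P_i\triangle T_\tau P_i)\;\leq\;\epsilon^3 H_n,
\]
using $\nu(B_n)H_n\geq\beta-\epsilon_n$. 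Markov's inequality then bounds $\{s:d_r(N_s,N_{\lfloor s/t_0\rfloor t_0})>\epsilon\}$ by $\epsilon^2 H_n$ in Lebesgue measure, and so the $\lceil H_n/t_0\rceil$ Hamming balls of radius $2\epsilon$ around the $w_j$ cover $\nu$-measure at least $\beta-O(\epsilon)$; rescaling $\epsilon$ yields the statement.

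The main obstacle is precisely the uniformity of $t_0$ in $n$: an estimate of $d_r(N_s,N_{s'})$ based only on a modulus of continuity of $W_n$ (e.g.\ via Egorov applied to $W_n$ on $[0,H_n]$) would produce a $t_0$ depending on $n$ and ruin the bound. Using $L^1$-continuity of the Koopman group bypasses this, because the modulus of continuity of $\chi_{P_i}$ under the flow is an intrinsic datum attached to the fixed flow $(T_t)$ and the fixed partition $\cP$, independent of the tower index $n$.
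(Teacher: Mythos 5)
Your proposal is correct and follows essentially the same approach as the paper: the uniform-in-$n$ bound $t_0(\epsilon)$ comes from $L^1$-continuity of the Koopman group (this is exactly what the paper's Lemma~\ref{mesflow} encapsulates), while the rank-one/$\epsilon_n$-monochromatic tower structure forces name concentration within levels (this is the paper's Lemma~\ref{eva}, which you reprove via the level-coloring $W_n$ and a Fubini argument rather than citing it). The only cosmetic difference is that you center Hamming balls on the symbolic names $N_{jt_0}$ whereas the paper covers by the measure slabs $T_{t_0 i}(V_n)$ with $V_n=W_{n,t_0}\cap(D_n\times[0,t_0])$; these are the same estimate in dual form.
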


Before we give the proof of Lemma \ref{nub} let us show how it implies Proposition \ref{ranon}.

\begin{proof}[Proof Proposition \ref{ranon}:] Let us fix a generator $\cP$. By Proposition \ref{calc} it is enough to show that $A(\cP,\epsilon,\beta)=0$ (see \eqref{epdel}) for sufficiently small $\epsilon>0$.
To show this, it is enough to show that there exists a sequence $(r_n)_{n\in \N}$, $\lim r_n=+\infty$, such that for every $t>0$ we have 
$$
\lim_{n\to +\infty}\frac{S^{r_n}_\cP(\epsilon,\beta)}{a([r_n],t)}=0.
$$
It is enoguh to take $r_n:=\epsilon^2 H_n$, use Lemma \ref{nub} and $g(n)\to +\infty$.
\end{proof}

So it remains to proof Lemma \ref{nub}, which will follow by the two following lemmas:
\begin{lm}\label{mesflow} For every $\epsilon>0$ there exist $\R_+\ni t_0(\epsilon)=t_0$ such that for every $n\geq 1$ there exists a set $W_{n,t_0}\subset B_n\times [0,t_0]$, $\nu\times \lambda(W_{n,t_0})>(1-\epsilon)t_0\nu(B_n)$, such that for every $(y,t)\in W_{n,t_0}$ we have 
$$
d_{H_n}(y,T_ty)<\epsilon.
$$
\end{lm}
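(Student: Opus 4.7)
The plan is to use strong continuity of the Koopman operator on $L^1$ to pick $t_0 = t_0(\epsilon, \cP)$ so that each atom $P_i$ is $L^1$-close to its translate $T_{-t} P_i$ for every $t \in [0, t_0]$; a Fubini computation using the special-flow structure of the tower then controls the mean of $d_{H_n}(y, T_t y)$ over $(y, t) \in B_n \times [0, t_0]$, and Markov's inequality delivers the set-wise statement.

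Unfolding the coding map,
\[
d_{H_n}(y, T_t y) \;=\; \frac{1}{H_n}\,\lambda\bigl\{s \in [0, H_n] : T_s y \text{ and } T_{s+t} y \text{ lie in different atoms of } \cP\bigr\}.
\]
Since the tower $\bigcup_{0 \leq s \leq H_n} T_s(B_n)$ is measure-isomorphic to $B_n \times [0, H_n]$ (with $\nu \times \lambda$) via $(y, s) \mapsto T_s y$, Fubini yields
\[
H_n \int_{B_n}\!\int_0^{t_0} d_{H_n}(y, T_t y)\, dt\, d\nu(y) \;\leq\; \int_0^{t_0} \sum_{i \neq j} \mu(P_i \cap T_{-t} P_j)\, dt.
\]
Strong continuity of the Koopman group --- a standard consequence of the joint measurability of $(t, x) \mapsto T_t x$ --- applied to the finitely many $\1_{P_i}$ furnishes $t_0$, depending only on $\epsilon$ and $\cP$, for which the integrand on the right is smaller than $\epsilon^3$ throughout $[0, t_0]$. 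Markov's inequality then gives
\[
(\nu \times \lambda)\bigl\{(y, t) \in B_n \times [0, t_0] : d_{H_n}(y, T_t y) \geq \epsilon\bigr\} \;\leq\; \frac{\epsilon^2 t_0}{H_n}.
\]

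By the defining property of the tower, $H_n \nu(B_n) \geq \beta - \epsilon_n$, so for $n$ sufficiently large (and $\epsilon < \beta/2$, which may be assumed without loss) the bound above is at most $\epsilon \cdot t_0 \nu(B_n)$; taking $W_{n, t_0}$ to be the complement in $B_n \times [0, t_0]$ finishes the proof. I do not foresee any genuine obstacle --- everything is bookkeeping --- the one point worth flagging is that $t_0$ must be chosen uniformly in $n$, which is precisely what the strong $L^1$-continuity, applied to the fixed partition $\cP$, provides.
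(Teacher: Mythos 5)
Your proof is correct and follows essentially the same route as the paper's: both pick $t_0$ from the strong $L^1$-continuity of the flow (the paper phrases this as picking $t_0$ so that $\sum_i \mu(T_t P_i \triangle P_i) < \epsilon^4$ for $t \leq t_0$, which is the same fact you invoke), both unfold the tower into $B_n \times [0,H_n]$ and apply Fubini to average the Hamming distance, and both conclude by Markov. The only cosmetic divergence is that the paper works with the auxiliary function $g_t = \sum_i \chi_{T_t P_i \triangle P_i}$ rather than directly with $\sum_{i\neq j}\mu(P_i \cap T_{-t}P_j)$, and it budgets $\epsilon^4$ instead of your $\epsilon^3$; neither difference changes the argument.
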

The proof of  Lemma \ref{mesflow} uses only the existence of {\em Rokhlin towers} for $(T_t)$ which is true for every measurable flow. We have also the following lemma, which uses the rank one structure and can be found in \cite{KS}, Lemma 2.1.
\begin{lm}\label{eva} For every $\epsilon>0$ there exists $n_0$ such that for all $n\geq n_0$ there exists $D_n\subset B_n\subset$, $\nu(D_n)>(1-\epsilon)\nu(B_n)$ and such that for all $x,y\in D_n$, 
$$d_{H_n}(x,y)<\epsilon.$$
\end{lm}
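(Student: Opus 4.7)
The plan is to exploit the $\epsilon_n$-monochromaticity of the tower $T_n$ directly: on most levels almost every base point lands in the same atom of $\cP$, so after pulling bad pairs back to $B_n$ and averaging, Markov's inequality produces the large set $D_n$. Concretely, let $G_n \subset [0,H_n]$ be the set of $\epsilon_n$-monochromatic levels; by definition $\lambda(G_n) \geq (1-\epsilon_n) H_n$, and for each $s \in G_n$ there is an atom $P_{i(s)} \in \cP$ with
\[
E_s := \{x \in B_n : T_s x \notin P_{i(s)}\}, \qquad \nu(E_s) \leq \epsilon_n \nu(B_n).
\]

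For $x \in B_n$ define the set of ``bad'' good heights $E(x) := \{s \in G_n : x \in E_s\}$. A straightforward Fubini computation gives
\[
\int_{B_n} \lambda(E(x)) \, d\nu(x) = \int_{G_n} \nu(E_s) \, ds \leq \epsilon_n \, \nu(B_n) \, H_n.
\]
Then by Markov's inequality, the set
\[
D_n := \bigl\{ x \in B_n : \lambda(E(x)) \leq \sqrt{\epsilon_n} \, H_n \bigr\}
\]
satisfies $\nu(D_n) \geq (1 - \sqrt{\epsilon_n}) \nu(B_n)$, which exceeds $(1-\epsilon)\nu(B_n)$ for all $n$ large enough (since $\epsilon_n \to 0$).

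It remains to bound the Hamming distance on $D_n$. For any $x, y \in D_n$ and any $s \in G_n \setminus (E(x) \cup E(y))$, one has $T_s x, T_s y \in P_{i(s)}$, so the $\cP, H_n$-names of $x$ and $y$ agree at time $s$. Hence the disagreement set is contained in $([0,H_n] \setminus G_n) \cup E(x) \cup E(y)$, which has Lebesgue measure at most $\epsilon_n H_n + 2 \sqrt{\epsilon_n} H_n$. Therefore
\[
d_{H_n}(x,y) \leq \epsilon_n + 2\sqrt{\epsilon_n} < \epsilon
\]
once $n$ is chosen large enough, giving both required conclusions.

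The argument is essentially bookkeeping, so there is no real obstacle — the only point requiring care is distinguishing the two senses in which $\epsilon_n$ appears (the measure of non-monochromatic levels, and the measure of non-dominant points within a monochromatic level), which is why the two contributions to the bad set are estimated separately before being combined. The slight loss from $\epsilon_n$ to $\sqrt{\epsilon_n}$ is inherent in any Markov step of this kind but is harmless since $\epsilon_n \to 0$.
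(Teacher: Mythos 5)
Your argument is correct. Note, however, that the paper does not actually prove this lemma --- it is taken as known, citing it to \cite{KS}, Lemma 2.1 --- so there is no in-paper proof to compare against; the Fubini/Markov argument you give is exactly the standard way to pass from the per-level monochromaticity bound to a pairwise Hamming bound on most of the base, and it is almost certainly the argument in that reference. One small point worth a word if one wants to be fully careful: your Fubini step requires the map $(x,s)\mapsto \chi_{E_s}(x)$ to be jointly measurable, which needs the dominant atom index $i(s)$ to be chosen measurably in $s$; this is routine (e.g.\ take the smallest $i$ achieving the $(1-\epsilon_n)$ threshold), but it should be mentioned.
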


Before we prove Lemma \ref{mesflow}, let us show how the two above lemmas imply Lemma \ref{nub}.

\begin{proof}[Proof of Lemma \ref{nub}] Fix $\epsilon>0$. We use Lemmas \ref{mesflow} and \ref{eva} for $\epsilon^4$. Define $V_n:=W_{n,t_0}\cap (D_n\times [0,t_0])$. By the above Lemmas it follows that for every $i\in [0,...,\frac{H_n}{t_0}]$ and every $x,y \in T_{t_0i}(V_n)$, 
$$d_{\epsilon^2H_n}(x,y)<\epsilon^2.$$
It remains to notice that the collection  $\left(T_{t_0i}(V_n)\right)_{i=0}^{\frac{H_n}{t_0}}$ covers $\beta-\epsilon$ of space.
\end{proof}

So it remains to proof Lemmas \ref{mesflow}.

\begin{proof}[Proof of Lemma \ref{mesflow}]
We will use a special representation for $(T_t)_{t\in \R}$ (see Section \ref{def.rank}). Fix $\epsilon>0$. Since $(T_t)_{t\in\R}$ is measurable, there exists $t_0=t_0(\epsilon)$ such that for every $t\leq t_0$, we have
\begin{equation}\label{equ1}
\epsilon^4> \sum_{i=1}^k\mu(T_tP_i\triangle P_i). 
\end{equation}

Define
$$
g_t(y):=\sum_{i=1}^k\chi_{T_tP_i\triangle P_i}(y).
$$
By \eqref{equ1} we get that for every $t\leq t_0$,

$$\int_{Y^f}g_t(y) d\nu^f<\epsilon^4.$$ 
Therefore,
$$
\int_0^{H_n}\left(\int_{B_n}\left(\int_0^{t_0}g_{t}(T^sx)dt\right)d\nu\right)ds\leq \int_{Y^f} \left(\int_0^{t_0}g_{t}(T^sx)dt\right)d\nu^f<\epsilon^4t_0.
$$
By Fubini's theorem
$$
\int_{B_n\times[0,t_0]}\left(\int_0^{H_n}g_{t}(T^sx)ds\right)d\nu\times dt<\epsilon^4t_0.
$$
Hence there exists a set $W_{n,t_0}\subset B_n\times[0,t_0]$, $\nu\times\lambda(W_{n,t_0})\geq (1-\epsilon)t_0\nu(B_n)$, such that for every $(y,t)\in W_{n,t_0}$
$$
\int_0^{H_n}g_{t}(T^sy)ds<\epsilon^2 H_n.
$$ 
So for $(y,t)\in W_{n,t_0}$
$$
d_{H_n}(y,T_ty)<\epsilon.
$$
This finishes the proof.
\end{proof}

\section{Slow entropy of Arnol'd flows, proof of Theorem \ref{main}}\label{Arn}
In this section we will prove Theorem \ref{main}. Recall that $\a\in \mathcal{E}\cap \mathcal{D}$ and $f$ has asymmetric logarithmic singularities. For simplicity assume that $\int_\T f d\lambda=1$ and $A_i=1$, $B_i=2$ for $i=1,2,3$ (in \eqref{asu}, \eqref{asu2}, \eqref{asu3}). For fixed $m\in\N$ consider the following partition $\cP_m$ of $\T^f$: 
The set $\{(x,s)\;:\; f(x)\geq \log m\}$ is one atom of the partition. Now, partition 
\begin{equation}\label{km0}K_m:=\{(x,s)\;:\; f(x)<\log m\}
\end{equation} 
into squares of diameter between  $\frac{1}{m}$ and $\frac{2}{m}$. Note that the sequence of partitions $(\cP_m)$ is generating. Therefore, in view of Proposition \ref{calc}, Theorem \ref{main} follows by the three propositions below (see \eqref{alm} and \eqref{epdel} for definitions):

\begin{pr}\label{nr1} Fix $\beta\in (0,1]$. There exists a sequence $k_n\to +\infty$ such that for every $m\in\N$, every sufficiently small $\epsilon>0$ and every $t>1$, 
\begin{equation}\label{lem.1}
\lim_{n\to+\infty}
\frac{S^{k_n}_{\cP_m}(\epsilon,\beta)}{k_n(\log k_n)^t}=0.\end{equation}
\end{pr}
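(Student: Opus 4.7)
I take $k_n := q_n$, the $n$-th continued fraction denominator of $\alpha$. Since $\int_\T f\,d\lambda = 1$, the flow time $k_n$ corresponds to roughly $q_n$ iterations of the base rotation, which is precisely the regime where the asymmetric logarithmic singularity produces stretching of order $q_n \log q_n$ in the Birkhoff sums of $f'$ (Lemma \ref{koksi}). Because $(\log k_n)^{1-t} \to 0$ for $t > 1$, the proposition reduces to exhibiting a cover of a large-measure subset of $\T^f$ by at most $C(m,\epsilon)\, q_n \log q_n$ Hamming $d_{k_n}$-balls of radius $\epsilon$.

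\textbf{Step 1 (good set).} Put $\kappa_n := \log n$ and
\[
G_n := \left\{x \in \T : \min_{0 \le j \le q_n} d(x + j\alpha, 0) \ge \frac{\kappa_n}{q_n \log q_n}\right\}.
\]
The singular interval has length $\lesssim \kappa_n/(q_n\log q_n)$ and is seen by at most $q_n$ orbit points, so $\lambda(G_n) \ge 1 - 2\kappa_n/\log q_n \to 1$. By Birkhoff applied to $\chi_{K_m \cap (G_n \times \R)}$ (whose measure tends to $\mu^f(K_m)$ as $n\to\infty$), for all sufficiently large $n$ the set
\[
E_n := \left\{(x,s) \in \T^f : \frac{1}{k_n}\int_0^{k_n} \chi_{K_m \cap (G_n \times \R)}(T^f_u(x,s))\, du > 1 - \epsilon^3\right\}
\]
satisfies $\mu^f(E_n) > 1 - \epsilon$.

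\textbf{Step 2 (rectangular cover).} Partition $\T$ into consecutive arcs $I_1, \ldots, I_N$ of length $\delta_n := \epsilon/(200\, q_n \log q_n)$, so $N \lesssim q_n \log q_n/\epsilon$. Since $\delta_n \ll \kappa_n/(q_n\log q_n)$, whenever $x, y \in I_j \cap G_n$ the entire interval $[x,y]$ inherits the emptiness hypothesis of Lemma \ref{goodcon} with window $R = q_n$; consequently, for every $r \in [0, k_n]$,
\[
|f^{(r)}(x) - f^{(r)}(y)| \le \tfrac{11}{10}\, r \log r\, |x-y| \le \epsilon/100.
\]
Subdivide each $I_j \times [0, \log m]$ into $\lceil \log m / \epsilon \rceil$ horizontal slabs of height $\epsilon$; let $\mathcal{R}$ denote the resulting family of rectangles, so $|\mathcal{R}| \le C(m,\epsilon)\, q_n \log q_n$.

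\textbf{Step 3 (from $\rho^f$-closeness to Hamming agreement).} Choose $m$ large enough that the $\cP_m$-atoms inside $K_m$ have diameter $< \epsilon/10$. Fix $\Pi \in \mathcal{R}$ and two points $(x,s), (y,s') \in \Pi \cap E_n$. For $u \in [0, k_n]$ with $T^f_u(x,s), T^f_u(y,s') \in K_m \cap (G_n \times \R)$, writing both iterates via the Birkhoff cocycle and invoking Step 2 yields $\rho^f(T^f_u(x,s), T^f_u(y,s')) < \epsilon/10$ outside a set of "wrap-around" times of total Lebesgue measure $\lesssim \epsilon k_n$ (during which one orbit has just traversed the roof while the other has not). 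On the remaining $(1 - O(\epsilon))$-portion of $[0, k_n]$ both iterates lie in a common atom of $\cP_m$ (after a harmless generic translation of the partition that avoids boundary coincidences). Hence $\Pi \cap E_n$ is contained in a single $d_{k_n}$-ball of radius $\epsilon$, and
\[
S^{k_n}_{\cP_m}(\epsilon, \beta) \le |\mathcal{R}| + 1 \le C(m,\epsilon)\, k_n \log k_n,
\]
which is $o(k_n (\log k_n)^t)$ for every $t > 1$, establishing \eqref{lem.1}.

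\textbf{Main obstacle.} The technically delicate point is Step 3: converting pseudo-metric closeness into $\cP_m$-name agreement. One must (i) discard the $\epsilon^3$-fraction of time the orbit spends near the singularity or above height $\log m$ (absorbed by $E_n$), and (ii) bound the total duration of "wrap-around" moments, during which $\rho^f$ blows up while $d^f$ remains small, by $O(\epsilon) k_n$. Both contributions are absorbed by the Hamming $\epsilon$-tolerance; this is precisely why the Hamming metric, weaker than Bowen, is the correct notion here.
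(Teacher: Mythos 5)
Your overall strategy matches the paper's: take $k_n = q_n$, exclude a small-measure bad set near the singularity via Birkhoff/Egorov, cover the complement by thin rectangles, and use the Denjoy--Koksma estimate (Lemma~\ref{goodcon}) to show that each rectangle sits inside one Hamming ball, yielding a count of order $q_n \log q_n$ which is $o(q_n(\log q_n)^t)$. Steps 1 and 2 are sound.

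The gap is in Step 3, and it is genuine. You assert that ``the $\cP_m$-atoms inside $K_m$ have diameter $<\epsilon/10$'' after ``choosing $m$ large enough,'' but the proposition must be proven for \emph{every} fixed $m$, and the atoms of $\cP_m$ have diameter on the order of $1/m$, not on the order of $\epsilon$; so for $\epsilon \ll 1/m$ (which is exactly the relevant regime, since $\epsilon \to 0$ with $m$ fixed in the definition of $A^\beta(\cP_m)$), this fails. More importantly, even granting $\rho^f$-closeness of two orbits at scale $\epsilon/10$, this does \emph{not} imply agreement of $\cP_m$-names at any given time: the two iterates may straddle an atom boundary. The ``harmless generic translation of the partition'' does not repair this — the proposition concerns the specific partition $\cP_m$, and a single translation cannot simultaneously avoid boundary coincidences for all pairs of points and all times. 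The correct mechanism, which the paper uses, is measure-theoretic: pick $\xi = \xi(\epsilon,m)$ so that the $\xi$-neighbourhood $V_\xi$ of $\partial\cP_m$ has measure $< \epsilon/10$ (this $\xi$ may be far smaller than $\epsilon$), then use Birkhoff to show that the orbit spends a time-fraction $< \epsilon/3$ in $V_\xi$; away from $V_\xi$, $\rho^f$-closeness at scale $\xi$ does force same-atom, and the small exceptional set is absorbed by the Hamming $\epsilon$-tolerance. This also shows why your rectangle width $\epsilon/(200\,q_n\log q_n)$ is too coarse: it only yields Birkhoff-sum control at scale $\epsilon/100$, independent of $n$, whereas you need control at scale $\xi^2$ (which can be much smaller). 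The paper takes width $1/(q_n(\log q_n)^{t_0})$ with $1 < t_0 < t$, which shrinks relative to $\epsilon/(q_n\log q_n)$ as $n \to \infty$ and gives Birkhoff control $\lesssim (\log q_n)^{-(t_0-1)} \le \xi^2$ for $n$ large; the resulting rectangle count $q_n(\log q_n)^{t_0}/\xi^2$ is still $o(q_n(\log q_n)^t)$ because $t_0 < t$. Finally, the ``wrap-around'' measure bound $O(\epsilon)k_n$ that you invoke is also asserted without proof; in the paper this issue is subsumed in the same $V_\xi$ argument, since the seam $\{(x,f(x))\}\sim\{(Tx,0)\}$ is part of $\partial\cP_m$.
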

Notice that Proposition \ref{nr1} gives the upper bound for $h_s^\beta$. The lower bound is given by the two following Propositions:

\begin{pr}\label{sea}If $(T_t^f)$ satisfies the PD-property, then for  every $\beta\in (0,1]$ there exists $m_0\in \N$ such that for every $m\geq m_0$, every (sufficiently small) $\epsilon>0$ and every $t<1$  
\begin{equation}\label{low.b}\liminf_{r\to+\infty}
\frac{S^{r}_{\cP_m}(\epsilon,\beta)}{r(\log r)^t}=+\infty.
\end{equation}
\end{pr}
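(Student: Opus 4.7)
The idea is to use the PD-property as a bridge between the Hamming (weak) and Bowen (strong) distances: any two points with small $d_r^{\cP_m}$-distance are forced by Proposition~\ref{db} to be $d^f$-close on a subinterval of $[0,r]$ of proportion at least $c_1/10$, and on that subinterval the lower Denjoy--Koksma estimate~\eqref{birk.cont2} collapses their base separation to $O(1/(r\log r))$. Consequently each Hamming ball has $\mu^f$-measure $O(1/(r\log r))$, so at least $\Omega(r\log r)$ of them are needed to cover $\beta-\epsilon$ of $X^f$, which dominates $r(\log r)^t$ for any $t<1$.

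\textbf{Good sets.} Fix $\beta,t,\epsilon$ and pick $\eta\ll\min(c_0^2,c_1^2,\epsilon,\beta)$. Choose $m_0=m_0(\eta)$ so that every atom of $\cP_m$ inside $K_m$ ($m\geq m_0$) has diameter below the threshold $\delta=\delta(\eta)$ of Proposition~\ref{db}; the sequence $(\cP_m)_{m\geq m_0}$ then plays the role of $(\cQ_n)$ there and produces a set $V\subset X^f$ with $\mu^f(V)>1-\eta$ and $R_0$ such that, for $r>R_0$ and $(x,s),(y,s')\in V$ with $d^{\cP_m}_r<\min(c_0^2,c_1^2)$, some interval $[A,B]\subset[0,r]$ of length $L:=B-A\geq c_1r/10$ carries $d^f(T^f_t(x,s),T^f_t(y,s'))<10^{-2}$ throughout. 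For $r$ large pick $n=n(r)$ with $q_n\in[c_1r/10^4,c_1r/10^3]$, set $\Sigma_r:=[-\log n/(q_n\log q_n),\log n/(q_n\log q_n)]$ and
$$
G_r:=\{(x,s)\in X^f: T^ix\notin\Sigma_r\text{ for all }0\leq i\leq 2r\}.
$$
Using that $q_n$ grows at least exponentially in $n$ for every irrational rotation, the union bound gives $\mu^f(X^f\setminus G_r)\leq 2r\,\lambda(\Sigma_r)=O(\log n/\log q_n)<\eta$ for $r$ large.

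\textbf{Narrow Hamming balls and counting.} Take $(x,s),(y,s')\in V\cap G_r$ with $d^{\cP_m}_r<\tfrac12\min(c_0^2,c_1^2)$; Proposition~\ref{db} produces $[A,B]$ as above. Pass to $(x_A,s_A):=T^f_A(x,s)$, $(y_A,s'_A):=T^f_A(y,s')$ and let $M$ be the number of base iterations during time $L$. By Denjoy--Koksma (Lemma~\ref{koksi}-type estimates for log singularities), $M\geq L(1-o(1))$, so $M\in[q_n/1000,r]$. From persistent $d^f$-closeness below $10^{-2}$ on $[0,L]$, after restricting (with a constant-factor loss in $M$) to a subinterval on which a single option of the three-way minimum defining $d^f$ realises it, we deduce the Birkhoff-sum comparison
$$
|f^{(M)}(x_A)-f^{(M)}(y_A)|\leq 1/10.
$$
Because $(x,s),(y,s')\in G_r$, the iterates $T^i[x_A,y_A]$, $0\leq i\leq M$, avoid $\Sigma_r$, so Lemma~\ref{goodcon}~\eqref{birk.cont2} applies with $R=r$ and yields
$$
\tfrac{9}{10}M\log M\,\|x_A-y_A\|\leq 1/10,\qquad\text{i.e.}\qquad\|x_A-y_A\|\leq C/(r\log r).
$$
Combined with $|s_A-s'_A|<10^{-2}$, the image $T^f_A(\mathcal{H}\cap V\cap G_r)$ of any Hamming ball $\mathcal{H}$ of radius $\tfrac12\min(c_0^2,c_1^2)$ lies in a union of three slabs of total $\mu^f$-measure $O(1/(r\log r))$; by $T^f_A$-invariance the same bound holds for $\mathcal{H}\cap V\cap G_r$ itself. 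Since balls covering $\beta-\epsilon$ of $X^f$ cover at least $\beta-\epsilon-2\eta$ of $V\cap G_r$, we conclude
$$
S^r_{\cP_m}\!\bigl(\tfrac12\min(c_0^2,c_1^2),\beta\bigr)\geq\Omega\!\bigl((\beta-\epsilon)\,r\log r\bigr),\qquad\frac{S^r_{\cP_m}(\cdot,\beta)}{r(\log r)^t}\geq\Omega\!\bigl((\log r)^{1-t}\bigr)\to+\infty,
$$
which is \eqref{low.b}.

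\textbf{Main obstacle.} The delicate step is extracting $|f^{(M)}(x_A)-f^{(M)}(y_A)|\leq 1/10$ and then legitimately invoking Lemma~\ref{goodcon}. First, $d^f$ is a minimum of three quantities and the active option may switch with $t$, so one must isolate a subinterval, still of length $\gtrsim L$, on which one option is realised throughout (so that the base iterates of $x_A,y_A$ are honestly indexed by the same integer). Second, applying \eqref{birk.cont2} needs the intervals $T^i[x_A,y_A]$ to avoid $\Sigma_r$; if some of them straddle $0$, the logarithmic blow-up of $f$ would force a large Birkhoff-sum contribution, which one excludes either by a bootstrap (the conclusion $\|x_A-y_A\|\leq C/(r\log r)\ll\lambda(\Sigma_r)$ itself prevents the crossing once both endpoints lie in $G_r$) or by enlarging the singular neighborhood defining $G_r$ at the cost of a controlled measure loss.
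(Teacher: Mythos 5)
Your general strategy matches the paper's: use Proposition~\ref{db} to force a long Bowen-close interval $[A,B]$, then use the lower Denjoy--Koksma estimate~\eqref{birk.cont2} to collapse the base separation to $O(1/(r\log r))$, and finally count. But there is a genuine gap in the counting step that the paper's proof handles and your argument does not: \emph{bounding the integer shift between the base iteration counts.}

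When you pass to $(x_A,s_A)=T^f_A(x,s)$ and $(y_A,s'_A)=T^f_A(y,s')$, the base coordinates are $x+r_1\alpha$ and $y+r_2\alpha$ for integers $r_1,r_2$ that depend on $(y,s')$ (and on $A$, which also depends on $(y,s')$). Your conclusion $\|x_A-y_A\|\leq C/(r\log r)$ therefore reads $\|x-y-(r_2-r_1)\alpha\|\leq C/(r\log r)$, which only localizes $y$ to a thin interval \emph{around $x+(r_1-r_2)\alpha$, for some unknown integer $r_1-r_2$}. A priori $|r_1-r_2|$ can be as large as $O(r)$, so summing over the possible shifts gives only $\mu^f(\mathcal{H}\cap V\cap G_r)=O(1/\log r)$, which yields the useless bound $S^r_{\cP_m}\gtrsim\log r$. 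Your appeal to ``$T^f_A$-invariance'' cannot repair this: $A$ is not a fixed shift but varies with the point, so $T^f_A$ is not a single measure-preserving map of the ball. The paper closes this gap by a separate argument that bounds the shift: from $\rho^f$-closeness at $t_0$ one gets $|f^{(r_1)}(x)-f^{(r_2)}(y)|\leq m_0$, then Corollary~\ref{wniosek2} (using $x,y\in C_r$ and the already-established closeness $\|x-y-(r_2-r_1)\alpha\|\leq\omega_r^7/(r\log r)$) gives $|f^{(-r_1)}(x+r_1\alpha)-f^{(-r_1)}(y+r_2\alpha)|\leq\omega_r^{10}$, and the cocycle identity together with $f\geq\inf_\T f>0$ forces $|r_1-r_2|\leq 2\omega_r^{11}$. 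Only then does the Hamming ball have measure $\leq\omega_r^{20}/(r\log r)$, which is small enough. Your own ``main obstacle'' paragraph correctly flags the issue of which branch of the three-way minimum in $d^f$ is active and the bootstrap avoiding $\Sigma_r$, but the missing shift bound is the more serious omission; without it the measure estimate at the end of your ``Narrow Hamming balls and counting'' step does not follow.

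A secondary remark: the paper's mechanism for localizing at the end of $[A,B]$ is slightly different from yours. It does not apply \eqref{birk.cont2} to the whole interval directly; instead it uses the Birkhoff-equidistribution property (set $W$, inequality \eqref{birksi}) to pick a time $t_0\in[A,B]$ at which \emph{both} orbit points land in $G^f\cap K_{m_0}$ and in the same atom of $\cP_m$, so that $\rho^f$-closeness (not merely $d^f$-closeness) holds at $t_0$; then it invokes Lemma~\ref{mim2}, which bounds $|J_{x+r_1\alpha,y+r_2\alpha}|$ in terms of the separation. This avoids the issue you flag about extracting $|f^{(M)}(x_A)-f^{(M)}(y_A)|\leq 1/10$ directly from $d^f$-closeness; your ``restrict to a subinterval on which one option is realized'' is plausible but would need to be justified.
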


\begin{pr}\label{nr2} $(T_t^f)$ satisfies PD-property.
\end{pr}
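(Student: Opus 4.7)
My plan is to verify the hypothesis of Proposition~\ref{cocy}, which reduces the PD-property to a quantitative cocycle-divergence statement about $\Delta_n := f^{(n)}(x) - f^{(n)}(y)$: I must exhibit, for each $\epsilon>0$, a set $Z_1\subset\T$ with $\lambda(Z_1)>1-\epsilon$ and a threshold $\delta''>0$ such that for all sufficiently close $x,y\in Z_1$ there is a subinterval $K\subset J_{x,y}$ with $|K|\ge|J_{x,y}|/300$ on which $|\Delta_n|>1/400$.

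\textbf{Construction of the good set.} Using $\alpha\in\mathcal{D}\cap\mathcal{E}$ I would define $Z_1$ as the set of $x\in\T$ such that for every orbit length $R$ in a suitable family of dyadic-type scales, some index $n=n(R)$ with $q_n$ comparable to $R$ satisfies the avoidance hypothesis of Lemma~\ref{goodcon}, namely
$$\{x+i\alpha : |i|\le R\}\cap\bigl[-\kappa_n/(q_n\log q_n),\kappa_n/(q_n\log q_n)\bigr]=\emptyset.$$
The measure of $x$'s violating this at a single scale $R\sim q_n$ is, by Chebyshev's inequality and equidistribution of the rotation, of order $\kappa_n/\log q_n$; this is not summable in $n$ on its own, but restricting the chosen scales to the well-behaved denominators indexed by $K_\alpha$ (which by definition of $\mathcal{E}$ carry essentially all the mass, since $\sum_{n\notin K_\alpha}\log^{-7/8}q_n<\infty$) and bundling the intermediate scales into the corresponding good blocks produces a summable tail, so that a Borel--Cantelli argument delivers the required $Z_1$.

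\textbf{The cocycle estimate and extraction of $K$.} Fix $x,y\in Z_1$ with $\|x-y\|<\delta''$, chosen so small that the endpoint $b$ of $J_{x,y}=[a,b]\cap\Z$ exceeds the threshold $R_0$ of Lemma~\ref{goodcon}; by symmetry assume $b\ge|a|$. By maximality $|\Delta_{b+1}|\ge 1/50$, and since the trajectory of $x$ stays far from the singularity (by the avoidance at scale $R=b$), the one-step increment $|f(x+b\alpha)-f(y+b\alpha)|$ is $O(b\|x-y\|)=O(1/\log b)=o(1)$, hence $|\Delta_b|\ge 1/55$ for $\delta''$ small. Lemma~\ref{goodcon}, applied with the index $n$ selected above, then gives on the range $r\in[q_n/1000,b]$ the two-sided estimate
\begin{equation*}
\tfrac{9}{10}\, r\log r\,\|x-y\| \;\le\; |\Delta_r| \;\le\; \tfrac{11}{10}\, r\log r\,\|x-y\|,
\end{equation*}
which combined with $|\Delta_b|\ge 1/55$ forces $b\log b\,\|x-y\|\ge 1/62$. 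Setting $K:=[b/3,b]\cap\Z$, the lower bound together with $\log(b/3)/\log b\to 1$ yields $|\Delta_r|\ge\tfrac{9}{10}\cdot\tfrac{1}{3}\cdot\tfrac{9}{10}\cdot\tfrac{1}{62}>\tfrac{1}{400}$ for all $r\in K$, provided $\delta''$ is small enough. Since $K\subset[0,b]\subset J_{x,y}$ and $|K|\ge 2b/3$ while $|J_{x,y}|\le 2b+1$, the ratio $|K|/|J_{x,y}|$ is at least $1/4$, far exceeding the required $1/300$; the symmetric case $|a|>b$ is handled by reversing time.

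\textbf{Principal difficulty.} The heart of the argument is the construction of $Z_1$: the naive per-scale bad-set estimate of order $\log n/n$ is not summable, and the way around it is to exploit $\alpha\in\mathcal{E}$ to restrict to the denominators in $K_\alpha$ and bundle intermediate scales, echoing the construction of the good set in \cite{FK} where a weaker Ratner-type property was verified. Once $Z_1$ is in hand, everything else is a short cocycle-growth calculation: the existing ingredients (Lemma~\ref{koksi}, Lemma~\ref{goodcon}, and Proposition~\ref{cocy}) combine directly to give the PD-property.
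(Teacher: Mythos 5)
Your overall plan is the same as the paper's: verify the hypothesis of Proposition~\ref{cocy} by building a good set from the avoidance condition of Lemma~\ref{goodcon} and running a cocycle-growth estimate. However, there are two concrete gaps in the execution.

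\textbf{The direction of avoidance is not at your disposal.} You write ``by symmetry assume $b\ge|a|$'' and then place $K=[b/3,b]$ on the forward side; if $|a|>b$ you propose to reverse time. This is not a legitimate symmetry reduction. The hypothesis \eqref{empt2} of Lemma~\ref{goodcon} is a one-sided condition, and for a.e.\ $\alpha$ there are scales (precisely those $n+i\in K_\alpha$) at which the orbit of $x$ passes close to $0$ at a unique time $i_0$; one then only has avoidance on the side of $0$ opposite to $i_0$. Which side that is has nothing to do with whether $b$ or $|a|$ is larger. The paper's proof handles this by choosing the good direction dictated by $i_0$ (WLOG $i_0<0$, so one works forward) and then proving \emph{separately} that the backward endpoint satisfies $|C_{x,y}|<M/2$ via a secondary argument (locating a time $n_0$ with $|\Delta_{n_0}|\ge 1/12$ on an interval that dodges $i_0$). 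Your estimate $|J_{x,y}|\le 2b+1$ relies on $|a|\le b$ and is therefore unjustified; what one actually gets is $|a|\lesssim M/2$ with $M$ defined by $\|x-y\|\approx 1/(M\log M)$, so that $|a|$ is bounded by a constant multiple of $b$, but this requires the full case analysis, not a symmetry shortcut.

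\textbf{The construction of $Z_1$ and the role of $K_\alpha$ are muddled.} You say the ``well-behaved denominators indexed by $K_\alpha$ carry essentially all the mass.'' The logic is the opposite: for $n\in K_\alpha$ no avoidance condition is imposed at all (the moderate growth $q_{n+1}\le q_n\log^{7/8}q_n$ guarantees at most one bad hit in a window of length $q_{n+1}/2$, which is then dodged by choosing a direction). The avoidance condition is imposed only for $n\notin K_\alpha$, with an interval of width $1/(q_n\log^{7/8}q_n)$ so that the excluded set has measure $\sim\log^{-7/8}q_n$, summable over $n\notin K_\alpha$ precisely by the definition of $\mathcal{E}$. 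Moreover the interval $1/(q_n\log^{7/8}q_n)$ used to define $Z_1$ is not the interval $\kappa_n/(q_n\log q_n)$ appearing in \eqref{empt2}; turning membership in $Z_1$ into the hypothesis of Lemma~\ref{goodcon} at the right scale $R$ requires matching indices (the $i$ with $q_{n+i}>10^4 q_n$, the quantity $W_n$, and the subcases A/B of Case~2 in the paper). ``Bundling intermediate scales into good blocks'' does not by itself produce the needed \eqref{empt2} on $[0,b]$, and one must also verify that the extracted $K$ lies in the regime $r\ge q_n/1000$ where the two-sided estimate \eqref{birk.cont2} is available. These scale-matching steps are the substance of the paper's proof of Proposition~\ref{nr2} and are missing from your sketch.

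The arithmetic at the end (the chain $|\Delta_b|\ge 1/55 \Rightarrow b\log b\|x-y\|\ge 1/62 \Rightarrow |\Delta_r|>1/400$ on $[b/3,b]$) is fine on its own, and your instinct that once $Z_1$ is in hand the rest is a short cocycle calculation is correct; but the heart of the proposition is exactly the two items above, and as written your proof does not close them.
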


We will prove each of the above propositions in a separate subsection. The proof of Proposition \ref{nr1} will be conducted simultanously with the proof of Proposition \ref{nr3} (the proofs follow the same lines) in Subsection \ref{pr3}. 

\subsection{Lower bound on orbit growth, proof of Proposition \ref{sea}}
For the proof of Proposition \ref{sea} we need the following lemma (let $\omega_n=\log(\log(n))$):

\begin{lm}\label{mim2} There exists $k_0\in\N$ such that for every $k\geq k_0$ and every $x,y\in \T$ satisfying 
\begin{equation}\label{dxy2}
\frac{\omega^2_{k+1}}{(k+1)\log (k+1)}\leq \|x-y\|<\frac{\omega^2_k}{k\log k}, 
\end{equation}
we have $|J_{x,y}|<k\omega_k^3$.
\end{lm}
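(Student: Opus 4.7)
The plan is to argue by contradiction. Set $r:=k\omega_k^3$ and suppose $r\in J_{x,y}$, so that $|f^{(n)}(x)-f^{(n)}(y)|<\tfrac{1}{50}$ for every $|n|\leq r$. We seek a contradiction for $k$ sufficiently large. Heuristically, for $\alpha\in\mathcal{D}$ the difference $|f^{(n)}(x)-f^{(n)}(y)|$ should grow like $n\log n\cdot\|x-y\|$; at $n=r$ with $\|x-y\|\asymp\omega_k^2/(k\log k)$ this heuristic gives $\asymp\omega_k^5$, far exceeding $1/50$. The task is to promote this heuristic to a rigorous lower bound by invoking Lemma~\ref{goodcon}.

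Let $n=n(k)$ be the unique integer with $q_n\leq r<q_{n+1}$. Because $\alpha\in\mathcal{D}$, $q_n$ and $r$ are comparable up to polylogarithmic factors; in particular $r\in[q_n/1000,r]$, so the lower bound in Lemma~\ref{goodcon} is applicable with $R=r$ as soon as its avoidance hypothesis holds. \emph{Case A.} If the orbit $\bigcup_{i=0}^r T^i[x,y]$ avoids the singular neighborhood $\mathcal{I}_n:=[-\kappa_n/(q_n\log q_n),\kappa_n/(q_n\log q_n)]$, Lemma~\ref{goodcon} gives
\[
|f^{(r)}(x)-f^{(r)}(y)|\;\geq\;\tfrac{9}{10}\,r\log r\,\|x-y\|\;\geq\;\tfrac{9}{10}\cdot k\omega_k^3\log(k\omega_k^3)\cdot\tfrac{\omega_{k+1}^2}{(k+1)\log(k+1)}\;\gtrsim\;\omega_k^5,
\]
which contradicts $r\in J_{x,y}$ for $k\geq k_0$.

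\emph{Case B.} Otherwise, let $m\leq r$ be the first time at which $T^m[x,y]$ meets $\mathcal{I}_n$. The hypothesis on $\|x-y\|$ yields $\|x-y\|\ll |\mathcal{I}_n|/2\asymp 1/r$, so both $T^mx$ and $T^my$ lie within distance $O(1/r)$ of the singularity. If $T^mx$ and $T^my$ sit on opposite sides of $0$, the asymmetry $A_1=1\neq 2=B_1$ of the logarithmic singularity forces a single-step jump $|f(T^mx)-f(T^my)|\asymp\log(1/\|x-y\|)\asymp\log k\gg 1/50$, contradicting $m+1\in J_{x,y}$. If they lie on the same side of $0$, one iterates: one combines the small jump at step $m$ with the Lemma~\ref{goodcon} lower bound on the avoidance sub-intervals between successive close passages of the orbit, using the polylogarithmic control of $q_{n+1}/q_n$ from $\alpha\in\mathcal{D}$ to bound the number of such passages. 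The main obstacle is this ``tangential'' subcase: a single same-side close passage contributes only $\|x-y\|/\mathrm{dist}\asymp\omega_k^5/\log k$, which on its own falls short of $1/50$. Nevertheless, because the claimed bound $k\omega_k^3$ on $|J_{x,y}|$ is much larger than its heuristic value $\asymp k/\omega_k^2$, there is ample slack, and accumulating the Lemma~\ref{goodcon} lower bounds over the avoidance intervals (whose total length is comparable to $r$) yields the required contradiction.
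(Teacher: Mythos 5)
Your Case A is correct and is one of the ingredients, but Case B is where the actual work lies and your proposal does not resolve it. The plan to ``accumulate the Lemma~\ref{goodcon} lower bounds over the avoidance intervals'' does not close the argument: $f^{(r)}(x)-f^{(r)}(y)$ is a signed quantity, and the increments contributed during close passages to the singularity can partially cancel whatever is built up on the avoidance stretches. You cannot simply add up the avoidance contributions without also controlling the size \emph{and sign} of what happens in between, and as you yourself observe, a single same-side close passage can contribute as little as $\asymp\omega_k^2\,(q_n\log q_n)/(k\log k)\cdot\kappa_n^{-1}$, which is $O(1)$ or even $o(1)$ when $q_{n+1}/q_n$ is large. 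The opposite-side subcase is also a red herring here (and its resolution in your sketch appeals to asymmetry in a way that is not actually needed).

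The idea the paper uses, and which your proposal is missing, is to replace ``avoidance on all of $[0,r]$'' by ``avoidance on a single long sub-block.'' Concretely, with $n$ chosen so that $q_n\le k<q_{n+1}$ and $R_{n,k}:=\tfrac12\min(k\omega_k^3,q_{n+1})$, one produces an interval $[M_1,M_2]\subset[0,R_{n,k}]$ with $M_2-M_1\ge\max(\tfrac18 R_{n,k},\,q_n)$ on which $\bigcup_{i=M_1}^{M_2}T^i[x,y]$ misses the bad neighborhood $I^n_{bad}$. This is a pure return-time/spacing argument: hits of $I^n_{bad}$ occur at times that are (essentially) multiples of $q_n$, consecutive $q_n$-returns shift $I^n_{bad}$ by $\|q_n\alpha\|\approx 1/q_{n+1}$, and the Diophantine condition $\alpha\in\mathcal{D}$ guarantees that within $[0,R_{n,k}]$ there is a block of $\ge R_{n,k}/8$ consecutive times free of hits. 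One then applies Lemma~\ref{goodcon} to the shifted points $x+M_1\alpha$, $y+M_1\alpha$ over this block to get
$$
\bigl|f^{(M_2-M_1)}(x+M_1\alpha)-f^{(M_2-M_1)}(y+M_1\alpha)\bigr|\gtrsim R_{n,k}\log R_{n,k}\,\|x-y\|\ge\omega_k\gg 1,
$$
and the cocycle identity converts this into $\max\bigl(|f^{(M_1)}(x)-f^{(M_1)}(y)|,\,|f^{(M_2)}(x)-f^{(M_2)}(y)|\bigr)>1$, forcing $B_{x,y}<R_{n,k}\le\tfrac12 k\omega_k^3$ (and symmetrically for $A_{x,y}$). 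The point is that this argument never needs to say anything about what $f^{(\cdot)}(x)-f^{(\cdot)}(y)$ does during the close passages; it only needs one long clean window, which is exactly what the Diophantine hypothesis provides.
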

\begin{proof} Recall that $0\in J_{x,y}=[A_{x,y},B_{x,y}]$. We will show that 
$$
|A_{x,y}|,|B_{x,y}|<\frac{1}{2}k\omega_k^3.
$$
We will show the above inequality for $B_{x,y}$ (the proof for $A_{x,y}$ follows the same lines).
Let $n\in \N$ be unique such that $q_n\leq k<q_{n+1}$. Let 

$$I^n_{bad}:=\left[-\frac{2\omega_{q_n}}{q_n\log q_n},\frac{2\omega_{q_n}}{q_n\log q_n}\right].$$

Let $R_{n,k}:=\frac{1}{2}\min(k\omega_k^3,q_{n+1})$. We claim that there exists an interval $[M_1,M_2]\subset [0,R_{n,k}]$ such that $|M_2-M_1|\geq \max(\frac{1}{8}R_{k,n},q_n)$ and
\begin{equation}\label{costam}
\bigcup_{M_1}^{M_2}T^i[x,y]\cap I^n_{bad}=\emptyset.
\end{equation}
Indeed, notice that for $0\leq i<\frac{q_{n+1}}{2q_n}$, we have
$$
T^{iq_n}(I^n_{bad})=I^n_{bad}+i (q_n\a \; \text{mod} \;1).
$$
Therefore for $i\geq \frac{4q_{n+1}\omega_{q_n}}{q_n\log q_n}$ 
$$
T^{iq_n}(I^n_{bad})\cap I^n_{bad}=\emptyset.
$$
Obviously for $0<j<\frac{q_{n+1}}{2}$ not divisible by $q_n$ we have 
\begin{equation}\label{disj}
T^{j}(I^n_{bad})\cap I^n_{bad}=\emptyset.
\end{equation}
It remains to notice that by diophantine assumptions on $\alpha$ we have 
$$\frac{R_{n,k}}{16q_n}\geq \frac{4q_{n+1}\omega_{q_n}}{q_n\log q_n}$$
 to get that for every $j\in [\frac{R_{n,k}}{16},R_{n,k}]$, \eqref{disj} holds. This gives \eqref{costam}. 

By \eqref{costam}, it follows that \eqref{empt2} holds for $R=M_2-M_1$ and $x+M_1\alpha, y+M_1\alpha$ ($\kappa_n=\omega_{q_n}$ modulo additive constant). Therefore, by \eqref{birk.cont2} for $r=M_2-M_1$
\begin{multline*}
|f^{(M_2-M_1)}(x+M_1\a)-f^{(M_2-M_1)}(y+M_1\a)|\geq \\
\frac{9}{10}|M_2-M_1|\log(M_2-M_1)\|x-y\|\geq \frac{1}{64}R_{n,k}\frac{\omega^2_k}{k\log k}\geq \omega_k.
\end{multline*}
By cocycle identity, we have 
$$
\max\left( |f^{(M_1)}(x)-f^{(M_1)}(y)|,|f^{(M_2)}(x)-f^{(M_2)}(y)\right)>1.
$$
Hence $B_{x,y}\leq M_2<R_{n,k}$ and this finishes the proof.
\end{proof}

Using the above lemma, we can prove Proposition \ref{sea}:

\begin{proof}[Proof of Proposition \ref{sea}] 
Fix $\beta\in (0,1]$ and let $c_0,c_1$ come from the PD-property.
We will use Proposition \ref{db} for the sequence of partitions $(\cP_m)_{m\in\N}$. Define $\eta= 10^{-6}\min(\beta^{10},c_0^{10},c_1^{10})$ and let $m_0=m_0(\eta^4), R_0=R_0(\eta^4),V(\eta^4)$ come from Proposition \ref{db}. Let $k_0$ come from Lemma \ref{mim2}. By enlarging $m_0$ we can assume that $m_0\gg \max(\eta^{-1},k_0)$. Fix any  $\epsilon<m_0^{-3}$ and $m\geq m_0$. We will show that for $r$ sufficiently large, we have
\begin{equation}\label{boun}
S^{r}_{\cP_m}(\epsilon,\beta)\geq \frac{r\log r}{\omega_r^{20}}.
\end{equation}
This will finish the proof of \eqref{low.b}. By Egorov's theorem, there exists a set $U$, $\mu(U)>1-\eta^4$ and $M_0=M_0(\eta)$ such that for every $x\in U$ and $M\geq M_0$
\begin{equation}\label{eg1}
\frac{1}{2}M<f^{(M)}(x)<2M.
\end{equation}
Define $G^f=U^f\cap K_{m_0}$ (see \eqref{km0})

 Since $m_0\gg \eta^{-1}$ it follows that $\mu(G^f)\geq 1-\eta^3$ (see \eqref{km0}). By Egorov's theorem for $\chi_{G^f}$, there exists a set $W\subset \T^f$,  $\mu(W)\geq 1-3\eta$ and $R_1=R_1(\eta)\in \N$ such that for every $(x,s)\in W$ and $R\geq R_1$ we have
\begin{equation}\label{birksi}
\left|\left\{t\in [0,R]\;:\; T_t^f(x,s)\in G^f\right\}\right|\geq (1-4\eta)R.
\end{equation}
 Fix $r\gg \max(R_0,R_1,m_0)$ and define 
\begin{equation}\label{cyr}
C_r:=\T\setminus\bigcup_{i=0}^{\frac{2r}{\inf_\T f} }\left[-\frac{1}{r\omega_r}-i\alpha,
\frac{1}{r\omega_r}-i\alpha\right].
\end{equation}

Notice that since $r\gg m_0\gg \eta$, we have $\mu(C_r^f)>1-\frac{1}{\omega_r\inf_\T f}\geq 1-\eta$ and therefore  $\mu(G^f\cap V\cap W\cap C_r^f)\geq 1-10\eta$. Take any $(x,s),(y,s')\in G^f\cap V\cap W\cap C^f_r$  such that 
\begin{equation}\label{sba}
(y,s')\in B^{\cP_m}_r((x,s),\epsilon). 
\end{equation}
This means that
\begin{equation}\label{hamd}d^{\cP_m}_r((x,s),(y,s'))<\epsilon< \min(c_0^2,c_1^2).\end{equation} 

So by Proposition \ref{db}, there exists an interval $[A,B]\subset [0,r]$, $B-A>\frac{c_1r}{10}$, such that for every $t\in [A,B]$ we have
$$
d^f\left(T^f_t(x,s),T^f_t(y,s')\right)<10^{-2}.
$$

By \eqref{birksi}, \eqref{hamd} and the fact that  $B-A>\frac{c_1r}{10}$ it follows that there exists $t_0\in [A,B]$ such that  
\begin{equation}\label{compl}T^f_{t_0}(x,s),T^f_{t_0}(y,s')\in G^f
\end{equation}
 and $T^f_{t_0}(x,s),T^f_{t_0}(y,s')$ are in one atom of $\cP_m$. 
By the definition of $G^f$ and the partition $\cP_m$ this implies in particular that ($m_0\gg k_0$)
\begin{equation}\label{cjmp}\rho^f(T^f_{t_0}(x,s),T^f_{t_0}(y,s'))<
\frac{2}{m_0}\leq \frac{1}{k_0\log k_0}.\end{equation}

 Let $r_1, r_2\in \N$ be such that the first coordinates of $T^f_{t_0}(x,s),T^f_{t_0}(y,s')$ are respectively $x+r_1\alpha$ and $y+r_2\alpha$. Notice that since $t_0\leq r$, we have $r_1,r_2\leq r\inf_\T f$. Moreover by \eqref{compl}, we get that 
\begin{equation}\label{eq12}
x+r_1\alpha,y+r_2\alpha\in U
\end{equation}
 and by \eqref{cjmp}, we have $\|x+r_1\alpha-(y+r_2\alpha)\|<\frac{1}{k_0\log k_0}$. 

Consider the interval $J_{x+r_1\alpha,y+r_2\alpha}$ and denote the endpoints of $J_{x+r_1\alpha,y+r_2\alpha}$ by $C_{x,y}<0$ and $D_{x,y}>0$.  Using Lemma \ref{mim2},  
we get 
$$
D_{x,y}-C_{x,y}\leq k\omega_k^3,
$$
where $k$ satisfies \eqref{dxy2} for $x+r_1\alpha, y+r_2\alpha$.
By the definition of $[A,B]$ and $J_{x+r_1\alpha,y+r_2\alpha}$ (see \eqref{jxy}) it follows that 
$$
[A-t_0,B-t_0] \subset [f^{(C_{x,y})}(x+r_1\alpha),f^{(D_{x,y})}(x+r_1\alpha)].
$$
Therefore, by \eqref{eq12} and \eqref{eg1}, we have
\begin{multline}\label{est.in}
\frac{c_1r}{10}\leq B-A\leq 2\max(|A-t_0|,|B-t_0|)\leq\\
2\max\left(\left|f^{(C_{x,y})}(x+r_1\alpha)\right|,
\left|f^{(D_{x,y})}(x+r_1\alpha)\right|\right)
\leq 4\max (|C_{x,y}|,|D_{x,y}|)<\\
4(D_{x,y}-C_{x,y})\leq 4k\omega_k^3
\end{multline}
This  by \eqref{dxy2} (and $r\gg c_1^{-1}$) implies that 
\begin{equation}\label{fgh}
\|x-y-(r_2-r_1)\alpha\|\leq \frac{\omega_k^2}{k\log k}\leq \frac{\omega_r^7}{r\log r}.
\end{equation}
Notice, that since $(x,s), (y,s')\in G^f\subset K_{m_0}$ we get $|s-s'|\leq m_0-1$. Therefore and by \eqref{cjmp}, we get
\begin{multline}\label{bsum}
|f^{(r_1)}(x)-f^{(r_2)}(y)|\leq \|(x+r_1\alpha)-(y+r_2\alpha)\|+\\
|f^{(r_1)}(x)-f^{(r_2)}(y)+s-s'|+m_0-1= 
\rho^f(T^f_{t_0}(x,s),T^f_{t_0}(y,r))+m_0-1\leq m_0.
\end{multline}
Moreover since $r_1<r\inf_\T f$, $x+r_1\alpha,y+r_2\alpha\in C_r$ (see \eqref{compl}) and \eqref{fgh} holds, we get that the assumptions of Corollary \ref{wniosek2} are satisfied. Therefore, using \eqref{fgh}, we get   
\begin{equation}\label{bsum2}
|f^{(-r_1)}(x+r_1\alpha)-f^{(-r_1)}(y+r_2\alpha)|\leq \omega_r^{10}.
\end{equation}
Using \eqref{bsum} and \eqref{bsum2}, we get

\begin{multline*}
m_0\geq |f^{(r_1)}(x)-f^{(r_2)}(y)|=|f^{(-r_1)}(x+r_1\alpha)-
f^{(-r_1)}(y+r_2\alpha)+\\
f^{(r_1-r_2)}(y+r_2\alpha)|\geq 
|f^{(r_1-r_2)}(y+r_2\alpha)|+ \omega_r^{10},
\end{multline*} 
which implies that $|r_1-r_2|\leq 2\omega_r^{11}$ ($r\gg m_0$). By this and \eqref{fgh} we get that if $(y,s')$ and $(x,s)$ satisfy \eqref{sba}, then 
$$
(y,s')\in \left(\bigcup_{i=- 2\omega_r^{11}}^{ 2\omega_r^{11}}
\left[x+i\a-\frac{\omega_r^7}{r\log r},x+i\a-\frac{\omega_r^7}{r\log r}\right]\right)^f.
$$
Therefore, for every $(x,s)\in G^f\cap V\cap W\cap C_r^f$
$$
\mu(B^{\cP_m}_r((x,s),\epsilon)\cap G^f\cap V\cap W\cap C^f_r)\leq \frac{\omega_r^{20}}{r\log r}.
$$
Now since $r\gg m$, and $\mu(G^f\cap V\cap W\cap C^f_r)\geq 1-10\eta \gg 1-\beta$ this implies that \eqref{boun} holds. This finishes the proof of Proposition \ref{sea}.
\end{proof}

\subsection{PD-property for Arnol'd flows, proof of Proposition \ref{nr2}}

To prove Proposition \ref{nr2} we will show that Proposition \ref{cocy} holds. In proving that Proposition \ref{cocy} holds, we will be using the full strength of the diophantine condition on $\alpha\in \mathcal{D}\cap \mathcal{E}$.

Recall that $(\kappa_n)$ is going slowly to $+\infty$ (i.e. $\kappa_n=\log n$).

\begin{proof}[Proof of Proposition \ref{nr2}]

Fix $\epsilon>0$. Let 
$$
Z_n:=\bigcup_{i=-q_n}^{q_n}T^i
\left[-\frac{1}{q_n\log^{7/8}q_n},\frac{1}{q_n\log^{7/8}q_n}\right]
$$
and (for $k\geq 2$)
\begin{equation}\label{defz1}
Z_{k}:=\bigcup_{n\geq k, n\notin K_\alpha}Z^c_n.
\end{equation}
Notice that $\lambda(Z_{k})\geq 1-
2\sum_{n\geq k,n\notin K_\alpha}\frac{1}{\log^{7/8}q_n}$. 
Define $Z_1:=Z_{k_0}$ where   $k_0$ is such that $\lambda(Z_{k_0})\geq 1-\epsilon^2$ (such $k_0$ exists since $\alpha\in \mathcal{E}$). Let $\delta''\ll q_{k_0}^{-1}$ and take any $x,y\in Z_1$ such that $\|x-y\|<\delta''$.
Let $n\geq k_0$ be such that 
\begin{equation}\label{distxy}
\frac{1}{q_{n+1}\log q_{n+1}}<\|x-y\|\leq \frac{1}{q_{n}\log q_{n}} 
\end{equation} 
and $M\in [q_n,q_{n+1})\cap \N$ be unique such that 
\begin{equation}\label{distxy2}
\frac{1}{(M+1)\log (M+1)}<\|x-y\|\leq \frac{1}{M\log M}. 
\end{equation}
Let moreover $i\in \N$ be the smallest number such that $q_{n+i}>10^4q_n$. Denote the endpoints of $J_{x,y}$ by $C_{x,y}\leq 0$ and $D_{x,y}\geq 0$. We consider two cases:

\textbf{Case 1.} $n+i\in K_\alpha$. Notice that the orbit of length $q_{n+i}$ of any point $z_0\in\T$ is at least $\frac{1}{2q_{n+i}}\geq \frac{1}{2q_{n+i-1}\log q^{7/8}_{n+i-1}}$ spaced. Therefore there is at most one 
$i_0\in \left(-\frac{q_{n+i}}{2},\frac{q_{n+i}}{2}\right)\cap \Z$ such that 
\begin{equation}\label{badpoint}
x+i_0\alpha\in [-\frac{1}{4q_{n+i-1}\log^{7/8}q_{n+i-1}},\frac{1}{4q_{n+i-1}\log^{7/8} q_{n+i-1}}].
\end{equation}
Assume WLOG that $i_0<0$. Then by the definition of $i$, \eqref{empt2} is satisfied for $R=\frac{q_{n+i}}{2}$, $n\in \N$ and $x,y\in \T$ (with $\kappa_n=\log n$). Define $K_{x,y}:=[\frac{M}{200},\frac{M}{100}]\cap \Z$. By \eqref{birk.cont2} and \eqref{distxy2}, for every $r\in K_{x,y}$, ($r\geq M/200\geq q_n/200$) we get
$$
|f^{(r)}(x)-f^{(r)}(y)|\geq  \frac{9}{2000}M\log M\|x-y\|\geq 1/400. 
$$
Moreover by \eqref{birk.cont}, \eqref{birk.cont2} and \eqref{distxy2},
 for every $r\in [0, \frac{M}{100}]$, we get 
$$
|f^{(r)}(x)-f^{(r)}(y)|\leq \frac{11}{1000} M\log M\|x-y\| <1/80. 
$$
Finally, by \eqref{birk.cont2} for $\tilde{M}=[M/2]<\frac{q_{n+i}}{2}$ and \eqref{distxy2},
 we get
$$
|f^{(\tilde{M})}(x)-f^{(\tilde{M})}(y)|\geq  \frac{8}{20} M\log M \|x-y\|\geq 1/10. 
$$
By the three equations above, we get $|K_{x,y}|\geq \frac{M}{200}$, $K_{x,y}\subset J_{x,y}$ and $D_{x,y}<M/2$. We will show that 
\begin{equation}\label{cxy}
|C_{x,y}|<M/2.
\end{equation}
This will finish the proof since then $|K_{x,y}|\geq\frac{|J_{x,y}|}{200}$. Therefore it remains to show \eqref{cxy}. For this aim we will show that there exists $-M/2<n_0<0$ such that $|f^{(n_0)}(x)-f^{(n_0)}(y)|\geq 1/12$ (then $|C_{x,y}|\leq|n_0|<M/2$). Let $-\frac{q_{n+i}}{2}<i_0<0$ satisfy \eqref{badpoint} (if such $i_0$ does not exist, let $i_0=0$). Consider the longer of the intervals $[-M/2,i_0-1]$ and $[i_0+1,0]$ and call it $I_M=[A,B]$. Then $|I_M|\geq M/4-1$. Moreover by the definition of $i_0$ it follows that (for $\kappa_n=\log n$), we have
$$
\bigcup_{i=0}^{A-B}T^{i}[x+B\alpha,y+B\alpha]\cap
\left[-\frac{\kappa_n}{q_n\log q_n},\frac{\kappa_n}{q_n\log q_n}\right]=\emptyset.
$$
So \eqref{empt2} is satisfied with $R=A-B$ and $x+B\alpha,y+B\alpha\in \T$. 
Using \eqref{birk.cont2} with $r=A-B$, we get 
\begin{multline*}
|f^{(A-B)}(x+B\alpha)-f^{(A-B)}(y+B\alpha)|\geq\\ \frac{9}{10}|A-B|\log|A-B|\frac{1}{M+1\log(M+1)}\geq 1/6.
\end{multline*}
Using cocycle identity, we have
$$
\max\left(|f^{(A)}(x)-f^{(A)}(y)|,|f^{(B)}(x)-f^{(B)}(y)|\right)\geq 1/12.
$$
We conclude by setting $n_0$ to be either $A$ or $B$ depending on which number above obtains the maximum. This finishes the proof of \textbf{Case 1.}

\textbf{Case 2.} $n+i\notin K_\alpha$. Let $W_n:=\frac{q_{n+i}}{3\log^{7/8}q_{n+i-1}}$. Since $x,y\in Z_1$, \eqref{distxy2} is satisfied,  $n+i\gg k_0$ and $q_{n+i-1}<10^4q_n$, we get 
\begin{equation}\label{mn}
\bigcup_{i=-W_n}^{W_n}T^{i}[x,y]\cap\left[-\frac{\kappa_n}{q_n\log q_n},\frac{\kappa_n}{q_n\log q_n}\right]=\emptyset.
\end{equation}
We will consider two cases:

\textbf{A.} $\|x-y\|\geq \frac{1}{30W_n\log W_n}$. Then, by \eqref{distxy2}, we have $W_n \geq M/30$. Define $K_{x,y}:=[\frac{M}{200},\frac{M}{100}]\cap \Z$. 

By \eqref{mn}, we get that \eqref{empt2} is satisfied for $M/100\leq W_n$. So by \eqref{birk.cont2}, for every $r\in K_{x,y}$, we have
$$
|f^{(r)}(x)-f^{(r)}(y)|\geq  \frac{1}{250}M\log M\|x-y\|\geq \frac{1}{400}. 
$$
Moreover, by \eqref{birk.cont} and \eqref{birk.cont2}, for every $r\in [0,\frac{M}{100}]$,  we have 
$$
|f^{(r)}(x)-f^{(r)}(y)|\leq \frac{11}{1000} M\log M\|x-y\| <1/80. 
$$
Finally, by \eqref{birk.cont2} for $\tilde{M}\in \{-[\frac{M}{30}],[\frac{M}{30}]\}$ (note that $|M|/30\leq W_n$)
$$
|f^{(\tilde{M})}(x)-f^{(\tilde{M})}(y)|\geq  \frac{9}{300} M\log M \|x-y\|\geq 1/40. 
$$
Puting this together, we get $|K_{x,y}|\geq \frac{M}{200}$, $K_{x,y}\subset J_{x,y}$ and $|J_{x,y}|\leq |C_{x,y}|+|D_{x,y}|\leq M/30+M/30=M/15$. This finishes the proof in case \textbf{A.}

\textbf{B.} $\|x-y\|< \frac{1}{30W_n\log W_n}$ (then $M\geq 30W_n$). 

Since $x,y\in Z$, it follows that one of the following holds:
 \begin{equation}\label{fc}
\bigcup_{i=0}^{[\frac{q_{n+1}}{2}]}T^{i}[x,y]\cap\left[-\frac{\kappa_n}{q_n\log q_n},\frac{\kappa_n}{q_n\log q_n}\right]=\emptyset
\end{equation}
 or
\begin{equation}\label{sc}
\bigcup_{i=0}^{-[\frac{q_{n+1}}{2}]}T^{i}[x,y]\cap\left[-\frac{\kappa_n}{q_n\log q_n},\frac{\kappa_n}{q_n\log q_n}\right]=\emptyset.
\end{equation}
Indeed, whether \eqref{fc} of \eqref{sc} holds depends on the sign of $\alpha-\frac{p_n}{q_n}$ and the position of the point closest to $0$ of the orbit of $x$ of length $q_n$. It follows that going either forward or backward we can always recede from the dangerous interval around $0$. A complete proof of this statement is given in \cite{FK} Lemma 4.6.

We will conduct the proof in case \eqref{fc} holds (the proof if \eqref{sc} holds is analogous). Define $K_{x,y}:=[\frac{M}{200},\frac{M}{100}]\cap \Z$. Notice that \eqref{fc} implies that \eqref{empt2} holds for $R=q_{n+1}/2$.
So by \eqref{birk.cont2} and \eqref{distxy2}, for every $r\in K_{x,y}$, we have 
$$
|f^{(r)}(x)-f^{(r)}(y)|\geq  \frac{1}{250}M\log M\|x-y\|\geq 1/400. 
$$
Moreover, by \eqref{birk.cont} and \eqref{birk.cont2}, for every $r\in [0,\frac{M}{100}]$, we have 
$$
|f^{(n)}(x)-f^{(n)}(y)|\leq \frac{11}{1000} M\log M\|x-y\| <1/80. 
$$
Finally, by \eqref{birk.cont} and \eqref{distxy2} for $\tilde{M}=[\frac{M}{2}]$ we have
$$
|f^{(\tilde{M})}(x)-f^{(\tilde{M})}(y)|\geq  8/20 M\log M \|x-y\|\geq 1/8. 
$$
By the three equation above, we get $|K_{x,y}|\geq \frac{M}{200}$, $K_{x,y}\subset J_{x,y}$ and $D_{x,y}<M/2$. We will show that 
\begin{equation}\label{cxy2}
|C_{x,y}|<M/2.
\end{equation}
this will finish the proof since then $K_{x,y}\geq \frac{|J_{x,y}|}{200}$. For this aim we will show that there exists $-M/2<n_0<0$ such that $|f^{(n_0)}(x)-f^{(n_0)}(y)|\geq 1/16$. Consider intervals $I_1=[-M/6,0], I_2=[-M/3,-M/6], I_3=[-M/2,-M/3]$. Since $M\geq 30 W_n$, it follows that there exists $i\in\{1,2,3\}$ such that for every $r\in I_i$, we have 
$$
T^r[x,y]\cap \left[-\frac{\kappa_n}{q_n\log q_n},\frac{\kappa_n}{q_n\log q_n}\right]=\emptyset.
$$
Denote this interval by $[A,B]$. Then $|A-B|\geq M/6$ and  we have
$$
\bigcup_{i=0}^{A-B}T^{i}[x+B\alpha,y+B\alpha]\cap\left[-\frac{\kappa_n}{q_n\log q_n},\frac{\kappa_n}{q_n\log q_n}\right]=\emptyset.
$$
Hence \eqref{empt2} is satisfied for $R=A-B$ and $x+B\alpha, y+B\alpha\in\T$. By \eqref{birk.cont2} with $r=A-B$, we get 
\begin{multline*}
|f^{(A-B)}(x+B\alpha)-f^{(A-B)}(y+B\alpha)|\geq \\
9/10|A-B|\log|A-B|\frac{1}{M+1\log(M+1)}\geq 1/8.
\end{multline*}
Using cocycle identity, we have
$$
\max\left(|f^{(A)}(x)-f^{(A)}(y)|,|f^{(B)}(x)-f^{(B)}(y)|\right)\geq 1/16.
$$
We conclude by setting $n_0$ to be either $A$ or $B$ depending on which number above obtains the maximum. We get $|C_{x,y}|\leq M/2$. This finishes the proof of \textbf{B.} and hence also the proof of Proposition \ref{cocy}.
\end{proof}

\section{Slow entropy of Kochergin flows, proof of Theorem \ref{main2}}\label{Koc}
Le $(\cP_m)$ be the sequence of partitions of $\T^f$ defined in Section \ref{Arn}
Similarly to Section \ref{Arn}, Theorem \ref{main2} follows by the following two propositions (see \eqref{alm} and \eqref{epdel}):

\begin{pr}\label{nr3} Fix $\beta\in (0,1]$.
 There exists a sequence $k_n\to +\infty$ such that for any $m\in \N$ , any sufficiently small $\epsilon>0$ and any $t>1+|\gamma|$, 
$$\lim_{n\to+\infty}
\frac{S^{k_n}_{\cP_m}(\epsilon,\beta)}{k_n^t}=0.$$
\end{pr}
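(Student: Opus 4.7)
The plan is to establish the upper bound by counting Bowen-type balls in the pseudo-metric $d^f$ and then translating to Hamming balls, using the standard observation that two orbits which are $d^f$-close throughout $[0,r]$ and avoid partition boundaries are automatically Hamming close. The exponent $1+\gamma$ enters through the derivative cocycle estimate \eqref{koks4} of Lemma \ref{koksi2}: since $|f'(u)|\sim \gamma u^{-1-\gamma}$ near $0$ and $q_{s+1}\leq C(\alpha)q_s\log q_s(\log s)^2$ for $\alpha\in\mathcal{D}$, one obtains, for a typical $x\in \T$, a bound $|f'^{(M)}(x)|\leq C_0(\epsilon)\,M^{1+\gamma}(\log M)^K$ for some fixed $K=K(\gamma)$.

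Fix $\beta,m,\epsilon$ and set $k_n:=n$. I would introduce a good set $G_r\subset \T^f$ of points $(x,s)\in K_m$ satisfying (i) the $[0,r]$-orbit of $(x,s)$ spends at most $\epsilon r/4$ of its time either in the tall atom $\{f\geq \log m\}$ or within the $\epsilon/4$-collar of the $\cP_m$-boundary, and (ii) the derivative bound $|f'^{(M)}(x)|\leq C_0(\epsilon)\,M^{1+\gamma}(\log M)^K$ holds for every $|M|\leq 2r$. A Denjoy-Koksma/Ostrovski-type argument in the spirit of Lemma \ref{goodcon} and Corollary \ref{wniosek2}, adapted to power singularities, combined with $\alpha\in \mathcal{D}$ and the Birkhoff ergodic theorem, lets one choose the parameters so that $\mu^f(G_r)\geq\beta$ for all $r\geq r_0(\epsilon,m)$.

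The key geometric point is then that for $(x,s),(y,s')\in G_r$ with $\|x-y\|<\delta_r:=\epsilon\, r^{-(1+\gamma)}(\log r)^{-K}/(2C_0(\epsilon))$ and $|s-s'|<\epsilon/4$, the mean value theorem combined with (ii) yields $|f^{(M)}(x)-f^{(M)}(y)|<\epsilon/4$ for every $|M|\leq 2r$, hence $d^f(T^f_t(x,s),T^f_t(y,s'))<\epsilon$ for every $t\in[0,r]$. Covering $G_r$ by base-intervals of length $\delta_r$ times fiber-intervals of length $\epsilon/4$ requires at most $N_r\leq C_1(\epsilon,m)\,r^{1+\gamma}(\log r)^K$ boxes, each of which, by (i), is contained in a $d_r^{\cP_m}$-Hamming ball of radius $\epsilon$. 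Therefore $S^{k_n}_{\cP_m}(\epsilon,\beta)\leq C_1(\epsilon,m)\,k_n^{1+\gamma}(\log k_n)^K$, so that for any $t>1+\gamma$ the ratio $S^{k_n}_{\cP_m}(\epsilon,\beta)/k_n^t\to 0$.

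The principal obstacle is the uniform measure bound $\mu^f(G_r)\geq\beta$, specifically enforcing property (ii) simultaneously for every $|M|\leq 2r$. A crude union bound on the events $\{x^M_{\min}<c/M\}$ degenerates as $r\to\infty$, so instead one must use the Ostrovski expansion $M=\sum_i b_iq_i$ to decompose $f'^{(M)}(x)$ into contributions from each denominator scale $q_s\leq 2r$ and control them separately. The bad set at scale $q_s$ is summable over $s$ with $q_s\leq r$ precisely because the Diophantine condition in $\mathcal{D}$ forces $q_{s+1}/q_s$ to be at most polylogarithmic, which is what keeps the closest-return term $f'(x^M_{\min})$ under control and ensures that the good-set measure does not drift downwards with $r$.
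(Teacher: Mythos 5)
Your overall strategy is the same one the paper uses: pick a good set of large measure, partition it into small rectangles adapted to the derivative cocycle scale, and show each rectangle sits inside one $\epsilon$-Hamming ball, giving a count of at most $r^{1+\gamma}\cdot\text{polylog}(r)$. The ingredients are the same (the Denjoy--Koksma bound \eqref{koks4} from Lemma~\ref{koksi2}, a Birkhoff/Egorov good set, and the topological-ball $\Rightarrow$ Hamming-ball step using the $\xi$-collar of $\partial\cP_m$). There are, however, two places where you deviate from the paper in a way that introduces unnecessary friction or a small gap.

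First, the sequence: you take $k_n=n$, whereas the paper takes $k_n=q_n$. Because the statement only asks for \emph{some} sequence $k_n\to\infty$, choosing the denominators is free and pays off: with $r=q_n$ the natural bad set is
$V_n=\bigl(\bigcup_{i=0}^{q_n-1}T^{-i}[-\tfrac{2\kappa_n}{q_n\log q_n},\tfrac{2\kappa_n}{q_n\log q_n}]\bigr)^f\cup\{f>\epsilon^{-2}\}\cup X_\epsilon^c$ (adjusted for power singularities), whose measure is $\sim \kappa_n/\log q_n\to 0$, and the hypothesis \eqref{empt2} of Lemma~\ref{goodcon} is satisfied by the remaining rectangles with $R=q_n$ in one stroke. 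Insisting on $k_n=n$ forces you to run the Ostrovski decomposition at every time $M\le 2r$ with no structural simplification, which is exactly the headache you flag at the end; the paper's choice makes that headache disappear rather than needing to be overcome.

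Second, and this is the genuine gap, your property (ii) is a bound on $|f'^{(M)}(x)|$ \emph{at the base point $x$}, but the step ``the mean value theorem combined with (ii) yields $|f^{(M)}(x)-f^{(M)}(y)|<\epsilon/4$'' needs the bound at the intermediate point $\theta\in[x,y]$ supplied by the MVT, not at $x$. A pointwise good set does not automatically control $f'^{(M)}$ throughout the interval $[x,y]$, because $f'^{(M)}$ is not Lipschitz near the singularity. The paper sidesteps this by building the bad set as an \emph{orbit-avoidance} condition on the rectangle: if neither $x$ nor $z$ is in $V_n$, then for $|x-z|<q_n^{-t_0}$ the whole interval $[x,z]$ has its $q_n$-orbit bounded away from $0$, so Lemma~\ref{goodcon} (or its power-singularity analogue via \eqref{koks4}) applies to every $\theta\in[x,z]$ and the MVT step is legitimate. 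If you keep your formulation, you must strengthen (ii) to an interval-wide statement (e.g.\ require that $\bigcup_{i=0}^{2r}T^i[x,y]$ misses a suitable neighbourhood of the singularity), and then your covering/box count goes through with the exponent $1+\gamma$ as you intended.

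Finally, a minor mislabeling: you call the uniform measure bound for $G_r$ the ``principal obstacle,'' but with the Diophantine condition $\alpha\in\mathcal{D}$ and the orbit-avoidance set above, the measure of the bad set is already $o(1)$ in $n$, so $\mu^f(G_r)\ge\beta$ is not the hard part. The hard part was the interval-wide MVT control, which is exactly what the paper's definition of $V_n$ together with Lemma~\ref{goodcon} handles.
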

The above proposition gives an upper bound on $h_s^\beta(T_t)$. The more difficult part is the lower bound, which is given in the proposition below. 

\begin{pr}\label{nr4} For every $\beta\in(0,1]$ there exists $m_0\in \N$ such that for every $m\geq m_0$, every (sufficiently small) $\epsilon>0$ and every $t<1+|\gamma|$ 
$$\liminf_{r\to+\infty}
\frac{S^{r}_{\cP_m}(\epsilon,\xi)}{r^t}=+\infty.$$
\end{pr}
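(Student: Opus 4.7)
The proof of Proposition \ref{nr4} will follow the same template as Proposition \ref{sea}: I aim to show that on a set of large measure, each Hamming ball $B^{\cP_m}_r((x,s),\epsilon)$ has $\mu^f$-measure at most $r^{-(1+\gamma)+o(1)}$, which then gives $S^r_{\cP_m}(\epsilon,\beta) \geq r^{(1+\gamma)-o(1)}$ and hence \eqref{low.b} for every $t<1+\gamma$. The PD-property used in the Arnol'd case is unavailable here (slow Ratner-type divergence fails for Kochergin flows over typical rotations), so I replace it by direct polynomial divergence estimates extracted from Lemma \ref{koksi2}.

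First I would establish a Kochergin analog of Lemma \ref{mim2}: for $k$ large, there is a set $Z_k \subset \T$ of measure $\to 1$ such that if $u,v \in Z_k$ with $\|u-v\| \geq k^{-(1+\gamma)-\eta}$, then $|J_{u,v}| \leq k^{1+o(1)}$. The input is \eqref{koks4}: under $\alpha \in \mathcal{D}$, for typical $\theta$ the orbit $\{\theta + j\alpha\}_{j=0}^{k}$ stays at distance $\geq k^{-1-o(1)}$ from $0$, so $f'(\theta^{k}_{\min}) \in [k^{1+\gamma}, k^{(1+\gamma)(1+o(1))}]$; the error term $q_{s+1}^{1+\gamma}$ in \eqref{koks4} stays below the main contribution thanks to $q_{s+1} \leq C q_s \log q_s (\log s)^2$. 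An Ostrowski-type cocycle decomposition then forces $|f^{(k')}(u)-f^{(k')}(v)| \geq 1/50$ at some $k' \leq k^{1+o(1)}$. In parallel, the same analysis delivers a companion upper bound $|f^{(R)}(u)-f^{(R)}(v)| \leq R^{1+\gamma+o(1)}\|u-v\|$ (the analog of Corollary \ref{wniosek2}) on the same good set.

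The main argument then proceeds as follows. Fix $(x,s),(y,s') \in V$ (a set of large measure obtained by Egorov on the ergodic averages of $\chi_{G^f}$ and of the Step 1 good-orbit indicator, with $G^f \subset K_{m_0}$ a bounded-roof region away from the singularity) satisfying $d^{\cP_m}_r((x,s),(y,s')) < \epsilon$. The set of $t \in [0,r]$ at which both orbits are in $G^f$ and lie in a common atom of $\cP_m$ has measure $\geq (1 - C\epsilon)r$. Let $j(t):=r_1(t)-r_2(t)$ denote the return discrepancy; on the portion of the Hamming ball where this walk has range $\leq r^{o(1)}$ (the complement being handled separately via the contrapositive of Step 1 at a coarser scale), pigeonhole selects a value $j_0$ attained on a set of measure $\geq r^{1-o(1)}$, yielding $t_0 < t_1$ with $t_1-t_0 \geq r^{1-o(1)}$ and $r_1'-r_1 = r_2'-r_2 =: N$. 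Writing $u:=x+r_1\alpha$, $v:=y+r_2\alpha$, same-atom at both times and the cocycle identity give $|f^{(N)}(u)-f^{(N)}(v)| \leq 4/m$, so $N \in J_{u,v}$ and $|J_{u,v}| \geq r^{1-o(1)}$; Step 1 then forces $\|u-v\| \leq r^{-(1+\gamma)+o(1)}$. Feeding this into the Step 1 companion bound along the backward orbit of length $r_1$, and replaying the final cocycle manipulation of the proof of Proposition \ref{sea}, yields $|f^{(r_1-r_2)}(y+r_2\alpha)| \leq r^{o(1)}$, hence $|r_1-r_2| \leq r^{o(1)}$. The allowed $(y,s')$ therefore lie in a union of $\leq r^{o(1)}$ products of $y$-intervals of length $r^{-(1+\gamma)+o(1)}$ with $s'$-intervals of length $O(1/m)$, giving $\mu^f(B^{\cP_m}_r((x,s),\epsilon) \cap V) \leq r^{-(1+\gamma)+o(1)}$.

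The hard part will be the analog of Lemma \ref{mim2}. In the Arnol'd case $f'$ grows only logarithmically and $|f'^{(k)}(\theta)|$ behaves essentially like a uniform sum; for power singularities $f'(x) \sim x^{-1-\gamma}$, and a single orbit point landing very close to $0$ can overwhelm the Birkhoff sum, spoiling both upper and lower bounds. Controlling the two-sided tail of $|f'^{(k)}(\theta)|$ on a set of measure approaching $1$ requires the full strength of $\alpha \in \mathcal{D}$ together with a careful propagation of the error $q_{s+1}^{1+\gamma}$ through each block $b_i q_i$ of an Ostrowski decomposition. A secondary subtlety is the self-referential bootstrap in the pigeonhole step: one needs $j(t)$ to have short range, which is effectively the conclusion we are after; this is handled by first excluding $(y,s')$ with $\|x-y\| \gg r^{-(1+\gamma)-\epsilon}$ as contributing negligible measure, via the contrapositive of Step 1 applied at the coarser scale $r^{1-\epsilon}$.
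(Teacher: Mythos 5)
Your proposal transplants the Arnol'd-case skeleton (PD-property $\to$ one long interval of Bowen-closeness $\to$ $|J_{u,v}|$ large $\to$ $\|u-v\|$ small) to the Kochergin setting, replacing the PD-property by a pigeonhole on the return discrepancy $j(t)=r_1(t)-r_2(t)$. This is a genuinely different route from the paper, which never tries to produce a single long interval of Bowen-closeness at all: instead it performs a \emph{dyadic multi-scale} analysis of $d_1(T_t^f x, T_t^f y)$, bounding the time spent at each scale $2^{-j}$ by $R/j^2$ (Proposition~\ref{techn1}, via Lemma~\ref{techn}), and concludes the existence of a \emph{single time} $t_0$ at which the first coordinates are within $\log^{20}R / R^{1+|\gamma|}$. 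The point is precisely that for Kochergin flows, a Hamming ball does not force a long Bowen-ball, so the shape of the Arnol'd argument is unavailable.

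The concrete gap in your proof is the step ``same-atom at both times and the cocycle identity give $|f^{(N)}(u)-f^{(N)}(v)| \leq 4/m$, so $N \in J_{u,v}$.'' Membership in $J_{u,v}$ (see~\eqref{jxy}) requires $|f^{(n)}(u)-f^{(n)}(v)| < 1/50$ for \emph{every} intermediate $n \in [0,N]$, not merely at the endpoint. Your pigeonhole only controls the value at two selected times $t_0 < t_1$; in between, the walk $j(t)$ is free to change, and — more importantly — the Birkhoff-sum difference $f^{(n)}(u)-f^{(n)}(v)$ can spike far outside $[-1/50,1/50]$ when one orbit passes near the singularity (power singularities generate large excursions of $f$ itself, not just of $f'$, on a single visit) and then return. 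This non-monotonicity of the Birkhoff-sum difference is exactly what is absent in the Arnol'd case, where Lemma~\ref{goodcon} gives two-sided near-linear control of $f^{(r)}(x)-f^{(r)}(y)$ in $r$, and it is why the PD-property fails for Kochergin flows over generic $\alpha$. Relatedly, your pigeonhole yields a large-measure level set $\{t : j(t)=j_0\}$, but this set has no reason to be (close to) an interval, so ``$t_1-t_0 \geq r^{1-o(1)}$'' gives you the diameter, not a contiguous stretch. Finally, the companion Step~1 lower bound on $|f'^{(N)}(\theta)|$ is not available for a pigeonhole-chosen $N$: one needs $N$ such that the orbit of $\theta$ of length $N$ stays well away from $0$ (the set $W_t$ in~\eqref{wn}), and the whole content of Proposition~\ref{mpr} in the paper is to show this holds for a density-one set of times. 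Your argument would need a version of that genericity statement just to make Step~1 applicable — at which point you are essentially reconstructing the paper's multi-scale machinery.
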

Theorem \ref{main2} is an obvious consequence of Propositions \ref{nr3} and \ref{nr4}. We will give the proofs of Propositions \ref{nr3} and \ref{nr4} in separate subsections.

\subsection{Upper bound on orbit growth, proofs of Propositions \ref{nr1} and \ref{nr3}}\label{pr3}
We will give the proof of Propositions \ref{nr1}. The proof of Proposition \ref{nr3} follows the same lines-- one just needs to change the scale. We will indicate modifications one has to make to prove Proposition \ref{pr3}. Before we give a strict proof, let us first give an outline.

\textbf{Outline of the proof:} Assume $f$ has asymmetric logarithmic singularities. Then for "typical" (in measureable sense) $\theta\in \T$, we have 
$$\sup_{0\leq i\leq q_n}|f'^{(i)}(\theta)|\leq C q_n\log q_n.$$
Fix $t>1$. For a "typical" pair $x,y\in \T$ such that $\|x-y\|\leq \frac{1}{q_n\log^{t}q_n}$  
$$
\sup_{0\leq i\leq q_n}|f^{(i)}(x)-f^{(i)}(y)\leq \|x-y\|\sup_{0\leq i\leq q_n}|f'^{(i)}(\theta)|\ll 1.
$$
Therefore, if $|s-s'|<\frac{1}{2m}$, then $(x,s)$ and $(y,s')$ are in one topological ball of length $q_n$-- the points do not split until $q_n$. So they are also in one Hamming ball of length $q_n$. We get
$$
S^{q_n}_{\cP_m}(\epsilon,\beta)\leq C(\epsilon, \beta) q_n\log^{t}q_n.
$$
This gives \eqref{lem.1} and hence also Proposition \ref{nr1}. Analogous argumentation (with different scale and $t>1+|\gamma|$ instead of $t>1$) gives an outline of the proof of Proposition \ref{nr3}.
\begin{uw}\label{rem.liou} Proposition \ref{nr3} (and hence also Theorem \ref{main2}) does not hold if $\alpha$ is  well aproximable by rationals, then the upper bound for the Bowen balls (and hence also Hamming balls) is different. For simplicity we will discuss the case  $\alpha$ is liouvillean, i.e. $q_{n+1}\geq e^{q_n}$ for a subsequence of denominators (although what follows is true for well aproximable diophantine $\alpha$). Consider the set $S:=\bigcup_{i=0}^{-q_n+1}[\frac{1}{4q_n},\frac{1}{3q_n}]$. Then for every $\theta\in S$ and $N=\left[\frac{q_{n+1}}{10}\right]$, we have
$$
\sup_{0\leq i\leq N}|f'^{(N)}(\theta)\leq Cq_{n+1}q_n^\gamma.
$$
Therefore, if $x,y\in S$ satisfy $\|x-y\|\leq \epsilon^2(q_{n+1}q_n^{\gamma})^{-1}$ then they are in one Bowen ball (of length $N$). So 
$$
S^{N}_{\cP_m}(\epsilon,\beta)\leq \epsilon^{-2}q_{n+1}q_n^{\gamma}\leq N\log N,
$$
which show that the asymptotical number of balls is much smaller than $N^{1+\gamma}$. Hence one cannot use this technique for Liouvillean $\alpha$. On the other hand this should be enough to disprove the local rank one of Kochergin flow for every $\alpha$ (since the growth is superlinear for every $\alpha$).
\end{uw}
 



\begin{proof}[Proof of Proposition \ref{nr1}] Recall that we assume that $\int_\T f(x) d\lambda=1$. Notice that by the definition of $S^{k_n}_{\cP_m}(\epsilon,\beta)$, it is enough to prove \eqref{lem.1} for $\beta=1$. Let $k_n=q_n$ for $n\in \N$.
Fix $t>1$, $t_0\in (1,t)$ ($1+|\gamma|<t_0<t$ for Proposition \ref{nr3}), $m\in \N$ and $\epsilon\ll m^{-1}$.

Take $\xi=\xi(\epsilon)>0$ such that the $\xi$-neighbourhood $V_\xi$ of the boundary of $\cP_m$ has measure $<\frac{1}{10}\epsilon$. \\

By Birkhoff theorem (using Egorov's theorem for $\chi_{V_\xi}$), there exists a set $X_\epsilon\subset X^f$, $\mu^f(X_\epsilon)>1-\frac{1}{2}\epsilon$ and a number $k_0$ such that for all $M>k_0$ and $(x,s)\in X_\epsilon$ we have

\begin{equation}\label{birk}
\frac{1}{M}\int_0^M\chi_{V_\xi}(T_t^f(x,s))dt<\frac{1}{3}\epsilon,
\end{equation}
and
\begin{equation}\label{birk2}
\left|\frac{1}{M}f^{(M)}(x)-\int_\T f d\lambda\right|<\frac{1}{3}\epsilon.
\end{equation}
Moreover, since $t_0>1$, we have, for all $n$ sufficiently large
\begin{equation}\label{delt}(\log q_n)^{t_0-1}\geq \xi^{-10}
\end{equation}
(\eqref{delt} becomes $q_n^{t_0-1-|\gamma|}\geq \xi^{-10}$ in Proposition \ref{nr3}).

For such $n$, consider the set (with $\kappa_n=\log n$)
$$V_n:=\left(\bigcup_{i=0}^{q_n-1}T^{-i}
\left[-\frac{2\kappa_n}{q_n\log q_n},\frac{2\kappa_n}{q_n\log q_n}\right]\right)^f\bigcup\left\{(x,s)\;:\; f(x)>\frac{1}{\epsilon^2}\right\}\bigcup X_{\epsilon}^c.$$

We have $\mu^f(V_n)<\frac{1}{2}\epsilon$. Consider the partition of the set $X^f\setminus V_k$ into rectangles $U(x,y,s)$ of the form $[x,y]\times[s,s+\eta]$, where $\eta\in [\xi^2,2\xi^2]$ (if we are "close" to the graph of $f$, we may have a degenerated rectangle) and $x,y\in \T$ satisfy
\begin{equation}\label{xz}\frac{1}{2q_n(\log q_n)^{t_0}}<|x-y|<\frac{1}{q_n(\log q_n)^{t_0}}
\end{equation}  ($\frac{1}{2q_n^{t_0}}<|x-y|<\frac{1}{q_n^{t_0}}$ for Proposition \ref{nr3}). 

 Since $\int_\T f(x)d\lambda=1$ it follows that the number of such rectangles is $\leq \frac{q_n(\log q_n)^{t_0}}{\xi^2}$ ($\leq \frac{q_n^{t_0}}{\xi^2}$ in Proposition \ref{nr3}). We will show that every such rectangle we have
\begin{equation}\label{xys}
U=U(x,y,s)\subset B^{q_n}_{\cP_m}((x,s),\epsilon). 
\end{equation}
This will finish the proof of \eqref{lem.1} since then 
$$
S^{q_n}_{\cP_m}(\epsilon)\leq \frac{q_n(\log q_n)^{t_0}}{\xi^2},
$$
($S^{q_n}_{\cP_m}(\epsilon)\leq \frac{q_n^{t_0}}{\xi^2}$ for Proposition \ref{nr3}) and since $\xi$ depends on $\epsilon$ only,  \eqref{lem.1} follows.

Let us show \eqref{xys}. Note that since $(x,s)\in V_n^c\subset X_\epsilon$, \eqref{birk2} holds for $x$ and $M=q_n$. Therefore, 
$$
B_{f^{(q_n)}(x)}((x,s),\frac{1}{2}\epsilon)\subset B_{q_n}((x,s),\epsilon)
$$
and it is enough to show that $U\subset B_{f^{(q_n)}(x)}(x,\frac{1}{2}\epsilon)$. Take $(z,r)\in U$. We will first show that for every $t\in[0,f^{(q_n)}(x)]$ we have 
\begin{equation}\label{dist}
d^f(T^f_t(x,s),T_t^f(z,r))<\xi.
\end{equation}
 By the definition of $U$ it follows that $d^f((x,s),(z,r))<2\xi^2$. We will show that for every $i=0,...q_n-1$, 
$$
|f^{(i)}(x)-f^{(i)}(z)|<\xi^2.
$$
This, by the definition of $d^f$ will finish the proof of \eqref{dist}.
Note that since $(x,s),(z,r)\in V_n^c$, \eqref{empt2} is satisfied for $R=q_n$. Therefore by \eqref{birk.cont}, \eqref{birk.cont2} and \eqref{xz}, we get

$$
|f^{(i)}(x)-f^{(i)}(z)|=100\|x-z\|q_n\log q_n\leq \xi^2,
$$
where the last inequality follows by \eqref{delt}. So \eqref{dist} follows (analogous computations can be made in Proposition \ref{nr3}). It follows by \eqref{dist} that if for some $t\in[0,f^{(q_n)}(x)]$, $T^f_t((x,s))$ and $T^f_t((z,r))$ are not in the same atom of $\cP_m$, then $T_t(x,s)\in V_\xi$ ($\eta$ neighbourhood of the boundary of $\cP_m$). Since $(x,s)\in V_n^c$, \eqref{birk2} is satisfied for $(x,s)$ and \eqref{birk} is  satisfied for $(x,s)$ and $M=f^{(q_n)}(x)$. Therefore,

$$
\bar{d}_{M}((x,s),(z,r))\leq \frac{1}{M}\lambda\{t\in [0,M]: T_tx\in V_\eta\}=\frac{1}{M}\int_{X^f}\chi_{V_\eta}(T^f_t(x,s))dt\leq \frac{1}{3}\epsilon.
$$
So \eqref{xys} holds and the proof of Proposition \ref{nr1} is finished. The proof of Proposition \ref{nr3} follows along the same lines (after the indicated changes).
\end{proof}

\subsection{Lower bound on orbit growth, proof of Proposition \ref{nr4}}\label{pr4}
The proof of Proposition \ref{nr4} uses some ideas from \cite{KRV}. For the sake of completness, we will present a self-contained proof. For simplicity, we will change notation: we will denote points in $\T^f$ simply by $x$ (not $(x,s)$). Also $d_1$ and $d_2$ denote the (pseudo) metrics on the first and second coordinate respectively, i.e. $d^f(x,y)=d_1(x,y)+d_2(x,y)$. 

Le $(\cP_m)$ be the sequence of partitions introduced in Section \ref{Arn}.
Proposition \ref{nr4} follows by the following proposition: 

\begin{pr}\label{nr5}For every $\delta>0$ there exists a set $A=A_\delta\subset \T^f$, $\mu(A)>1-\delta$ and $m_\delta,R_\delta\in \N$ such that for every $x,y\in A$, $m\geq m_\delta$, $R\geq R_\delta$ if $\bar{d}_R^{\cP_m}(x,y)<1/100$ then there exists $t_0=t_0(x,y)\in[0,R]$ such that (see \eqref{km0})
$$
T^f_{t_0}x,T^f_{t_0}y\in K_m\text{ and }d_1(T^f_{t_0}x,T^f_{t_0}y)\leq \frac{\log^{20}R}{R^{1+|\gamma|}}.
$$
\end{pr}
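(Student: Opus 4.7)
The plan is to find matched times $t_0<t_1$ in $[0,R]$ with $t_1-t_0\asymp R$ and to turn the matching into a tight control on a Birkhoff sum difference, from which the required first-coordinate bound follows by inverting the Denjoy--Koksma estimates of Lemma \ref{koksi2}. First I would define $A_\delta$ via Egorov's theorem so that, for every $x\in A_\delta$ and every $R\geq R_\delta$: (a) the flow orbit $(T^f_tx)_{t\in[0,R]}$ spends at least $(1-1/100)R$ of its time in $K_m$ for every $m\geq m_\delta$; (b) the base orbit $(x+j\alpha)_{|j|\leq 2R}$ avoids the $\epsilon_R$-neighborhood of $0$ for $\epsilon_R:=R^{-1}(\log R)^{-10}$, except on a controlled exceptional set; and (c) the two-sided Denjoy--Koksma estimate $|f'^{(k)}(\theta)|\in[f'(\theta^k_{\min})-8q_{s+1}^{1+\gamma},\,f'(\theta^k_{\min})+8q_{s+1}^{1+\gamma}]$ from Lemma \ref{koksi2} holds for all $k\leq 2R$ and all $\theta$ near the orbit of $x$.

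Given $x,y\in A_\delta$ with $\bar d_R^{\cP_m}(x,y)<1/100$, the matching set $E:=\{t\in[0,R]:T^f_tx,T^f_ty\text{ lie in the same atom of }\cP_m\}$ has Lebesgue measure $\geq(1-1/100)R$, and by (a) applied to both orbits the refinement $E':=E\cap\{t:T^f_tx,T^f_ty\in K_m\}$ still has measure $\geq(1-3/100)R$. Thus $E'\cap[0,R/3]$ and $E'\cap[2R/3,R]$ are both nonempty; pick $t_0$ in the first and $t_1$ in the second. Since non-singular atoms of $\cP_m$ have diameter at most $2/m$, each matching time gives $\rho^f(T^f_{t_j}x,T^f_{t_j}y)\leq 2/m$ for $j=0,1$. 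Writing $T^f_t(x,s)=(x+N_t^x\alpha,\,s+t-f^{(N_t^x)}(x))$, the matching of the second coordinates at $t_0$ and $t_1$, together with the cocycle identity, produces
$$\left|f^{(k_x)}(X_{t_0})-f^{(k_y)}(Y_{t_0})\right|<4/m,$$
where $k_x:=N_{t_1}^x-N_{t_0}^x$ and $k_y:=N_{t_1}^y-N_{t_0}^y$ are both comparable to $t_1-t_0\geq R/3$. The matching in $\rho^f$ forces $|k_x-k_y|=O(1)$, and since the pertinent intermediate iterates lie in $K_m$, replacing $k_y$ by $k_x=:k$ costs at most an additive $O(\log m_\delta)$, giving $|f^{(k)}(X_{t_0})-f^{(k)}(Y_{t_0})|\leq O(\log m_\delta)$.

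Setting $\eta:=\|X_{t_0}-Y_{t_0}\|$ and invoking (b), the segment $[X_{t_0},Y_{t_0}]$ avoids the singularity under the first $k$ iterates of $T$ as long as $\eta<\epsilon_R$; by the mean value theorem we then have $f^{(k)}(X_{t_0})-f^{(k)}(Y_{t_0})=f'^{(k)}(\theta)\cdot(X_{t_0}-Y_{t_0})$ for some $\theta\in[X_{t_0},Y_{t_0}]$. Combining Lemma \ref{koksi2} with $\alpha\in\mathcal{D}$ (which forces $q_{s+1}^{1+\gamma}\leq q_s^{1+\gamma}(\log q_s)^{O(1)}$) and the polylogarithmic control of $\theta^k_{\min}$ from (b), we obtain $|f'^{(k)}(\theta)|\geq c\,R^{1+\gamma}/(\log R)^{C}$, hence
$$\eta\leq\frac{O(\log m_\delta)(\log R)^{C}}{R^{1+\gamma}}\leq\frac{\log^{20}R}{R^{1+\gamma}}$$
for $R\geq R_\delta$, consistent with the assumption $\eta<\epsilon_R$ since $\log^{20}R/R^{1+\gamma}\ll\epsilon_R$ when $\gamma>0$. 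The case $\eta\geq\epsilon_R$ is ruled out by contradiction: the same Denjoy--Koksma input then forces $|f^{(k)}(X_{t_0})-f^{(k)}(Y_{t_0})|\gtrsim R^\gamma/(\log R)^{10+C}\to\infty$, which is incompatible with the bound $O(\log m_\delta)$ just derived.

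The main obstacle is extracting a tight lower bound on $|f'^{(k)}(\theta)|$: the error term $8q_{s+1}^{1+\gamma}$ in Lemma \ref{koksi2} can dominate the main term $f'(\theta^k_{\min})$ unless $\theta^k_{\min}$ is controlled by a suitable polylogarithmic factor of $1/q_{s+1}$, which is precisely what (b) together with $\alpha\in\mathcal{D}$ is designed to ensure. A secondary delicate point is controlling the exponent mismatch $|k_x-k_y|$ and keeping the intermediate iterates in $K_m$ so that the additive correction remains $O(\log m_\delta)$ rather than something growing with $R$.
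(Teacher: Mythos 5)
The core of your argument is the step where you invert the mean-value identity $f^{(k)}(X_{t_0})-f^{(k)}(Y_{t_0})=f'^{(k)}(\theta)(X_{t_0}-Y_{t_0})$, and for this you need a lower bound $|f'^{(k)}(\theta)|\gtrsim R^{1+\gamma}/\log^C R$. This is precisely where the argument breaks, and the gap cannot be patched within the framework you set up. For the Kochergin roof the singularity is \emph{symmetric}: $f'$ is very negative on the right of $0$ and very positive on the left, so the Birkhoff sum $f'^{(k)}(\theta)$ sees massive cancellation. The two-sided Denjoy--Koksma estimate \eqref{koks4} in Lemma \ref{koksi2} only gives
$$f'(\theta^k_{\min})-8q_{s+1}^{1+\gamma}\;\leq\;|f'^{(k)}(\theta)|,$$
and for a typical $\theta$ one has $\theta^k_{\min}\asymp 1/q_{s+1}$, so the main term $f'(\theta^k_{\min})\asymp q_{s+1}^{1+\gamma}$ is \emph{comparable} to the error, and the lower bound is vacuous. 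Worse, your condition (b) (the orbit avoids the $\epsilon_R$-neighborhood of $0$) is an \emph{upper} bound on $f'(\theta^k_{\min})$: it caps the derivative, it does not floor it. So nothing in (a)--(c) forces $|f'^{(k)}(\theta)|$ to be large, and indeed it is false in general that it is — this is precisely the extra difficulty that distinguishes Kochergin flows (power singularity) from Arnol'd flows (asymmetric log singularity, where $|f'^{(k)}|$ \emph{is} uniformly large on a good set, cf. Lemma \ref{koksi}).

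The paper's proof handles this via a mechanism that is absent from your sketch. The lower bound on derivative Birkhoff sums is only available on a set of times of density close to $1$: this is the content of the sets $W_t$ of \eqref{wn} and Proposition \ref{mpr}, whose proof is a genuinely nontrivial $L^2$/Borel--Cantelli argument (Appendix, Lemmas \ref{lio}, \ref{smd}, \ref{con}) exploiting that $f'^{(n)}$ has a simple zero on each base interval with a large second derivative. Because the lower bound is only density-$1$ in $t$ and not pointwise, the paper does not try to extract the estimate from a single pair $(t_0,t_1)$ as you do; instead it uses a dyadic decomposition of $[0,R]$ by the first-coordinate distance $d_1(T^f_tx,T^f_ty)\in(2^{-j-1},2^{-j}]$ (the sets $A_j^{R,m}$ in \eqref{anj}) and bounds the Lebesgue measure of each scale via Proposition \ref{techn1}(C), using Lemma \ref{techn}(3)--(4) and the second-order estimate of Lemma \ref{sub2} (second order is also needed, since the large derivative is at $X_{t_0}$, not at the MVT point $\theta$; one has to control $f''$ via \eqref{koks5}). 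Only then does one conclude that so little time can be spent at any ``moderate'' scale that some time $t_0$ must land at scale $\leq\log^{20}R/R^{1+\gamma}$. Your single-pair matching argument would require the derivative lower bound to hold for the specific transfer time $k$ and the specific intermediate point $\theta$ produced by your choice of $t_0,t_1$, and there is no reason for that — the bad set in Lemma \ref{smd} is small in measure but not empty.
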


Before we prove Proposition \ref{nr5} let us show how it implies Proposition \ref{nr4}.

\begin{proof}[Proof of Proposition \ref{nr4}]
Fix $\delta\ll \beta$. For $R\geq 1$ define 
\begin{equation}\label{cyr2}
C_R=\bigcup_{i=0}^R\left[-\frac{1}{R\log R}+i\alpha,
\frac{1}{R\log R}+i\alpha\right].
\end{equation}
Let $R_1$ be such that $\lambda(C_{R_1})\ll \beta$. Fix  $m\geq m_\delta$ and $R\gg \max(R_\delta,R_1,m)$. Let
$$
 C:=A\cap (C^f_R)^c\cap\left\{x\in \T^f \;:\: f(x)<\log m\right\}.
$$
Notice that $\mu(C)\gg 1-\beta$. Take any $x,y\in C$  such that 
\begin{equation}\label{sba2}
y\in B^{\cP_m}_R(x,1/100). 
\end{equation}
This means that
$$d^{\cP_m}_R(x,y)\leq 1/100.$$ 
By Proposition \ref{nr5}, there exists $t_0\in [0,R]$ such that $d_1(T^f_{t_0}x,T^f_{t_0}y)\leq \frac{\log^{20}R}{R^{1+|\gamma|}}$. Let $r_1,r_2\geq 0$ be such that the first coordinates of $T^f_{t_0}x$ and $T^f_{t_0}y$ are respectively $x+r_1\alpha$ and $y+r_2\alpha$. Then \begin{equation}\label{fgh2}
\|x-y-(r_2-r_1)\alpha\|\leq \frac{\log^{20}R}{R^{1+|\gamma|}}.
\end{equation}
Since $x,y\in C\subset \left(\{x\in \T^f \;:\: f(x)<\log m\}\right)$ and $T^f_{t_0}x,T^f_{t_0}y\in K_m$ we have 
$$
2m\geq |f^{(r_1)}(x)-f^{(r_2)}(y)|.
$$
Moreover, since $x,y\notin C_R^f$ by \eqref{fgh2}, for some $\theta\in [x+r_1\alpha,y+r_2\alpha]$ we get 
$$
|f^{(-r_1)}(x+r_1\alpha)-
f^{(-r_1)}(y+r_2\alpha)|=|f'^{(-r_1)}(\theta)|
\|(x+r_1\alpha)-(y+r_2\alpha)\|.
$$
By the fact that $x\notin C_R^f$ and by \eqref{fgh2}, we get 
for $i=0,...,r_1$
$$
T^{-i}\theta\notin [-\frac{1}{2R\log R},\frac{1}{2R\log R}].
$$
So by \eqref{koks4} for $x=\theta$, $M=-r_1$ and $q_{s+1}\leq q_s\log^2q_s$, we have $|f'^{(-r_1)}(\theta)|<R^{1+|\gamma|}\log^5R$. By \eqref{fgh2}, we have
$$
|f^{(-r_1)}(x+r_1\alpha)-f^{(-r_1)}(y+r_2\alpha)|\leq \log^{25}R.
$$
Therefore, 
\begin{multline*}
2m\geq |f^{(r_1)}(x)-f^{(r_2)}(y)|=
|f^{(-r_1)}(x+r_1\alpha)-
f^{(-r_1)}(y+r_2\alpha)+\\
f^{(r_1-r_2)}(y+(r_2-r_1)\alpha)|\geq 
|f^{(r_1-r_2)}(y+(r_2-r_1)\alpha)|- \log^{25}R,
\end{multline*} 
which implies that $|r_1-r_2|\leq \log^{30}R$ ($R \gg m$). By this and \eqref{fgh2} we get that if $(y,r)$ and $(x,s)$ satisfy \eqref{sba2}, then 
$$
(y,r)\in \left(\bigcup_{i=-\log^{30}R}^{\log^{30}R}\left[x+i\a-
\frac{\log^{20}R}{R^{1+|\gamma|}},x+i\a
-\frac{\log^{20}R}{R^{1+|\gamma|}}\right]\right)^f.
$$
Therefore, for every $(x,s)\in C $
$$
\mu(B^{\cP_m}_R((x,s),1/100)\cap C)\leq \frac{\log^{51}R}{R^{1+|\gamma|}}.
$$
Since $\mu(C)\gg 1-\beta$ this implies that  
$$
S^{r}_{\cP_m}(\epsilon,\beta)\geq \frac{R^{1+|\gamma|}}{\log^{60}R}.
$$
This finishes the proof of Proposition \ref{nr4}.
\end{proof}

So it remains to prove Proposition \ref{nr5}. The reasoning in the proof of Proposition \ref{nr5} is based on \cite{KRV}.\\
\\
For $x,y\in \T^f$, $m>0$ and $R,j\in \N$ denote
\begin{multline}\label{anj}
A_j^{R,m}(x,y):=\{t\in [0,R]\;:\;d^f(T^f_tx,T^f_ty)<2m^{-1}
\text{ and }\\
2^{-j-1}<d_1(T^f_tx,T^f_ty)\leq 2^{-j}\}.
\end{multline}

We will show that Proposition \ref{nr5} follows by the following proposition:

\begin{pr}\label{techn1} For every $\delta>0$ there exists $R_\delta, m_\delta>0$ and a set $B=B_\delta\subset\T^f$, $\mu(B)>1-\delta$,  such that for every
$m\geq m_\delta$, $R\geq R_\delta$ and every $x,y\in B$ there exists a set $U_R=U_R(x,y)\subset [0,R]$ such that
\begin{enumerate}
	 \item[$(A)$] $|U_R|\geq \frac{9R}{10}$, 
    \item[$(B)$] for every $t\in U_R$, we have  $T^f_tx,T^f_ty \in K_{m_\delta}$ (see \eqref{km0}), 
    \item[$(C)$] for every $j$ such that $2^j\leq \frac{R^{1+|\gamma|}}{\log^{15}R}$ we have
$$\left|U_R\cap A_j^{R,m}(x,y)\right|\leq \frac{R}{j^2}.$$ 
\end{enumerate}
\end{pr}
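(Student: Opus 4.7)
The plan is to construct $B$ via the Birkhoff ergodic theorem and Egorov's theorem, to define $U_R$ by excluding bad times, and then to bound $|A_j^{R,m}(x,y)\cap U_R|$ by combining the stretching estimates of Lemma \ref{koksi2} with a careful counting of ``close-return'' sub-intervals.

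First, I fix $m_\delta\in\N$ large enough that $\mu^f(K_{m_\delta})>1-\delta/100$. By the pointwise ergodic theorem applied to the indicator of $K_{m_\delta}^c$ and to the indicator of the shrinking singular neighbourhood $I_R:=\{(x,s)\in\T^f:|x|<R^{-1}\log^{-30}R\}$, together with Egorov's theorem, I obtain $B\subset\T^f$ with $\mu^f(B)>1-\delta$ and $R_\delta\in\N$ such that for every $z\in B$ and every $R\ge R_\delta$
$$
|\{t\in[0,R]:T^f_tz\notin K_{m_\delta}\}|<\tfrac{\delta R}{40}\qquad\text{and}\qquad|\{t\in[0,R]:T^f_tz\in I_R\}|<\tfrac{\delta R}{40}.
$$
For $x,y\in B$ I define $U_R$ to be the set of $t\in[0,R]$ at which both $T^f_tx,T^f_ty\in K_{m_\delta}$ and neither of them lies in $I_R$. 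Then $|U_R|\ge 9R/10$, which gives (A), while (B) is immediate from the definition.

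For (C), I fix $j$ with $2^j\le R^{1+|\gamma|}/\log^{15}R$ and decompose $A_j^{R,m}(x,y)\cap U_R$ into maximal sub-intervals $[t_i^-,t_i^+]$, $i=1,\dots,L$. On each such interval the bound $d^f<2m^{-1}$ pins the base-coordinate discrepancy $k_i:=N_1(t)-N_2(t)\in\{-1,0,1\}$ to a constant, and depending on which of the three terms in the definition of $d^f$ attains the minimum, the second-coordinate part of $d^f$ equals $|f^{(N)}(x')-f^{(N)}(y')+c_i|$ with $N=N_1(t)-N_1(t_i^-)$, $|c_i|<2m^{-1}$, and $x',y'\in\T$ the appropriately $k_i\alpha$-shifted first coordinates of $T^f_{t_i^-}x,T^f_{t_i^-}y$. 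Since the orbit of $x'$ of length $N\le R$ avoids $[-R^{-1}\log^{-30}R,R^{-1}\log^{-30}R]$ by the definition of $U_R$, the mean-value theorem combined with Lemma \ref{koksi2} (applied along the Ostrowski expansion of $N$) gives $|f^{(N)}(x')-f^{(N)}(y')|\gtrsim N^{1+\gamma}\|x'-y'\|/\log^{C}R$ once $N$ is polynomially large in $\log R$. With $\|x'-y'\|\in(2^{-j-1},2^{-j}]$ this forces $t_i^+-t_i^-\lesssim 2^{j/(1+\gamma)}\log^{C}R$.

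It remains to bound the number of intervals $L$. Between consecutive intervals $[t_i^-,t_i^+]$ and $[t_{i+1}^-,t_{i+1}^+]$ the Birkhoff-sum difference $N\mapsto f^{(N)}(x')-f^{(N)}(y')$ must leave and re-enter the window $(-2m^{-1},2m^{-1})$, i.e.\ complete a full excursion. Each excursion is driven by a close return of the base orbit of $x'$ to $0$ at a scale comparable to $2^{-j}$; by the three-distance theorem and the Ostrowski expansion, the number of such returns in time $R$ is at most of order $R\cdot 2^{-j}\log^{C}R$, while the condition that the excursion brings the Birkhoff difference back to the window forces the close return to occur at a specific Ostrowski level. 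Partitioning excursions by this level and summing the resulting geometric-type series (weighted by $n^{-2}$, reflecting the $(\log n)^{2}$ correction in the diophantine condition $\alpha\in\mathcal{D}$), one obtains $L\lesssim R\cdot 2^{-j/(1+\gamma)}\cdot j^{-2}\log^{-C}R$, which multiplied by the per-interval length of the previous paragraph yields $|A_j^{R,m}(x,y)\cap U_R|\le R/j^2$. The extraction of the $j^{-2}$ factor in the counting step is the main technical hurdle and follows the strategy of \cite{KRV}.
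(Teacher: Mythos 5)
Your sketch captures the right high-level strategy (Birkhoff/Egorov for $B$ and $U_R$, stretching estimates for the per-block bound, a counting argument for the $j^{-2}$ gain), but there are two places where the argument as written would break down, and they are exactly where the paper has to do serious work.

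First, the per-interval length bound is not valid as stated. You invoke the mean-value theorem plus Lemma \ref{koksi2} to claim $|f^{(N)}(x')-f^{(N)}(y')|\gtrsim N^{1+\gamma}\|x'-y'\|/\log^{C}R$ once $N$ is polynomially large in $\log R$. But the lower bound in \eqref{koks4} reads $f'(x^M_{\min})-8q_{s+1}^{1+|\gamma|}\leq |f'^{(M)}(x)|$, and this is vacuous (the left side is negative) whenever $x^M_{\min}$ is not within $O(q_{s+1}^{-1})$ of the singularity. So for a generic orbit segment the Birkhoff derivative has no polynomial lower bound, and the Birkhoff-sum difference can stay inside the window $(-2m^{-1},2m^{-1})$ for much longer than $2^{j/(1+\gamma)}\log^{C}R$. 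This is precisely why the paper introduces the sets $W_t$ in \eqref{wn} and proves Proposition \ref{mpr}: the derivative is large only on a set of good times $t$ of density $1-\log^{-3}T$, and the bound in Lemma \ref{cruc} is on the \emph{measure} of the set of close times (bad $t$ included), not on the length of each maximal close-interval.

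Second, the counting of excursions $L$ with the extra $j^{-2}$ factor is the crux of Property $(C)$, and the mechanism you propose (``forces the close return to occur at a specific Ostrowski level'' plus ``summing a geometric-type series weighted by $n^{-2}$'') is not a proof. The paper avoids counting excursions altogether. It instead subdivides $[0,R]$ into congruent pieces of length $j^{9}2^{j/(1+|\gamma|)}$, and uses the locking Lemma \ref{cons:dis} (property 4 of Lemma \ref{techn}): for any $w$ with $d_1(T^f_wx,T^f_wy)=R_w^{-1}$, in the whole window of radius $R_w/\log^5 R_w$ around $w$ the first-coordinate distance is either \emph{exactly} $R_w^{-1}$ or at least $100R_w^{-1}$, a dichotomy forced by the three-distance structure of the Diophantine rotation. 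Since each piece has length $\leq 2R_w/\log^5R_w$, the entire piece sits inside one locking window, so Lemma \ref{cruc}'s measure bound $2^{j/(1+|\gamma|)}j^5$ applies per piece. Multiplying by the number of pieces $R/(j^{9}2^{j/(1+|\gamma|)})$ gives $R/j^{4}\leq R/j^{2}$. The exponents $5$ and $9$ are chosen deliberately to extract the polynomial-in-$j$ gain; a crude $\log^{C}R$ per-piece bound in its place would not close the argument for $j$ in the range $\log m\lesssim j\ll\log R$. Neither the locking lemma nor the uniform subdivision appears in your sketch, and without them the $j^{-2}$ factor is not obtained.
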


Before we prove Proposition \ref{techn1}  let us show how it implies Proposition \ref{nr5}.

\begin{proof}[Proof of Proposition \ref{nr5}] Fix $\delta>0$. Take $R\geq R_\delta$, $m\geq m_\delta$,  $x,y\in B$  and let $d^{\cP_m}_R(x,y)<1/10$.
By $(B)$ in Proposition \ref{techn1}, the definition of $\cP_m$ and $U_R$, we have
\begin{multline}\label{sdf}
1/10>d^{\cP_m}_R(x,y)\geq 1-\frac{|U_R^c\cap [0,R]|}{R}-\frac{1}{R}\sum_{j\geq 0}\left|U_N\cap A_j^{R,m}(x,y)\right|\geq \\
\frac{9}{10}-\frac{1}{R}\sum_{j\geq 0}\left|U_N\cap A_j^{R,m}(x,y)\right|.
\end{multline}
Notice that by \eqref{anj} for $j\leq \frac{\log m}{2}$, we have
\begin{equation}\label{rmh}
A_j^{R,m}(x,y)=\emptyset.
\end{equation}
Let $j_R$ be such that \begin{equation}\label{dis}2^{j_R-1}\leq \frac{R^{1+|\gamma|}}{\log^{15}R}<2^{j_R}.
\end{equation}
 Then by $(C)$ in Proposition \ref{techn1} and \eqref{rmh} (since $\log m\gg 1$)
$$
\frac{1}{R}\sum_{j\leq j_R}\left|U_N\cap A_j^{R,m}(x,y)\right|\leq 1/100.
$$
Hence and by \eqref{sdf}, there exists $j_1\geq j_R$ such that 
$$
U_N\cap A_{j_1}^{R,m}(x,y)\neq\emptyset.
$$
This by the definition of $A_j^{R,m}(x,y)$ and \eqref{dis} implies that there exists $t_0\in[0,R]$ such that 
$$T^f_{t_0}x,T^f_{t_0}y\in K_{m_\delta}\subset K_m\text{ and }d_1(T^f_{t_0}x,T^f_{t_0}y)\leq \frac{\log^{20}R}{R^{1+|\gamma|}},$$
which finishes the proof of Proposition \ref{nr5}.
\end{proof}

So it remains to prove Proposition \ref{techn1}. We will do it in a separate subsection.

\subsection{Proof of Proposition \ref{techn1}}
We will use the following lemma:

\begin{lm}\label{techn} For every $\delta>0$ there exists  $R_\delta,m_\delta\in \N$ and a set $D\subset \T^f$, $\mu(D)>1-\delta$, such that for every $R\geq R_\delta$, $m\geq m_\delta$ and every $x,y\in D$ there exists a set 
$U_R=U_R(x,y)\subset [0,R]$ such that
\begin{enumerate}
	 \item $|U_R|\geq \frac{9R}{10}$, 
    \item for every $t\in U_R$, we have   $T^f_tx,T^f_ty \in K_{m_\delta}$, 
	 \item for every $w\in U_R$ such that $d(T_w^fx,T_w^fy)<2m^{-1}$, if we denote $d_1(T_w^fx,T_w^fy)=R_w^{-1}$,  then 
\begin{multline*}
|\{t\in [-R_w^{\frac{1}{1+|\gamma|}}\log^{10}R_w,
R_w^{\frac{1}{1+|\gamma|}}\log^{10}R_w]\;:
\;d_1(T^f_{t+w}(x),T^f_{t+w}(y))=R_w^{-1}\\\text{ and }
d_2(T^f_{t+w}(x),T^f_{t+w}(y))<1
\}|<R_w^{\frac{1}{1+|\gamma|}}\log^{3}R_w,
\end{multline*}
\item For $w\in U_R$ if $R_w^{-1}=d_1(T_w^fx,T_w^fy)<2m^{-1}$ then for every $t\in [-\frac{R_w}{\log^5R_w},\frac{R_w}{\log^5R_w}]$ either $d_1(T^f_{t+w}(x),T^f_{t+w}(y))=d_1(T_w^fx,T_w^fy)$ or 
$$
d_1(T^f_{t+w}(x),T^f_{t+w}(y))\geq 100d_1(T_w^fx,T_w^fy).
$$
\end{enumerate}
\end{lm}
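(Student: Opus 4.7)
The plan is to construct $D$ as an intersection of two Birkhoff/Egorov sets and then verify the four properties in turn, with (3) being the heart of the argument. First, fix $m_\delta$ so large that $\mu(\T^f \setminus K_{m_\delta}) < \delta^2/100$. Applying Birkhoff's ergodic theorem to $\chi_{\T^f \setminus K_{m_\delta}}$ combined with Egorov's theorem yields a set $D_1$ of measure $>1-\delta/3$ and an $R_0^{(1)}$ such that for $x\in D_1$ and $R\geq R_0^{(1)}$ the orbit $(T_t^f x)_{t\in[0,R]}$ is outside $K_{m_\delta}$ for at most a $1/200$-fraction of times. I then introduce the ``Denjoy-Koksma good'' base set $G\subset \T$ of points with $x^k_{\min}\geq k^{-1}(\log k)^{-100}$ for all sufficiently large $k$; a standard Borel-Cantelli bound gives $\lambda(G)>1-\delta^2/100$. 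A second application of Birkhoff/Egorov produces $D_2$ of measure $>1-\delta/3$ along whose orbits $\chi_{G^f}$ averages to $>1-\delta^2/10$. Set $D:=D_1\cap D_2$, $R_\delta$ the maximum of the two $R_0$'s, and define $U_R$ to be the set of $t\in[0,R]$ where both $T_t^f x$ and $T_t^f y$ lie in $K_{m_\delta}\cap G^f$. Properties (1) and (2) are immediate.

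For property (4), I shift so that $w=0$ and use the identity $d_1(T_t^f x, T_t^f y)=\|(x-y)+r\alpha\|$ where $r=N_x(t)-N_y(t)$ is the difference of ceiling-crossing counts. Since $f>\inf f>0$ on $U_R$, over a time-window of size $\lesssim R_w/\log^5 R_w$ one has $|r|\lesssim R_w/\log^5 R_w$. The diophantine condition $\alpha\in\mathcal{D}$ then gives, for any nonzero such $r$, $\|r\alpha\|\geq 1/(2q_{n+1})\geq \log^3 R_w/R_w$ (using $q_{n+1}\leq Cq_n\log q_n(\log n)^2$), so $\|(x-y)+r\alpha\|\geq \|r\alpha\|-R_w^{-1}\geq 100\,R_w^{-1}$, which is property (4).

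For property (3), the main step, I again shift to $w=0$. By (4) just established, inside the large window $[-M_w,M_w]$ with $M_w=R_w^{1/(1+\gamma)}\log^{10} R_w$, the relation $d_1=R_w^{-1}$ holds exactly on the ``synchronized'' intervals $N_x(t)=N_y(t)=:k$. A direct computation from the definition of the special flow shows that on such an interval, $d_2(T_t^f x,T_t^f y)=|f^{(k)}(x)-f^{(k)}(y)+(s'-s)|$, a quantity that depends only on $k$. Hence the measure in question is at most $\log m$ (the maximal length of a sync interval on $K_{m_\delta}$) times the number of integers $k$ with $f^{(k)}(x)-f^{(k)}(y)$ in a fixed unit window. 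By the mean value theorem and the Denjoy-Koksma estimate \eqref{koks4} applied along the controlled orbit (which stays over $G$ by construction of $D_2$), $f^{(k)}(x)-f^{(k)}(y)=f'^{(k)}(\theta_k)(x-y)$ with $|f'^{(k)}(\theta_k)|$ comparable to $k^{1+\gamma}$, growing monotonically in $|k|$. The number of $k$'s solving $|f'^{(k)}(\theta_k)|\cdot R_w^{-1}\in [c-1,c+1]$ is therefore bounded by the length $\lesssim R_w/(k^\ast)^\gamma$ of the preimage interval, where $k^\ast\sim(|c|R_w)^{1/(1+\gamma)}$; with $|c|\leq \log m$, this is $\lesssim R_w^{1/(1+\gamma)}(\log R_w)^{\gamma/(1+\gamma)}$. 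Multiplying by the $\log m$ block length gives the target $R_w^{1/(1+\gamma)}\log^3 R_w$ bound. The main obstacle here is quantitative uniformity: the Denjoy-Koksma estimate must apply simultaneously to every intermediate $\theta_k$ along the orbit of $x$ (and $y$) through the whole window, which is exactly what forces the delicate construction of the good set $G$ and its orbital version $D_2$.
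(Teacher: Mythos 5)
Your construction of $D$ by intersecting Birkhoff/Egorov sets is in the same spirit as the paper's, and your argument for property (4) via the diophantine condition $\alpha\in\mathcal{D}$ (constant shift $\Rightarrow$ same $d_1$, different shift $\Rightarrow$ jump by $\|r\alpha\|\gg R_w^{-1}$) is essentially the paper's Lemma \ref{cons:dis}. However, your argument for property (3), the heart of the lemma, has a genuine gap.

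The gap is your claim that ``$|f'^{(k)}(\theta_k)|$ is comparable to $k^{1+\gamma}$, growing monotonically in $|k|$.'' Neither part is justified, and the first part is false in general. The Denjoy--Koksma bound \eqref{koks4} gives $f'(x^{k}_{\min})-8q_{s+1}^{1+\gamma}\leq|f'^{(k)}(x)|\leq f'(x^{k}_{\min})+8q_{s+1}^{1+\gamma}$, which is a two-sided estimate whose lower half is vacuous unless $x^{k}_{\min}\ll q_{s+1}^{-1}$; in the generic case $x^{k}_{\min}\sim k^{-1}$ the lower bound degenerates. More concretely, $f'^{(k)}$ has a zero in every interval of the partition $\{-i\alpha\}_{i=0}^{k-1}$ (this is the very observation that makes the paper's Lemma \ref{smd} work), so there are values of $k$ for which $\theta_k$ sits close to such a zero and $|f'^{(k)}(\theta_k)|$ is far below $k^{1+\gamma}$. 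Your ``good set'' $G$ (defined by the closest-approach condition $x^{k}_{\min}\gtrsim k^{-1}\log^{-100}k$) only controls an \emph{upper} bound on $|f'^{(k)}|$; it says nothing about the lower bound. Also, $\theta_k$ is a mean-value point between $x$ and $y$, not on the orbit of $x$, so membership in $G$ does not transfer to it without a second-derivative correction (which is how the paper's Lemma \ref{sub2} proceeds via \eqref{koks5}).

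What the paper does differently, and what is essential: it introduces the sets $W_t$ in \eqref{wn} (and base sets $V_n$), which are precisely the set of $x$ for which the derivative Birkhoff sum \emph{is} large, of order $|N(x,t)|^{1+\gamma}/\mathrm{polylog}$. Lemma \ref{smd} shows $\mu(V_n^c)$ decays polylogarithmically, and the appendix machinery (Proposition \ref{prob}, Lemma \ref{con}, Proposition \ref{mpr}) upgrades this to a quantitative statement that, for $x$ in a large-measure set $W$, the Lebesgue measure of times $t\in[-T,T]$ with $x\notin W_t$ is at most $2T\log^{-3}T$. These ``bad'' times are then charged directly to the measure bound in property (3), while at the ``good'' times the argument of your type (now made rigorous by Lemma \ref{sub2}) shows $d_2\geq 1$ once $|t|\gtrsim U^{1/(1+\gamma)}\log^4 U$. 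Without this exceptional-set analysis your bound cannot be closed: a single $k$ with anomalously small $|f'^{(k)}(\theta_k)|$ produces a long run of synchronized times with $d_2<1$, which your counting does not see.
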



The proof of Lemma \ref{techn} is technical, so before we prove it, let us show how it implies Proposition \ref{techn1}, and therefore also Theorem \ref{main2}.

\begin{proof}[Proof of Proposition \ref{techn1}]
Notice that $(A)$ and $(B)$ in Proposition \ref{techn1} follow by 1. and 2. in Lemma \ref{techn}. So it remains to prove $(C)$ in Proposition \ref{techn1} asuming that 3. and 4.  in Lemma \ref{techn} hold.

Fix $j$ as in $(C)$. Divide the interval $[0,R]$ into intervals $I_1,...,I_k$ of length $j^92^{\frac{j}{1+|\gamma|}}$. Since $2^j\leq \frac{R^{1+|\gamma|}}{\log^{15}R}$ it follows that $k>1$. Consider only those $I_i$, for which  $U_R\cap A_j^{R,m}(x,y)\cap I_i \neq \emptyset$. For such $i$ let $w\in U_R\cap A_j^{R,m}(x,y)\cap I_i$. By \eqref{anj} we have 
\begin{equation}\label{giho}
R_w^{-1}=d_1(T_w^fx,T_w^fx)\in[2^{-j-1},2^{-j}].
\end{equation}
 So 
\begin{equation}\label{joa}
\frac{2R_w}{\log^5R_w}\geq |I_i|.
\end{equation}
Then by 4. for $T_w^fx, T_w^fy$, \eqref{joa} and \eqref{anj}, we have 
\begin{multline*}
U_R\cap A^{R,m}_j(x,y)\cap I_i\subset\\
 \left\{t\in I_i\;:\;d(T_{t+w}^fx, T_{t+w}^fy)<2m^{-1},
d_1(T_{t+w}^fx, T_{t+w}^fy)=d_1(T_{w}^fx, T_{w}^fy)\right\}.
\end{multline*}
Moreover by 3. and \eqref{giho}, we get
\begin{multline*}
\left|\{t\in I_i\;:\;d(T_{t+w}^fx, T_{t+w}^fy)<
2m^{-1}, 
d_1(T_{t+w}^fx, T_{t+w}^fy)=
d_1(T_{w}^fx, T_{w}^fy)\}\right|\\
\leq 2^{\frac{j}{1+|\gamma|}}j^5.
\end{multline*}

Therefore, summing over all $i\in \{1,...k\}$  we get 
$$
U_R\cap A^{R,m}_j(x,y)\leq \frac{R}{2^{\frac{j}{1+|\gamma|}}j^9}2^{\frac{j}{1+|\gamma|}}j^5\leq \frac{R}{j^3}.
$$
This gives $(C)$ in Proposition \ref{techn1}.
\end{proof}

So it remains to prove Lemma \ref{techn}. Properties 1. and 2. will  follow by Birkhoff theorem type reasoning, property 3. is the most difficult and crucial, property 4. is a general fact for this dynamics. We will start by the following lemma, which also gives property 4.  In the lemma below denote $d_1(x,y)=R^{-1}_{x,y}$.
\begin{lm}\label{cons:dis} There exists $N_0\in \N$ such that for every $x,y\in \T^f$  satisfying $d^f(x,y)\leq \frac{1}{N_0}$, we have  
for  every $t\in [-\frac{R_{x,y}}{\log^4R_{x,y}},
\frac{R_{x,y}}{\log^4R_{x,y}}]$ either 
$d_1(T_t^fx,T_t^fy)=R_{x,y}^{-1}$ or 

$$
d_1(T_t^fx,T_t^fy)\geq 100R^{-1}_{x,y}.
$$
\end{lm}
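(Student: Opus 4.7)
The plan is to reduce the lemma to a Diophantine lower bound for $\|k\alpha\|$ applied to a controlled range of $k$. I would first write $T^f_t x = (x_1 + n_x(t)\alpha,\, s_t)$ and $T^f_t y = (y_1 + n_y(t)\alpha,\, s_t')$, where $n_x(t), n_y(t) \in \Z$ denote the signed number of returns to the base by time $t$. Setting $k(t) := n_x(t) - n_y(t)$, one has
\[
d_1(T^f_t x,\, T^f_t y) \;=\; \|(x_1 - y_1) + k(t)\alpha\|,
\]
and when $k(t) = 0$ this is exactly $\|x_1 - y_1\| = R_{x,y}^{-1}$, the first alternative. The task therefore reduces to showing that whenever $k(t) \neq 0$ for some $t$ in the window $I := [-R_{x,y}/\log^4 R_{x,y},\, R_{x,y}/\log^4 R_{x,y}]$, one has $\|(x_1 - y_1) + k(t)\alpha\| \geq 100/R_{x,y}$.

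Next I would bound $|k(t)|$ uniformly for $t \in I$. Since the Kochergin roof satisfies $\inf_\T f > 0$, the number of returns of a $T^f$-orbit in time $|t|$ is at most $C(|t|+1)$ with $C = C(\inf_\T f)$, up to an additive term controlled by the initial fiber height. The ``fiber-only'' case where $f(x_1), f(y_1)$ are comparable to or larger than $|t|$ is handled separately: because $|s - s'| \leq d^f(x,y) \leq 1/N_0$, both orbits remain inside their initial fibers, so $k(t) = 0$ trivially. In the remaining generic case, once $R_{x,y} \geq N_0$ is large, we conclude $|k(t)| \leq M := C\, R_{x,y}/\log^4 R_{x,y}$.

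Then I would invoke the Diophantine condition $\alpha \in \mathcal{D}$. For each $0 < |k| \leq M$, continued-fraction theory gives $\|k\alpha\| \geq \|q_s\alpha\| \geq 1/(2q_{s+1})$ where $q_s$ is the largest convergent denominator with $q_s \leq M$. Combining the $\mathcal{D}$-bound $q_{s+1} \leq C(\alpha)\, q_s \log q_s (\log s)^2$ with $q_s \leq M$ and the elementary estimate $s = O(\log M)$ yields
\[
\|k\alpha\| \;\geq\; \frac{1}{C' M \log M (\log\log M)^2} \;\geq\; \frac{\log^3 R_{x,y}}{C''\, R_{x,y}\, (\log\log R_{x,y})^2}.
\]
The triangle inequality on $\T$ then produces $\|(x_1 - y_1) + k\alpha\| \geq \|k\alpha\| - \|x_1 - y_1\| \geq 100/R_{x,y}$, provided $N_0$ is chosen so that $\log^3 R_{x,y}/(\log\log R_{x,y})^2 \geq 200 C''$ for all $R_{x,y} \geq N_0$.

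The main obstacle is the logarithmic calibration: the exponent $4$ in the window $R_{x,y}/\log^4 R_{x,y}$ is tuned precisely so that, after absorbing the $\log M \cdot (\log\log M)^2$ inflation from the $\mathcal{D}$-bound on $q_{s+1}$, a factor $\log^3 R_{x,y}/(\log\log R_{x,y})^2 \to \infty$ still remains, making the Diophantine lower bound dominate $\|x_1 - y_1\| = 1/R_{x,y}$ by an arbitrary multiplicative constant (in particular $100$); a shorter window with only $\log R_{x,y}$ in the denominator would not leave enough cushion and the sharp dichotomy could fail near convergent denominators.
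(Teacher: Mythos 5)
Your proposal is correct and follows essentially the same route as the paper: reduce to the dichotomy on the index difference $k(t)=0$ versus $k(t)\neq 0$, bound $|k(t)|=O(R_{x,y}/\log^4R_{x,y})$ using $\inf_\T f>0$, apply the Diophantine lower bound on $\|k\alpha\|$ implied by $\alpha\in\mathcal{D}$, and finish with the reverse triangle inequality. The only difference is cosmetic: you carry the $(\log\log)^2$ factor explicitly, whereas the paper absorbs it into the cruder bound $\|m\alpha\|\geq C(\alpha)/(m\log^2 m)$, but both leave the same $\to\infty$ cushion in the $\log^4$ window, so the argument closes identically.
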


\begin{proof} By diophantine assumptions on $\a$ we have
for every $m\in \Z$
\begin{equation}\label{dia}
\inf_{|s|\leq m}\|m\a\|\geq \frac{C(\a)}{m\log^2m}.
\end{equation}
Take $N_0$ such that $\log N_0\gg C(\a)$. Let $x_0, y_0\in\T$ denote the first coordinates of $x,y\in \T^f$.
For $t\in  [-\frac{R_{x,y}}{\log^4R_{x,y}},
\frac{R_{x,y}}{\log^4R_{x,y}}]$, the first coordinates of $T_t^fx$ and $T_t^fy$ are respectively $x_0+m_t\a$, $y_0+r_t\a$ for some $m_t,r_t\in \N$. Since $f>c$ it follows that $m_t,r_t<c^{-1}t$. Therefore we have either $m_t=r_t$ in which case 
$d_1(T^f_tx,T^f_ty)=\|x_0-y_0\|=d_1(x,y)$ or by \eqref{dia} and $|m_t-r_t|<2c^{-1}t\leq \frac{2R_{x,y}}{c\log^4R_{x,y}} $ we get  
$$
d_1(T^f_tx,T^f_ty)\geq \|(m_t-r_t)\a\|-|x_0-y_0|\geq \frac{c\log^2R_{x,y}}{R_{x,y}}
\geq 100R^{-1}_{x,y}.
$$
This finishes the proof.
\end{proof}

Therefore 4. in Lemma \ref{techn} follows. Notice that 4. does not depend on any other quantities and is a general fact for this dynamics. For properties 1. 2. 3. we need to define the se $D$ and $U_R$.

\subsubsection{Construction of $D$ and $U_R$ in Lemma \ref{techn}.}
Fix $\delta>0$. To simplify notation by $x_0\in \T$ we denote the first coordinate of $x\in \T^f$.
 We will construct first the set $D=D_\delta$
in  Lemma \ref{techn}.

Define first
$$
S_n:=\left\{x\in \T^f\;:\;\bigcup_{t=-q_n\log q_n}^{q_n\log q_n}T^f_t(x)\notin \left[-\frac{1}{q_n\log^3 q_n},
\frac{1}{q_n\log^3 q_n}\right]^f\right\}.
$$
Notice that $\mu(S_n)\geq 1- \frac{2}{\log^2q_n}$.
Let moreover, 
\begin{equation}\label{gse}
S:=\left(\bigcap_{n\geq n_1}S_n\right)\cap\{x\in M\;:\; x_0\notin[-n_1^{-2},n_1^{-2}]\},
\end{equation}
where $n_1=n_1(\delta)\in \N$ is such that $\mu(S)\gg 1-\delta$.

Fix a number $P_\gamma>100\gamma^{-1}$. Let for $t\in \R$,  
\begin{equation}\label{wn}
W_t:=\left\{x\in \T^f\:\: |f'^{(N(x,t))}(x_0)|\geq \frac{|N(x,t)|^{1+\gamma}}{\log^{P_\gamma}|N(x,t)|}\right\}.
\end{equation}

We have the following proposition (see Proposition in \cite{KRV}):

\begin{pr}\label{mpr} There exists $W\subset \T^f$, $\mu(W)\geq 1-\delta^{10}$ and $n_2=n_2(\delta)\in \N$ such that for every $x\in W$ and $T\geq n_2$ we have
\begin{equation}\label{eq:m}
\left|\{t\in[-T,T]\;: x\in W_{t}\;\}\right| \geq T(1-\log^{-3}T).
\end{equation}
\end{pr}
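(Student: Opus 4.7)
My plan is to reduce to a statement about the base circle, then combine a base measure estimate with a Markov--Borel--Cantelli argument lifted back up to the flow.

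The first step is reduction to the base. Since the condition defining $W_t$ depends only on the base coordinate $x_0 \in \T$ and on the return count $M := N(x,t)$, one has $x \notin W_t$ if and only if $x_0 \in \widetilde B_M$, where
\[\widetilde B_M := \bigl\{x_0 \in \T \,:\, |f'^{(M)}(x_0)| < |M|^{1+\gamma}/\log^{P_\gamma}|M|\bigr\}.\]
For fixed $x_0$, as $t$ runs through $[-T,T]$ the integer $N(x,t)$ takes each value $M$ during a time interval of length $f(T^M x_0)$, so that
\[\bigl|\{t \in [-T,T] \,:\, x \notin W_t\}\bigr| \;\approx\; \sum_{|M| \leq K} f(T^M x_0)\, \1_{\widetilde B_M}(x_0),\]
with $K \approx T$ (recall $\int_\T f\,d\lambda = 1$).

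The second and hardest step is to show that $\lambda(\widetilde B_M) \leq \log^{-Q}|M|$ for a suitably large exponent $Q = Q(P_\gamma)$. A direct application of Lemma \ref{koksi2} yields $|f'^{(M)}(x_0)| \geq f'((x_0)^M_{min}) - 8 q_{s+1}^{1+\gamma}$ with $|M|\in[q_s,q_{s+1}]$, but for $\alpha \in \mathcal{D}$ the error $q_{s+1}^{1+\gamma}$ is of the same order $M^{1+\gamma}$ as the generic main term up to powers of $\log M$, so the naive bound does not force $\widetilde B_M$ down to polylogarithmic measure. The required refinement is to decompose $M$ via its Ostrowski expansion $M = \sum_k b_k q_k$, apply Lemma \ref{koksi2} at each scale $q_k$ to $f'^{(b_k q_k)}$, and exploit that for $\alpha\in\mathcal{D}$ the partial orbits are near-uniformly distributed at scale $1/q_{k+1}$: failure at scale $k$ forces $x_0$ to avoid a union of $\sim b_k q_k$ intervals of width $\sim (q_{k+1}\log^{O(1)} q_{k})^{-1}$, an event of polylogarithmically small measure, and summing over scales $k$ gives the required bound on $\lambda(\widetilde B_M)$.

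The third step combines the base estimate with a Fubini--Markov argument. Since $f \sim x^{-\gamma}$ is integrable with $\int_A f\,d\lambda \leq C\lambda(A)^{1-\gamma}$ (rearrangement, since $f$ is essentially decreasing near its singularity), $T$-invariance of Lebesgue gives
\[\int_\T \sum_{|M|\leq K} f(T^M x_0)\,\1_{\widetilde B_M}(x_0)\, d\lambda(x_0) \;=\; \sum_{|M|\leq K} \int_{T^M \widetilde B_M} f\, d\lambda \;\leq\; C\, K \log^{-Q(1-\gamma)} K.\]
Markov with threshold $K/\log^3 K$ then gives a deviation probability of order $\log^{3-Q(1-\gamma)}K$, which is summable along $K_n = 2^n$ once $Q(1-\gamma) > 4$ (achievable because the freedom $P_\gamma > 100/\gamma$ propagates to $Q$). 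Borel--Cantelli plus interpolation between dyadic scales yields, for almost every $x_0$, that $\sum_{|M|\leq K} f(T^M x_0)\,\1_{\widetilde B_M}(x_0) \leq K/\log^3 K$ for all sufficiently large $K$; Egorov's theorem then extracts a uniform set $W \subset \T^f$ of measure at least $1 - \delta^{10}$ and a uniform threshold $n_2(\delta)$, which after translating the $K$-bound to a $T$-bound (again by the ergodic theorem for $f$) yields the proposition. The technical core is the base estimate of the second paragraph, which is carried out in detail in \cite{KRV}.
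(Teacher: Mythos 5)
Your high-level architecture is correct and matches the paper's: reduce to the first coordinate, establish a polylogarithmic measure bound for the bad set in the base, and then lift this via a Borel--Cantelli type argument to obtain the uniform set $W$ and threshold $n_2(\delta)$. Your third step (Fubini--Markov plus dyadic Borel--Cantelli plus interpolation, then Egorov) is a perfectly serviceable alternative to the paper's route through Lemma~\ref{lio} (an $L^2$/Lyons summability criterion applied along a subsequence $N_k$); both work as long as the exponent in the measure bound exceeds a fixed threshold, and as you note the freedom in choosing $P_\gamma$ makes that possible.

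The genuine discrepancy, and the place where your proposal does not close, is the measure bound $\lambda(\widetilde B_M)\lesssim\log^{-Q}|M|$. You correctly observe that a direct lower bound $|f'^{(M)}(x)|\geq f'(x^M_{\min})-8q_{s+1}^{1+\gamma}$ from Lemma~\ref{koksi2} is useless here, because on $\mathcal{D}$ the error is of the same size as the target threshold $M^{1+\gamma}/\log^{P_\gamma}M$. But the paper's Lemma~\ref{smd} does not repair this through an Ostrowski scale decomposition. Instead it exploits the shape of $f'^{(n)}$ on each continuity interval $I$ of the partition $\mathcal{I}$ induced by $\{-i\alpha\}_{i=0}^{n-1}$: since $f''^{(n)}>0$, $f'^{(n)}$ is strictly increasing on $I$ and tends to $\mp\infty$ at the endpoints, hence has a unique zero $x_I\in I$; then $|f'^{(n)}(x)|=|f''^{(n)}(\theta)|\,|x-x_I|$ for some $\theta\in I$, and Lemma~\ref{koksi2} gives a uniform lower bound on $|f''^{(n)}|$ of order $n^{2+\gamma}$ up to logarithms. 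Thus $\widetilde B_n\cap I$ is an interval around $x_I$ of width $\lesssim(n\log^{O(1)}n)^{-1}$, and summing over the $n$ intervals gives $\lambda(\widetilde B_n)\lesssim\log^{-O(1)}n$. This sidesteps any scale-by-scale analysis and the attendant issue of cancellation between scales.

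Your Ostrowski sketch, by contrast, has two problems. First, it never confronts cancellation: $|f'^{(M)}|$ can be small even if each contribution $f'^{(b_k q_k)}$ at the various Ostrowski scales is large, so "$x_0$ is bad at scale $k$" does not decompose the event $\{|f'^{(M)}|\text{ small}\}$ into controllable pieces the way a single-scale argument would. Second, the set inclusion is inverted: you say failure at scale $k$ forces $x_0$ to \emph{avoid} a union of $\sim b_kq_k$ intervals of width $\sim(q_{k+1}\log^{O(1)}q_k)^{-1}$ and call that "an event of polylogarithmically small measure," but that union covers only $O(b_kq_k/(q_{k+1}\log^{O(1)}q_k))=O(\log^{-O(1)}q_k)$ of the circle, so its complement has measure close to $1$. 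One would want "lie in" rather than "avoid," and even with that fix the cancellation issue remains open. Replacing this step with the zero-finding plus second-derivative argument of Lemma~\ref{smd} is both necessary and simpler.
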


We will give the proof of Proposition \ref{mpr} in the Appendix (it follows the same lines as the proof of Proposition in \cite{KRV}). 
We have now defined all sets, which are needed in the definition of $B$ and $U_R$ in Lemma \ref{techn}. 

Consider the set 
\begin{equation}\label{defg}
G:= S \cap W\cap \{x\in \T^f\;:\; f(x_0)<\delta^{-\frac{3}{1-\gamma}}\},
\end{equation}
where $S$ is from \eqref{gse} and $W$ from Proposition \ref{mpr}. Notice that $\mu(G)\geq 1-\delta^2$. Therefore:\\
\begin{multline}
\text{ there exists a set }D=D_\delta\subset \T^f, \mu(D)\geq 1-\delta
\text{ and there exists }\\ 
n_3=n_3(\delta)\in \N\text{ such that for every }x\in D\text{, and every }R\geq n_3\text{, we have } 
\end{multline}
\begin{equation}\label{yeg2}
\left|\{t\in[0,R]\;:\;T^f_tx\in G\}\right|\geq (1-\delta)R.
\end{equation}

Define 
\begin{equation}\label{un}U_R(x,y):=\left\{t\in[0,R]
\;:\;T_t^fx,T_t^fy\in G\right\}.
\end{equation}

Notice that (for sufficiently large $R_\delta,m_\delta$) 1. and 2. in Lemma \ref{techn} are straightforward from the definition of $U_R$, $D$ and $G$. Moreover 4. follows from Lemma \ref{cons:dis}. Therefore we only need to show 3. in Lemma \ref{techn}.

\subsubsection{Proof of 3. in Lemma \ref{techn}} 
Notice that by the definition of $U_R$ (see \eqref{un}) and $G$ (see \eqref{defg}), 3. follows automatically by the following lemma. Recall that $x_0\in\T$ denotes the first coordinate of $x\in \T^f$.

\begin{lm}\label{cruc} For  every $x,y\in G$ such that $d_1(x,y)$ sufficiently small we have (for $U=d_1(x,y)^{-1}$)
\begin{multline*}
|\{t\in [-U^{\frac{1}{1+|\gamma|}}\log^{10}U,
U^{\frac{1}{1+|\gamma|}}\log^{10}U]\;:
\;d_1(T^f_{t}(x),T^f_{t}(y))=U^{-1}\\\text{ and }
d_2(T^f_{t}(x),T^f_{t}(y))<1
\}|<U^{\frac{1}{1+|\gamma|}}\log^{5}U;
\end{multline*}
\end{lm}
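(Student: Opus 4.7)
The plan is to reduce both conditions ($d_1 = U^{-1}$ and $d_2 < 1$) to a single constraint on the Birkhoff sum difference $\Delta(N) := f^{(N)}(x_0) - f^{(N)}(y_0)$, and then to invoke the lower bound on $|f'^{(N)}(x_0)|$ guaranteed by $G \subset W$ (via Proposition \ref{mpr}) together with the Denjoy--Koksma estimate \eqref{koks4} to force $|N|$, and hence the corresponding measure of $t$, to be small.

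First, I claim that the condition $d_1(T_t^f x, T_t^f y) = U^{-1}$ forces the discrete iteration counts to coincide, $N(x,t) = N(y,t) =: N$. Indeed, if $N(x,t) - N(y,t) = m \neq 0$, then by the Diophantine bound $\|m\alpha\| \geq c/(|m|\log^2|m|)$ used in Lemma \ref{cons:dis}, combined with $|m| \lesssim t \leq U^{1/(1+\gamma)}\log^{10}U \ll U$, the first-coordinate distance would be strictly larger than $U^{-1}$, a contradiction. Since $x, y \in G \subset \{f(x_0) < \delta^{-3/(1-\gamma)}\}$, the initial second coordinates are bounded by a constant $C_\delta$, so $d_2 < 1$ reduces to $|\Delta(N)| \leq C_\delta$. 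On the generic part of the orbit---ensured by $x, y \in S$ from \eqref{gse}, which keeps orbits away from the singularity at the relevant time scale---the mean value theorem gives $\Delta(N) = f'^{(N)}(\theta_N)(x_0 - y_0)$ for some $\theta_N \in [y_0, x_0]$, so the condition becomes $|f'^{(N)}(\theta_N)| \leq C_\delta U$.

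Since $x \in G \subset W$, Proposition \ref{mpr} applied with $T = U^{1/(1+\gamma)}\log^{10}U$ implies that outside a subset of $[-T,T]$ of measure bounded by $U^{1/(1+\gamma)}$ times a polylog factor, we have $x \in W_t$, i.e.\ $|f'^{(N)}(x_0)| \geq |N|^{1+\gamma}/\log^{P_\gamma}|N|$. Applying \eqref{koks4}, which expresses $|f'^{(N)}(z)|$ as $f'(z^N_{min})$ plus an error $O(q_{s+1}^{1+\gamma})$ dominated by the main term when the $W_t$ lower bound is active, and using that $|\theta_N - x_0| \leq U^{-1}$ is much smaller than $x_0^N_{min}$ (controlled from below by the Diophantine condition on $\alpha$), one transfers the lower bound to $|f'^{(N)}(\theta_N)| \asymp |f'^{(N)}(x_0)|$. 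Combining with the upper bound yields $|N|^{1+\gamma}/\log^{P_\gamma}|N| \lesssim U$, so $|N| \lesssim U^{1/(1+\gamma)}$ up to polylog factors in $U$; since $f^{(N)}(x_0)$ grows essentially linearly in $N$ on $G$, the corresponding set of $t$-values has measure at most $U^{1/(1+\gamma)}\log^5 U$ (absorbing polylog exponents into constants as needed).

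The main obstacle is the final transfer, from $|f'^{(N)}(x_0)|$ to $|f'^{(N)}(\theta_N)|$: the intermediate point $\theta_N$ could a priori produce a closer orbit approach to $0$ than $x_0$'s, or an interval $[y_0 + k\alpha, x_0 + k\alpha]$ could straddle the singularity, making MVT inapplicable. I plan to address the first by showing that on $G$, the orbits of $x_0$ and $\theta_N$ reach their closest approach to $0$ at the same iterate $k$, since $|x_0 - y_0| \leq U^{-1}$ is smaller than the minimum spacing $\|q_s\alpha\|$ between distinct iterates on the relevant time scale; consequently the two closest-approach values differ by at most $U^{-1}$, which is much smaller than either. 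The straddling case handles itself: a singularity crossing causes $\Delta(N)$ to jump by much more than $C_\delta$, so such $N$ are automatically excluded from the set we are counting.
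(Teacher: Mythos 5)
Your overall strategy is the one the paper follows: reduce the condition $d_2(T_t^f x, T_t^f y) < 1$ (together with the observation $N(x,t) = N(y,t)$, which the paper uses implicitly) to a bound of constant size on $\Delta(N) = f^{(N)}(x_0) - f^{(N)}(y_0)$; then use the lower bound $|f'^{(N)}(x_0)| \geq |N|^{1+\gamma}/\log^{P_\gamma}|N|$ available on $W_t$ to force $|\Delta(N)|$ to be large once $|N|$ exceeds $U^{1/(1+\gamma)}$ up to polylog factors, thereby confining the bad $t$ to a short interval; and control $\{t : x \notin W_t\}$ via Proposition \ref{mpr}. The paper encapsulates the middle step in Lemma \ref{sub2}, which you re-derive inline.

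However, your treatment of what you correctly flag as the main obstacle — transferring the $W_t$ lower bound from $|f'^{(N)}(x_0)|$ to $|f'^{(N)}(\theta_N)|$ — does not work as written. You invoke \eqref{koks4} to write $|f'^{(N)}(z)|$ as $f'(z^N_{\min})$ plus an error $O(q_{s+1}^{1+\gamma})$ and assert that the error is ``dominated by the main term when the $W_t$ lower bound is active.'' That cannot be right: since $q_s \leq N < q_{s+1}$, one has $q_{s+1}^{1+\gamma} > N^{1+\gamma}$, whereas the $W_t$ lower bound on $|f'^{(N)}(x_0)|$ is only $N^{1+\gamma}/\log^{P_\gamma}N$ — smaller than the error by an unbounded polylogarithmic factor. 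So membership in $W_t$ does not make $f'(x_0^N_{\min})$ the leading term of $f'^{(N)}(x_0)$, and comparability of the closest approaches $x_0^N_{\min}$ and $\theta_N^N_{\min}$ does not translate into comparability of the Birkhoff derivative sums. The paper's Lemma \ref{sub2} avoids this by going to second order in the difference of Birkhoff sums,
$$|\Delta(N)| \;\geq\; |f'^{(N)}(x_0)|\,\|x_0-y_0\| \;-\; |f''^{(N)}(\theta)|\,\|x_0-y_0\|^2,$$
and controlling $|f''^{(N)}(\theta)|$ through \eqref{koks5} together with the $S$-condition (which keeps the orbit of $\theta$ away from $0$), getting $|f''^{(N)}(\theta)| \leq N^{2+\gamma}\log^{O(1)}N$. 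The decisive gain over your version is the extra factor $\|x_0-y_0\|=U^{-1}$ on the error term: the ratio of error to main term is then at most $NU^{-1}\log^{O(1)}N \leq U^{-\gamma/(1+\gamma)}\log^{O(1)}U \to 0$, with no need for $f'(z^N_{\min})$ to dominate $f'^{(N)}(z)$. Equivalently, you could retain your first-order mean value theorem but bound $|f'^{(N)}(x_0)-f'^{(N)}(\theta_N)| \leq |f''^{(N)}(\xi)|\,U^{-1}$ by the same quantity — it is the second derivative estimate \eqref{koks5}, not \eqref{koks4}, that makes the transfer go through. With that replacement the rest of your outline is sound.
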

\begin{proof} By the definition of special flow every $t$ which belongs to the set above has to satisfy the following:
$$
|f^{(N(x,t))}(x_0)-f^{(N(x,t))}(y_0)|<2.
$$

Moreover, if $|n|>U^{\frac{1}{1+|\gamma|}}\log^{5}U$ and $x\in W_t$ (see \eqref{wn}), then by Lemma \ref{sub2},  
$$
|f^{(N(x,t))}(x_0)-f^{(N(x,t))}(y_0)|\geq 10.
$$
It remains to notice, that since $x\in G\subset W$, by Propostion \ref{mpr} we get
$$
\left|\{t\in [-U^{\frac{1}{1+|\gamma|}}\log^{10}U,
U^{\frac{1}{1+|\gamma|}}\log^{10}U]\;:\:
x\in W_t 
\right|\leq U^{\frac{1}{1+|\gamma|}}\log^{4}U.
$$
This finishes the proof.
\end{proof}

This finishes the proof of 3. in Lemma \ref{techn} and hence also the proof of Theorem \ref{main2}




\begin{lm}\label{sub2}Let $x,y\in G$ such that $d_1(x,y):=U^{-1}$ is small. \\
Then for every \\
$t \in\left[-U^{\frac{1}{1+|\gamma|}}\log^{10} U,-U^{\frac{1}{1+|\gamma|}}\log^{4}U\right]\cup \left[U^{\frac{1}{1+|\gamma|}}\log^{4}U,
U^{\frac{1}{1+|\gamma|}}\log^{10}U\right]$\\
 such that $x\in W_{t}$, we have
$$ 
|f^{(N(x,t))}(x_0)-f^{(N(x,t))}(y_0)|\geq 10.
$$
\end{lm}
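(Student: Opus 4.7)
The strategy is to combine the mean value theorem with the Denjoy--Koksma control of $f'^{(N)}$ from Lemma~\ref{koksi2}. By the cocycle identity it suffices to treat $t>0$; set $N=N(x,t)$ and use the Birkhoff regularity built into $G$ to get $N\asymp t$, so that $N$ lies in $[c\, U^{1/(1+\gamma)}\log^{4}U,\,C\, U^{1/(1+\gamma)}\log^{10}U]$. Since $\gamma>0$ this forces $N\ll U$, and hence the orbit spacing $\sim 1/N$ dwarfs $\|x_0-y_0\|=U^{-1}$.

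By the mean value theorem there is $\xi\in[x_0,y_0]$ with $f^{(N)}(x_0)-f^{(N)}(y_0)=f'^{(N)}(\xi)(x_0-y_0)$, so the goal reduces to a lower bound on $|f'^{(N)}(\xi)|/U$. The first step is to show $|f'^{(N)}(\xi)|\asymp|f'^{(N)}(x_0)|$. By Lemma~\ref{koksi2}, both quantities agree, modulo an error of order $q_{s+1}^{1+\gamma}$ (controlled by the Diophantine assumption $\mathcal{D}$), with $f'$ evaluated at the closest orbit approach $u^{N}_{\min}$. Since $x\in G\subset S$, the closest approach $x^{N}_{\min}$ is bounded below by a quantity $\gg U^{-1}$; together with $\|x_0-y_0\|\ll 1/N$ this forces the closest approach of $\xi$ to be realised at the same index $k^{*}$ as for $x_0$ and to differ from $x^{N}_{\min}$ by at most $U^{-1}$. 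Using $f'(u)\sim -\gamma u^{-\gamma-1}$ near $0$ one then gets $f'(\xi^{N}_{\min})\asymp f'(x^{N}_{\min})$, and the Denjoy--Koksma correction is absorbed into this comparison.

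The hypothesis $x\in W_t$ yields $|f'^{(N)}(x_0)|\ge N^{1+\gamma}/\log^{P_\gamma}N$; plugging in the lower bound on $N$ produces $|f'^{(N)}(x_0)|\ge U\,(\log U)^{c}$ for some positive $c$ (this is where the calibration of the log-exponents in the time range, the definition of $W_t$, and the choice $P_\gamma>100\gamma^{-1}$ enter), and dividing by $U$ gives the required $|f^{(N)}(x_0)-f^{(N)}(y_0)|\ge 10$ for $U$ sufficiently large. The main obstacle is the derivative-transfer step: one has to show that the dominant term $f'(u^{N}_{\min})$ is essentially unchanged on $[x_0,y_0]$ and that the Denjoy--Koksma correction $q_{s+1}^{1+\gamma}$ stays subdominant against the main term coming from $W_t$, both of which require the full force of $x\in S$ and $\alpha\in\mathcal{D}$.
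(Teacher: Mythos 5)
Your overall setup (reduce to $t>0$, use $N(x,t)\asymp t$, observe $N\ll U$ since $\gamma>0$, and try to lower-bound the difference of Birkhoff sums by a derivative estimate) matches the beginning of the paper's argument. But there is a genuine gap in your ``derivative-transfer step,'' and it is not a small one: the route you propose does not survive the size of the Denjoy--Koksma error term.

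You invoke the mean value theorem to write $f^{(N)}(x_0)-f^{(N)}(y_0)=f'^{(N)}(\xi)(x_0-y_0)$ and then try to show $|f'^{(N)}(\xi)|\asymp|f'^{(N)}(x_0)|$ by comparing both to $f'$ at the closest orbit approach via \eqref{koks4}. The trouble is quantitative. In \eqref{koks4} the correction is of size $8\,q_{s+1}^{1+|\gamma|}$ with $q_s\le N<q_{s+1}$; under $\alpha\in\mathcal D$ this is of order $N^{1+\gamma}\,(\log N)^{1+\gamma}(\log\log N)^{2+2\gamma}$, i.e.\ it \emph{exceeds} $N^{1+\gamma}$ by a polylog factor. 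The lower bound you are trying to push through, coming from $x\in W_t$, is only $|f'^{(N)}(x_0)|\ge N^{1+\gamma}/(\log N)^{P_\gamma}$ with $P_\gamma>100\gamma^{-1}$ --- which is \emph{smaller} than the Denjoy--Koksma correction by a huge polylog factor. So from $|f'^{(N)}(x_0)|\ge N^{1+\gamma}/(\log N)^{P_\gamma}$ and $|f'^{(N)}(x_0)-f'(x^N_{\min})|\lesssim q_{s+1}^{1+\gamma}$ you cannot conclude that $|f'(x^N_{\min})|$ is comparably large; the inequality $|f'(x^N_{\min})|\ge N^{1+\gamma}/(\log N)^{P_\gamma}-8q_{s+1}^{1+\gamma}$ has a negative right-hand side. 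Hence the comparison $f'(\xi^N_{\min})\asymp f'(x^N_{\min})$, even if true, buys you nothing, and the claim that ``the Denjoy--Koksma correction is absorbed into this comparison'' is exactly where the argument breaks.

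The paper avoids this entirely by never passing through $f'(x^N_{\min})$. Instead it expands one order higher: for $\theta\in[x_0,y_0]$,
\[
|f^{(N)}(x_0)-f^{(N)}(y_0)|\;\ge\;|f'^{(N)}(x_0)|\,\|x_0-y_0\|-|f''^{(N)}(\theta)|\,\|x_0-y_0\|^2 ,
\]
bounds the second Birkhoff derivative uniformly on $[x_0,y_0]$ via \eqref{koks5} and $x\in S$ by $|f''^{(N)}(\theta)|\le t^{2+\gamma}(\log t)^4$, and then uses $x\in W_t$ \emph{only} for the main term. The decisive point is that the error term carries an extra factor $\|x_0-y_0\|=U^{-1}$ relative to the main term; since $t\le U^{1/(1+\gamma)}(\log U)^{10}$ and $\gamma>0$ force $t\,(\log t)^{4+P_\gamma}\ll U$, the quadratic error is $\le\frac{1}{10}$ of the main term. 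This $U^{-1}$ gain is exactly what your MVT route discards, because it forces you to control $f'^{(N)}$ pointwise at $\xi$, rather than controlling only the \emph{variation} of $f'^{(N)}$ across $[x_0,y_0]$. If you want to salvage your approach, the correct repair is precisely to bound $|f'^{(N)}(\xi)-f'^{(N)}(x_0)|\le|f''^{(N)}(\theta)|\,U^{-1}$ --- but that is the paper's second-order Taylor estimate in disguise, not the closest-approach comparison you proposed.
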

\begin{proof} Let us conduct the proof for $t\geq 0$, the case $t<0$ is analogous. Recall that $x_0$ denotes the first coordinate of $x$. Let $k\in \N$ be unique such that
$$\frac{1}{q_{k+1}}\leq U^{-1}=\|x_0-y_0\|<\frac{1}{q_k}.$$
 Since $\inf_\T f>c$, we have $N(x,t)\leq c^{-1} t\leq  U^{\frac{1}{1+|\gamma|}}\log^{11} U\leq \frac{q_{k+1}}{\log^{30}q_k}$. Therefore, by diophantine assumptions on $\alpha$ and since $x\in G\subset S$ it follows that 
\begin{equation}\label{singfar}
\sup_{0\leq i <N(x,t)}d(x_0+i\a,0)\geq \frac{\log^{10}q_k}{q_k}.
\end{equation}
Therefore and since$\|x_0-y_0\|<\frac{1}{q_k}$,  
for $i=0,...,N(x,t)$,  we have
$$
-i\a\notin [x_0,y_0].
$$
So for some $\theta\in[x_0,y_0]$.
\begin{equation}\label{byh}
|f^{(N(x,t))}(x_0)-f^{(N(x,t))}(y_0)|\geq 
|f'^{(N(x,t))}(x_0)|\|x_0-y_0\|-
|f''^{(N(x,t))}(\theta)|\|x_0-y_0\|^2.
\end{equation}
But $x\in G\subset S$ satisfies \eqref{singfar}, $\theta\in [x_0,y_0]$ and $\|x_0-y_0\|\leq q_k^{-1}$. So by \eqref{koks5} and diophantine assumptions on $\alpha$ 
$$
|f''^{(N(x,t))}(\theta)|\leq N(x,t)^{2+|\gamma|}\log^3N(x,t)\leq t^{2+|\gamma|}\log^4t.
$$
Since $x\in S$, we have  $N(x,t)\geq \frac{t}{2}$. Therefore and since $x\in W_t$, we get  
$$|f'^{(N(x,t))}(x_0)|\geq \frac{N(x,t)^{1+|\gamma|}}{\log^{P_\gamma}N(x,t)}\geq \frac{t^{1+|\gamma|}}{3\log^{P_\gamma}t}.
$$
Since $t\leq U^{\frac{1}{1+|\gamma|}}\log^{10} U$, we get 
$$
|f''^{(N(x,t))}(\theta)|\|x_0-y_0\|^2\leq \|x_0-y_0\| \frac{t^{2+|\gamma|}\log^4t}{U}\leq \frac{1}{10}|f'^{(N(x,t))}(x_0)|\|x_0-y_0\|.
$$
Therefore, in \eqref{byh}, for $t\geq U^{\frac{1}{1+|\gamma|}}\log^{4} U$ we have 
$$
|f^{N(x,t)}(x_0)-f^{N(x,t)}(y_0)|\geq 
\frac{1}{2}|f'^{(N(x,t))}(y_0)||x_0-y_0|\geq \frac{t^{1+|\gamma|}}{18\log^3t}U^{-1}\geq 10.$$
This finishes the proof.
\end{proof}

\section{Appendix A.}
The proofs follow along similar lines as the proofs in \cite{KRV}.
Define $V_n=\{x\in\T^f\;:\; f'^{(n)}(x_0)\geq \frac{n^{2-\gamma}}{\log^{P_\gamma} n}\}$ (compare with the definition of $W_t$, \eqref{wn}).
Proposition \ref{mpr} will follow from the following results:

\begin{pr}\label{prob} We have 
$$
\lim_{|N|\to +\infty }\left|\frac{\log^{10\gamma^{-1}}|N|}{|N|}
\sum_{i=0}^{N-1}\chi_{V^c_i}(x)\right|=0
$$
\end{pr}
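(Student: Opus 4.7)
To prove Proposition~\ref{prob}, I would reduce the pointwise statement to an $L^1$ bound on $\mu(V_n^c)$ and then upgrade to the almost-sure statement via Markov's inequality and Borel--Cantelli along a dyadic subsequence. The overall plan is: (a) obtain a uniform-in-$n$ decay estimate $\mu(V_n^c)\le C(\log n)^{-Q}$ with $Q$ much larger than $10\gamma^{-1}$; (b) integrate this bound to control $\sum_{i<N}\mu(V_i^c)$; (c) apply Markov and Borel--Cantelli at $N_k=2^k$; (d) transfer to arbitrary $N$ by monotonicity of the partial sums.

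For step (a), fix $n$ with $q_s\le n<q_{s+1}$ and let $(x_0)^n_{\min}$ denote the minimum distance from $0$ of the orbit $\{x_0+j\alpha:0\le j<n\}$. The estimate \eqref{koks4} of Lemma~\ref{koksi2} bounds $|f'^{(n)}(x_0)|$ in terms of $|f'((x_0)^n_{\min})|$ up to an error $8q_{s+1}^{1+|\gamma|}$, and under $\alpha\in\mathcal{D}$ one has $q_{s+1}\le Cn\log^3 n$. Combined with the local scaling $|f'(t)|\sim |t|^{-(1+\gamma)}$ near the singularity, the event $V_n^c$ forces $(x_0)^n_{\min}\ge c(\log n)^{-A}/n$ for some $A=A(\gamma,P_\gamma)$. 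The set $\{x_0:(x_0)^n_{\min}>\delta\}$ is $\T\setminus\bigcup_{j<n}B(-j\alpha,\delta)$, and the three-distance theorem (together with the Diophantine control $q_s\ge n/\log^3 n$) gives that the arcs of $\{-j\alpha\}_{j<n}$ come in at most three lengths, each $\le 1/q_s$. A Denjoy--Koksma count of the ``long'' arcs of length $>2\delta$ then produces $\mu(V_n^c)\le C(\log n)^{-Q}$, with $Q$ that can be made arbitrarily large by taking $P_\gamma>100\gamma^{-1}$ as in \eqref{wn}.

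Step (b) is immediate by Fubini: $\int\sum_{i<N}\chi_{V_i^c}\,d\mu=\sum_{i<N}\mu(V_i^c)\le CN(\log N)^{-Q+1}$. Markov's inequality gives
$$\mu\left(\sum_{i<N}\chi_{V_i^c}(x)>\frac{N}{(\log N)^{10\gamma^{-1}+1}}\right)\le \frac{C}{(\log N)^{Q-2-10\gamma^{-1}}}.$$
Evaluating along $N_k=2^k$, the right-hand side is summable in $k$ provided $Q-2-10\gamma^{-1}>1$, which is guaranteed by the above choice of $P_\gamma$. Borel--Cantelli then yields a set $W'\subset\T^f$ of full measure on which, for all large $k$, $\sum_{i<N_k}\chi_{V_i^c}(x)\le N_k(\log N_k)^{-10\gamma^{-1}-1}$. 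For $N\in[N_k,N_{k+1})$, monotonicity of the partial sums in $N$ combined with $N_{k+1}\le 2N$ yields $(\log N)^{10\gamma^{-1}}N^{-1}\sum_{i<N}\chi_{V_i^c}(x)=O(1/\log N)$, which completes the proof.

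The genuinely delicate point is step (a). The error term $8q_{s+1}^{1+|\gamma|}$ in \eqref{koks4} can dwarf the target threshold $n^{1+\gamma}/(\log n)^{P_\gamma}$ when $n$ sits well inside a long gap $[q_s,q_{s+1})$, so a naive application of Lemma~\ref{koksi2} only yields $(x_0)^n_{\min}\gtrsim 1/(n\log^3 n)$ for $x\in V_n^c$, which by the three-distance argument gives only a trivial measure bound. The clean route, essentially the one carried out in the proof of the analogous Proposition in \cite{KRV}, is to decompose $n=\sum_i b_iq_i$ by Ostrowski and apply Lemma~\ref{koksi2} at each scale $q_i$, exploiting that for typical $x_0$ the dominant contribution to $|f'^{(n)}(x_0)|$ always comes from the single closest return and the remaining blocks cannot conspire to cancel it. The polylogarithmic slack afforded by $\alpha\in\mathcal{D}$ is exactly what is needed to absorb all the cocycle errors and deliver the bound $\mu(V_n^c)\le C(\log n)^{-Q}$ with $Q$ as large as one wishes.
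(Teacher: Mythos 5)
Your overall architecture — reduce to a measure bound $\mu(V_n^c)\le C(\log n)^{-Q}$ with $Q$ large, then upgrade to an a.e.\ statement by Markov plus Borel--Cantelli along a subsequence and interpolate — matches the paper's. The second half is fine and in fact slightly simpler than the paper's: where you use Markov on the $L^1$ bound with dyadic $N_k=2^k$ and monotonicity of the partial sums, the paper bounds the $L^2$ norm of $\frac{\log^{10\gamma^{-1}}M}{M}\sum_{i<M}\chi_{V_i^c}$ and invokes Lyons's $L^2$ Borel--Cantelli lemma along a much denser subsequence $N_{k+1}-N_k<N_k/\log^{15\gamma^{-1}}N_k$, paying for the weaker interpolation estimate with the denser gaps. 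For the purely qualitative conclusion your dyadic version is adequate.

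The real gap is in your step (a), the analogue of the paper's Lemma~\ref{smd}. You correctly observe that the lower bound $|f'^{(n)}(x_0)|\ge f'((x_0)^n_{\min})-8q_{s+1}^{1+|\gamma|}$ from \eqref{koks4} only forces $(x_0)^n_{\min}\gtrsim (\log n)^{-A}/n$ on $V_n^c$, and that the set of $x_0$ with this property has measure close to $1$, not close to $0$ — your own three-distance count confirms this is a dead end. You then gesture at an Ostrowski decomposition in which ``the dominant contribution to $|f'^{(n)}(x_0)|$ always comes from the single closest return and the remaining blocks cannot conspire to cancel it,'' but this is precisely the obstruction, not its resolution: away from the closest return, $f'^{(n)}$ does undergo cancellation (the derivative changes sign across the singularity), and no version of this route is in the paper. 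The paper's actual argument for Lemma~\ref{smd} is different and much cleaner. It partitions $\T$ by the orbit points $\{-i\alpha\}_{i=0}^{n-1}$ into $n$ atoms $I=(a,b]$; on each atom $f'^{(n)}$ is continuous and blows up to $-\infty$ at $a^+$ and to $+\infty$ at $b^-$, hence has a unique zero $x_I$. The mean value theorem gives $|f'^{(n)}(x)|=|f''^{(n)}(\theta)||x-x_I|$, and the second-derivative bound \eqref{koks5} from Lemma~\ref{koksi2} (together with $\alpha\in\mathcal{D}$) gives $|f''^{(n)}(\theta)|\gtrsim n^{2+\gamma}/\log^4 n$ uniformly. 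Consequently $V_n^c\cap I$ is confined to an interval of length $\lesssim 1/(n\log^{80\gamma^{-1}}n)$ around $x_I$, and summing over the $n$ atoms gives $\mu(V_n^c)\lesssim\log^{-50\gamma^{-1}}n$. This ``critical point on each atom'' idea is the missing ingredient; without it, your step (a), and hence the whole proof, is incomplete.
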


\begin{lm}\label{con} Fix $x\in \T^f$ and $M\in \Z$. If   $|f_M(y)-M|<2M^{1-\gamma}\log^{6}M$ and 

\begin{equation}\label{ase}\left|\{i\in[0,M]\cap \Z\;:\; x\notin V_i\}\right|<\frac{|M|}{\log^{10\gamma^{-1}}M},
\end{equation}
then 
\begin{equation}\label{eq:con}
\left|\{t\in[0,M]\;:\; x\notin W_{N(x,t)}\}\right|<\frac{|M|}{\log^3M}.
\end{equation}
\end{lm}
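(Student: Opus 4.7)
The plan is to convert the Lebesgue measure on the flow time $t$ into a counting sum over the base return index $i = N(x,t)$, and then combine the hypothesis \eqref{ase} (which controls the count of bad indices) with an upper bound on the value of $f$ along those bad indices. Concretely, for each fixed $x \in \T^f$ and each $i \in \Z$ the set $\{t : N(x,t) = i\}$ is an interval of Lebesgue length $f(T^i x_0)$, so
\[
\bigl|\{t \in [0,f^{(M)}(x_0)] : x \notin W_{N(x,t)}\}\bigr| \;=\; \sum_{i \in B} f(T^i x_0),
\]
where $B \subset [0,M)\cap\Z$ consists of the indices for which $|f'^{(i)}(x_0)|$ falls below the $W$-threshold. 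By the compatibility of the definitions of $V_n$ and $W_t$, one has $B \subset \{i \in [0,M] : x \notin V_i\}$, and the hypothesis $|f^{(M)}(x_0) - M| < 2 M^{1-\gamma}\log^6 M$ lets me replace the upper limit $f^{(M)}(x_0)$ with $M$ up to a negligible error $O(M^{1-\gamma}\log^6 M) \ll M/\log^3 M$.

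The next step is to split $B = B_{\mathrm{far}} \cup B_{\mathrm{near}}$ according to a threshold $r_0 = \log^{-C} M$ for a constant $C$ to be chosen. For $i \in B_{\mathrm{far}}$, i.e.\ those indices with $d(T^i x_0, 0) \geq r_0$, the bound $f(x) \lesssim x^{-\gamma}$ near the singularity yields $f(T^i x_0) \leq O(\log^{C\gamma} M)$, so
\[
\sum_{i \in B_{\mathrm{far}}} f(T^i x_0) \;\leq\; |B| \cdot O(\log^{C\gamma} M) \;\leq\; \frac{M \cdot O(\log^{C\gamma} M)}{\log^{10\gamma^{-1}} M} \;\leq\; \frac{M}{2\log^3 M},
\]
provided $C$ is chosen with $C\gamma \leq 10\gamma^{-1} - 4$, which is harmless since $\gamma \in (0,1)$. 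This already exhausts the contribution from indices that avoid the singular neighbourhood.

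The main obstacle is handling $B_{\mathrm{near}}$, the indices with $d(T^i x_0,0) < r_0$, where a single value of $f(T^i x_0)$ may be as large as $M^\gamma$. Here the crucial observation is that a spike of $f$ at position $i$ corresponds to an even sharper spike of $f'$: if $d(T^i x_0, 0) < r$, then $|f'(T^i x_0)| \gtrsim r^{-(1+\gamma)}$, which by far exceeds the $V$-threshold at index $i$. Since $f'^{(i+1)}(x_0) - f'^{(i)}(x_0) = f'(T^i x_0)$, at least one of $i, i+1$ must lie outside $V^c$; hence $B_{\mathrm{near}}$ is essentially limited to indices adjacent to the orbit's closest approaches to $0$. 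Counting these approaches via the three-distance theorem under $\alpha \in \mathcal{D}$ and using Lemma~\ref{koksi2} to control the total spike mass $\log^3 M \cdot f(x^M_{\min}) \leq M^\gamma \log^{O(1)} M$, one sees that only $O(\log^{O(1)} M)$ spikes can survive in $B$, contributing at most $O(M^\gamma \log^{O(1)} M) = o(M/\log^3 M)$. Combining the two estimates yields \eqref{eq:con}. The delicate part, and the one where the diophantine condition on $\alpha$ is genuinely used, is precisely this spike-counting argument for $B_{\mathrm{near}}$.
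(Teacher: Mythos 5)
Your opening reduction — turning the Lebesgue measure of $\{t : x \notin W_{N(x,t)}\}$ into the sum $\sum_{i \in B} f(T^i x_0)$ over the set $B$ of bad indices, and absorbing $|f^{(M)}(x_0)-M|$ as a negligible error — is sound and is the right setting. The estimate for $B_{\mathrm{far}}$ (bad indices whose orbit points avoid a $\log^{-C}M$-neighbourhood of the singularity) is also correct as stated.

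The gap is in the treatment of $B_{\mathrm{near}}$. The spike observation, that a very small $d(T^i x_0,0)$ forces one of $|f'^{(i)}(x_0)|,|f'^{(i+1)}(x_0)|$ to exceed the $V$-threshold, only tells you that \emph{one} of $i,i+1$ is good. The bad member of the pair can perfectly well be $i$ itself, and then $f(T^i x_0)$ — which can be as large as $M^\gamma$ — still contributes to $\sum_{i\in B_{\mathrm{near}}} f(T^i x_0)$. So the spike argument does not thin out $B_{\mathrm{near}}$ in a way that controls the sum. Moreover the asserted bound $\sum_{i\in B_{\mathrm{near}}} f(T^i x_0) = O(M^\gamma \log^{O(1)}M)$ is quantitatively false: the orbit of $x_0$ makes roughly $M r_0$ visits to $[-r_0,r_0]$, and equidistribution (Denjoy--Koksma applied to the truncation $f\,\chi_{d\geq r_0}$) forces $\sum_{d(T^i x_0,0)<r_0} f(T^i x_0) \gtrsim M r_0^{1-\gamma}$ up to logarithmic factors, which is of order $M\log^{-C(1-\gamma)}M$, not $M^\gamma$. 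If one then tries to choose the threshold $C$ to make both pieces small, the constraint from $B_{\mathrm{near}}$ is $C(1-\gamma)>3+O(1)$ while the constraint from $B_{\mathrm{far}}$ is $C\gamma < 10\gamma^{-1}-3-O(1)$; these are incompatible once $\gamma$ is close to $1$ (roughly $\gamma \gtrsim 0.77$). So the decomposition by proximity to the singularity cannot close the argument for the whole range $\gamma\in(0,1)$.

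The paper sidesteps the need to bound $f$ on bad indices altogether. It partitions $[0,M]\cap\Z$ into consecutive blocks of length $q_k+1$ with $q_k$ the denominator nearest $\log^{4\gamma^{-1}}M$; a block is ``good'' if it contains no bad index, and by \eqref{ase} at most $M/\log^{10\gamma^{-1}}M$ blocks are bad. The point is a \emph{lower} bound: every good block corresponds to a flow-time interval on which $x\in W_{N(x,t)}$ throughout, and its length is at least $f^{(q_k)}(\cdot)\geq q_k - 4q_k^{1-\gamma}$ \emph{for every base point}, by the first inequality in \eqref{koks3}. Hence
$$
\lambda\bigl(\{t\in[0,f^{(M)}(x_0)]:x\notin W_{N(x,t)}\}\bigr)
\;\leq\; f^{(M)}(x_0)\;-\;\Bigl(\bigl[\tfrac{f^{(M)}(x_0)}{q_k}\bigr]-\tfrac{M}{\log^{10\gamma^{-1}}M}\Bigr)\bigl(q_k-4q_k^{1-\gamma}\bigr),
$$
and the right side is $\ll M/\log^3 M$ using $f^{(M)}(x_0)\leq M+2M^{1-\gamma}\log^6 M$ and the choice of $q_k$. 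This subtraction trick — bound the good time from below and subtract it from the bounded total — is exactly what your argument is missing and what makes the conclusion uniform in $\gamma$.
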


\begin{proof}[Proof of Proposition \ref{mpr}]
Fix $\delta>0$. By Proposition \ref{prob} and Egorov theorem,  there exist set $U_\delta\in \T^f$, $\mu(U_\delta)\gg 1-\delta$ and $N_0\in \N$ such that for every $x\in U_\delta$ and $|M|\geq N_0$ \eqref{ase} is satisfied.  Define $W:=U_\delta\cap S$ (see \eqref{gse}). Take $|N|$ sufficiently large and let $s$ be unique such that $q_s\leq N<q_{s+1}$
 Since $x\in S$, by \eqref{koks3} in Lemma \ref{koksi2}
$$
|f^{(N)}(y)-N|< 2N^{1-\gamma}\log^6{N}.
$$
So for any $x\in W$ and $|N|$ sufficiently large, the assumptions of Lemma \ref{con} are satisfied. Therefore \eqref{eq:con} holds for $x$ and $N$ and so also \eqref{eq:m} holds. The proof of Proposition \ref{mpr} is thus finished.  
\end{proof}

\begin{proof}[Proof of Proposition \ref{prob}]

The proof goes along the same lines as the proof of Proposition in \cite{KRV} (which in turn uses some ideas of the proof of Theorem 1. in \cite{Lyo})

\begin{lm}\label{lio}\cite{Lyo} If $Y_n$ are random variables such that $\sum_{n\geq 1} \|Y_n\|_2^2<\infty$, then $Y_n\to 0$ a.s.
\end{lm}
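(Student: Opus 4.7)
The plan is a standard one-line application of Tonelli's theorem combined with the elementary fact that the general term of a convergent series of nonnegative reals must tend to zero. The point is to convert the quantitative $L^2$ summability hypothesis into qualitative almost sure summability of the pathwise series $\sum_n Y_n(\omega)^2$, from which almost sure convergence $Y_n \to 0$ is immediate.

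First, since each $Y_n^2$ is a nonnegative measurable function, Tonelli's theorem applies without any additional integrability hypothesis and yields
\[
\mathbb{E}\Bigl[\sum_{n\geq 1} Y_n^2\Bigr] \;=\; \sum_{n\geq 1} \mathbb{E}\bigl[Y_n^2\bigr] \;=\; \sum_{n\geq 1} \|Y_n\|_2^2 \;<\; \infty,
\]
the last inequality being the assumption. In particular the nonnegative random variable $\sum_{n\geq 1} Y_n^2$ has finite expectation, so it is finite almost surely.

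Next, on the almost sure event $\bigl\{\omega : \sum_{n\geq 1} Y_n(\omega)^2 < \infty\bigr\}$, the terms of a convergent series of nonnegative numbers must tend to zero, hence $Y_n(\omega)^2 \to 0$ and therefore $Y_n(\omega) \to 0$. This gives $Y_n \to 0$ almost surely, which is the desired conclusion.

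There is essentially no obstacle in this proof; the lemma is a classical textbook fact. The only mildly careful point is to invoke Tonelli (valid for nonnegative measurable functions with no integrability hypothesis on the partial sums) rather than Fubini, and to observe that the almost sure finiteness of $\sum_n Y_n^2$ is strictly what is needed---neither $L^2$ convergence nor uniform bounds on the $Y_n$ play any role.
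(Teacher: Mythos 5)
Your proof is correct and complete: Tonelli applied to the nonnegative variables $Y_n^2$ gives $\mathbb{E}\bigl[\sum_n Y_n^2\bigr]=\sum_n\|Y_n\|_2^2<\infty$, hence $\sum_n Y_n^2<\infty$ almost surely, and a convergent nonnegative series has terms tending to zero. The paper itself supplies no proof for this lemma --- it is stated as a citation to \cite{Lyo} --- so there is no argument of the paper's to compare against; your one-line Tonelli argument is the standard and simplest route (an equivalent alternative is Chebyshev plus Borel--Cantelli with $\varepsilon=1/k$, which some texts use, but it buys nothing here). One small stylistic note: you correctly emphasize Tonelli over Fubini, but it is worth also noting explicitly that the final step uses $|Y_n|=\sqrt{Y_n^2}$ and the continuity of $\sqrt{\cdot}$ at $0$; this is trivial but makes the deduction $Y_n^2\to 0 \Rightarrow Y_n\to 0$ self-contained.
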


Let us denote $X_i:=\chi_{V^c_i}(x)$. Notice that by Lemma \ref{smd} we have   
$$
\left\|\frac{\log^{10\gamma^{-1}}|M|}
{|M|}\sum_{i=0}^{M-1}X_i\right\|_2^2\leq \log^{-30\gamma^{-1}}M.
$$
Therefore there exists a sequence $(N_k)$, $|N_{k+1}-N_k|<\frac{|N_k|}{\log^{15\gamma^{-1}} |N_k|}$ such that
$$
\sum_{k\geq 1}\left(\left\|\frac{\log^{10\gamma^{-1}}|N_k|}{|N_k|}\sum_{i=0}^{N_k-1}
X_i\right\|_2\right)^2< +\infty.
$$
By Lemma \ref{lio} we get $\frac{\log^{10\gamma^{-1}}|N_k|}{|N_k|}\sum_{i=0}^{N_k-1}X_i\to 0$ a.s. Let $k\in \N$ be unique such that $N_k\leq M<N_{k+1}$. Then
\begin{multline*}
\left|\frac{\log^{10\gamma^{-1}}|M|}{|M|}\sum_{i=0}^{M-1}X_i\right|\leq 
\left|\frac{\log^{10\gamma^{-1}}|N_k|}{|N_k|}\sum_{i=0}^{N_k-1}X_i\right|+\\
\max_{0\leq s\leq N_{k+1}-N_k}\left|\frac{\log^{10\gamma^{-1}}|N_k|}{|N_k|}
\sum_{i=N_k+1}^{N_k+s}X_i\right|.
\end{multline*}
This finishes the proof since $|N_{k+1}-N_k|<\frac{|N_k|}{\log^{15\gamma^{-1}}|N_k|}$.
\end{proof}

\begin{lm}\label{smd}
There exists a constant $C>0$ such that for every $n\in \Z$ 
$$
\mu(V^c_n)<\frac{C}{\log^{50\gamma^{-1}}n}.
$$
\end{lm}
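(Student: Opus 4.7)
The plan is to reduce the measure estimate on $V_n^c$ to an orbit-avoidance estimate near the singularity of $f$ at $0$. The engine is Lemma \ref{koksi2}, specifically \eqref{koks4}, which for $|n| \in [q_s,q_{s+1})$ gives the two-sided estimate
\[
f'(x_0^n_{min}) - 8q_{s+1}^{1+\gamma} \leq |f'^{(n)}(x_0)| \leq f'(x_0^n_{min}) + 8q_{s+1}^{1+\gamma}.
\]
Since $\alpha \in \mathcal{D}$ gives $q_{s+1} \leq Cq_s(\log q_s)(\log s)^2$ and hence $q_{s+1}^{1+\gamma} \leq Cn^{1+\gamma}\log^{3(1+\gamma)}n$, this relates $|f'^{(n)}(x_0)|$ to $f'(x_0^n_{min})$ up to a controlled diophantine error.

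First I would use the lower half of \eqref{koks4} to localise $V_n^c$. Combining the defining condition $|f'^{(n)}(x_0)| < n^{1+\gamma}/\log^{P_\gamma}n$ (with $V_n$ read via $|f'^{(n)}|$ as in the analogous $W_t$ of \eqref{wn}) with \eqref{koks4} yields an upper bound $f'(x_0^n_{min}) \leq n^{1+\gamma}/\log^{P_\gamma}n + 8q_{s+1}^{1+\gamma}$; inverting the asymptotic $f'(y) \sim |y|^{-1-\gamma}$ near $0$ turns this into a lower bound $x_0^n_{min} > \delta_n$, with target scale $\delta_n \asymp \log^{P_\gamma/(1+\gamma)}n\,/\,n$. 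Since $P_\gamma > 100\gamma^{-1}$, this $\delta_n$ is much larger than the typical inter-orbit gap $\asymp 1/n$, so the inclusion $V_n^c \subset \{x_0^n_{min} > \delta_n\}$ places $V_n^c$ inside a genuinely atypical avoidance event.

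The second step is to bound $\mu\{x_0^n_{min} > \delta_n\}$ via Denjoy-Koksma applied to $\phi = \chi_{[-\delta_n,\delta_n]}$. Writing $n$ in Ostrowski expansion $n = \sum b_j q_j$ and using the $\mathcal{D}$-bound $b_j \leq C\log^2 q_j$ on partial quotients, the bounded variation $V(\phi) = 2$ yields $|N(x) - 2n\delta_n| \leq C\log^4 n$ uniformly in $x$, where $N(x) = \sum_{i=0}^{n-1}\phi(x_0+i\alpha)$. Since $E[N] = 2n\delta_n \asymp \log^{P_\gamma/(1+\gamma)} n$ dominates the deviation by a large log factor, $N(x) \geq 1$ outside a set of measure at most $C/\log^{P_\gamma/(1+\gamma)}n$. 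The choice $P_\gamma > 100\gamma^{-1}$ forces $P_\gamma/(1+\gamma) > 50\gamma^{-1}$ (using $\gamma < 1$), which gives the required bound $\mu(V_n^c) < C/\log^{50\gamma^{-1}}n$.

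The main obstacle is keeping the diophantine error in \eqref{koks4} subordinate to the $V_n$-threshold when computing $\delta_n$. A naive reading of \eqref{koks4} gives $f'(x_0^n_{min}) \leq Cn^{1+\gamma}\log^{3(1+\gamma)}n$ from the error alone, yielding only $\delta_n \asymp 1/(n\log^3 n)$, which is too small for the Denjoy-Koksma step to succeed. Closing this gap presumably requires exploiting cancellation between the contributions of orbit points on opposite sides of the singularity (the symmetric Kochergin case has $A_1 = B_1$), in the spirit of \cite{KRV}, so that $8q_{s+1}^{1+\gamma}$ is replaced by a refined error dominated by $n^{1+\gamma}/\log^{P_\gamma}n$ on the bulk of $\T^f$, and the sharper scale $\delta_n \asymp \log^{P_\gamma/(1+\gamma)}n/n$ prevails.
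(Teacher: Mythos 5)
Your first step—the inclusion $V_n^c \subset \{x_0^n_{min} > \delta_n\}$ with $\delta_n \asymp \log^{P_\gamma/(1+\gamma)}n/n$—is false, and this is a structural problem, not just a lost log factor. The points $\{-i\alpha\}_{i=0}^{n-1}$ have consecutive spacing $\asymp 1/n$, so $\{x_0^n_{min} > \delta_n\}$ has measure tending to zero as soon as $\delta_n \gg 1/n$; meanwhile $V_n^c$ always has positive measure because $f'^{(n)}$ actually vanishes somewhere inside every interval $I$ of the orbit partition, and near any such zero $|f'^{(n)}|$ is small regardless of how close $I$'s endpoints lie to the singularity. In other words, the smallness of $|f'^{(n)}|$ on $V_n^c$ comes from \emph{sign cancellation between the terms $f'(x_0+i\alpha)$ of the Birkhoff sum}, not from the orbit staying uniformly far from $0$. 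That is precisely why the one-sided lower bound in \eqref{koks4} degenerates: the error $8q_{s+1}^{1+|\gamma|}$ is genuinely comparable to the main term, and you correctly observe this yourself in your last paragraph, but the proposed fix (sharpening the Denjoy--Koksma error by cancellation in the spirit of KRV) does not address the source of the problem and is not what the paper does.

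The paper's argument sidesteps $f'^{(n)}$ as a whole and exploits the \emph{second} derivative instead. On each interval $I=(a,b]$ of the partition $\mathcal{I}$ of $\T$ determined by $\{-i\alpha\}_{i=0}^{n-1}$, the function $f'^{(n)}$ is $C^1$, tends to $-\infty$ at $a^+$ and to $+\infty$ at $b^-$, hence has a zero $x_I$. Since $f''>0$ everywhere, the Birkhoff sum $f''^{(n)}$ involves \emph{no cancellation}, so Denjoy--Koksma (as in \eqref{koks5}) gives the uniform pointwise lower bound $f''^{(n)}(\theta)\gtrsim n^{2+\gamma}\log^{-c}n$ under the diophantine hypothesis $\alpha\in\mathcal{D}$. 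The mean value theorem then gives $|f'^{(n)}(x)|=|f''^{(n)}(\theta)|\,|x-x_I|\gtrsim n^{2+\gamma}\log^{-c}n\,|x-x_I|$, so $V_n^c\cap I$ lies in an interval of length $\lesssim n^{-1}\log^{-(P_\gamma-c)}n$ around $x_I$. Summing over the $n$ intervals of $\mathcal{I}$ yields $\mu(V_n^c)\lesssim\log^{-(P_\gamma-c)}n$, and $P_\gamma>100\gamma^{-1}$ absorbs the constant $c$ and gives the stated exponent $50\gamma^{-1}$. The moral is that the lemma hinges on the positivity of $f''$—which eliminates cancellation in the second-derivative Birkhoff sum and so gives a two-sided lower bound \emph{everywhere}—rather than on any refinement of the first-derivative Denjoy--Koksma estimate.
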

\begin{proof} Let us conduct the proof in the case $n>0$ the case $n<0$ is analogous. Let $s\in \N$ be unique satisfying $q_s\leq n<q_{n+1}$. Let $I=(a,b]$ be any interval in the partition $\mathcal{I}$ of  $\T$ given by $\{-i\alpha\}_{i=0}^{n-1}$. It follows by \eqref{asu},\eqref{asu2}, \eqref{asu3}, that $f^{(n)}$ is $C^2$ on $I$. Moreover $\lim_{x\to b^-}f'^{(n)}(x)=+\infty$ and $\lim_{x\to a^+}f'^{(n)}(x)=-\infty$. Hence there exists $x_I\in I$ such that $f'^{(n)}(x_I)=0$. Then  for $x\in I$ there exists $\theta\in I$ such that
\begin{equation}\label{spek}
|f'^{(n)}(x)|=|f'^{(n)}(x)-f'^{(n)}(x_I)|=
|f''^{(n)}(\theta)||x-x_I|.
\end{equation}
Moreover, by Lemma \ref{koksi2} $|f''^{(n)}(\theta)|\geq q_s^{3-\eta}\geq \frac{n^{3-\eta}}{\log^4n}$ (the last inequality by diophantine assumptions on $\a$). Let $I_{bad}:=[-\frac{1}{n\log^{80\gamma^{-1}}n}+x_I,x_I+
\frac{1}{n\log^{80\gamma^{-1}}n}]
$.
Then by \eqref{spek}
$$
 (V^c_n\cap I)\subset I_{bad}.
$$

So
$$ 
V^c_n\subset \bigcup_{I\in \mathcal{I}}I_{bad}^f;
$$
and therefore  
$\mu(V^c_n)\leq \frac{C}{\log^{50\gamma^{-1}}n}$. This gives Lemma \ref{smd}.
\end{proof}

\begin{proof}[Proof of Lemma \ref{con}]
Since $|f^{(M)}(x)-M|\leq 2M^{1-\gamma}\log^{6}M$ it is enough to show that for 
$$R_M:=\{t\in [0,f^{(M)}(x)]\;:\;x\notin W_{N(x,t)}\}
$$ 
we have 
$$
\lambda(R_M)<\frac{M}{2\log^3M} 
$$
Divide the interval $[0,f^{(M)}(x)]$ into intervals of length $q_k+1$, where $q_k$ is the denominator closest to $\log^{4\gamma^{-1}}M$ (notice that the endpoints of the intervals are integers).  

Denote these intervals by $[N_j,N_{j+1}]$, $j=0,..., \left[\frac{f^{(M)}(x)}{q_k}\right]-1$. Notice that by \eqref{ase}, for  at least  $\left[\frac{f^{(M)}(x)}{q_k}\right]-\frac{M}{\log^{10\gamma^{-1}}M}$ of $j$'s we have
\begin{equation}\label{coh}
[N_j,N_{j+1}]\cap \{i\in[0,M]\;:\; x\notin W_i\}=\emptyset
\end{equation}
For any such $j$ let $T_j$ be such that $N(x,T_j)=N_j$. Then by \eqref{coh}, 
$$
[T_j,T_{j+1}]\cap R_M=\emptyset.
$$
By definition $N_{j+1}-N_j=q_k+1$. Therefore and by \eqref{koks3}, we have 
\begin{multline*}|T_{j+1}-T_j|\geq f^{(N(x,T_{j+1}))}(x)-f^{(N(x,T_j+1))}(x)=\\
f^{(N_{j+1}-N_j-1)}(x+(N_j+1)\alpha)=f^{(q_k)}(x+(N_j+1)\alpha)\geq  q_k-4q_k^{1-\gamma}.
\end{multline*}
So, finally
$$
\lambda(R_M)\leq f^{(M)}(x)- \left(\left[\frac{f^{(M)}(x)}{q_k}\right]-\frac{M}{\log^{10\gamma^{-1}}M}\right)
(q_k-4q_k^{1-\gamma})\leq
\frac{M}{2\log^3M},
$$
by the choice of $q_k$. This finishes the proof.
\end{proof}

\bibliographystyle{plain}
\bibliography{slow}

\end{document}